\newcommand*\bigcdot{\mathpalette\bigcdot@{.5}}
\newcommand*\bigcdot@[2]{\mathbin{\vcenter{\hbox{\scalebox{#2}{$\m@th#1\bullet$}}}}}
\theoremstyle{plain}
\newtheorem*{Example*}{\bfseries{\emph{Example}}}
\newtheorem*{Notation*}{\bfseries{\emph{Notation}}}
\newtheorem*{Theorem*}{\bfseries{\emph{Theorem}}}
\newtheorem*{Lemma*}{\bfseries{\emph{Lemma}}}
\newtheorem*{Proposition*}{\bfseries{\emph{Proposition}}}
\newtheorem*{Corollary*}{\bfseries{\emph{Corollary}}}
\newtheorem*{Remark*}{\bfseries{\emph{Remark}}}
\newtheorem*{Remarks*}{\bfseries{\emph{Remarks}}}
\newtheorem*{Def*}{\bfseries{\emph{Definition}}}
\newtheorem*{Conjecture*}{\bfseries{\emph{Conjecture}}}
\newtheorem*{sketch proof*}{Sketch proof}
\newtheorem{Theorem}{\bfseries{\emph{Theorem}}}[section]
\newtheorem*{Examples*}{\bfseries{\emph{Examples}}}
\newtheorem{Lemma}[Theorem]{\bfseries{\emph{Lemma}}}
\newtheorem{Proposition}[Theorem]{\bfseries{\emph{Proposition}}}
\newtheorem{Corollary}[Theorem]{\bfseries{\emph{Corollary}}}
\newcommand{\fil}{\text{Fil}}
\newcommand{\logmw}{\text{log-MW}}
\newcommand{\rig}{\text{rig}}
\newcommand{\logcris}{\text{log-cris}}
\newcommand{\logrig}{\text{log-rig}}
\newcommand{\spec}{\text{Spec }}
\newcommand{\specno}{\text{Spec}}
\newcommand{\spf}{\text{Spf }}
\newcommand{\spwf}{\text{Spwf }}
 \title{Overconvergent de Rham-Witt cohomology for semistable varieties}
 \date{January 24, 2019}
  \author{Oliver Gregory\footnote{The first named author is supported by the ERC Consolidator Grant 681838 “K3CRYSTAL”.} \ and Andreas Langer}
\begin{document}
\maketitle
\begin{abstract}
We define an overconvergent version of the Hyodo-Kato complex for semistable varieties $Y$ over perfect fields of positive characteristic, and prove that its hypercohomology tensored with $\mathbb{Q}$ recovers the log-rigid cohomology when $Y$ is quasi-projective. We then describe the monodromy operator using the overconvergent Hyodo-Kato complex. Finally, we show that overconvergent Hyodo-Kato cohomology agrees with log-crystalline cohomology in the projective semistable case.
\end{abstract}

\section{Introduction}
For a proper and smooth variety $Y$ over a perfect field $k$ of characteristic $p>0$, the hypercohomology of the Deligne-Illusie de Rham-Witt complex $W\Omega_{Y/k}^{\bullet}$ tensored with $\mathbb{Q}$ computes - using the comparison isomorphism with crystalline cohomology \cite{Ber97} - the rigid cohomology
\begin{equation*}
H^{\ast}_{\rig}(Y/W(k)[1/p])\cong\mathbb{H}^{\ast}(Y,W\Omega_{Y/k}^{\bullet})\otimes\mathbb{Q}
\end{equation*}
However, rigid cohomology is well-defined without any properness assumption on $Y$. In \cite{DLZ11}, Davis-Langer-Zink define an overconvergent de Rham-Witt complex $W^{\dagger}\Omega_{Y/k}^{\bullet}$, which is a subcomplex of $W\Omega_{Y/k}^{\bullet}$, and show that 
\begin{equation*}
H^{\ast}_{\rig}(Y/W(k)[1/p])\cong\mathbb{H}^{\ast}(Y,W^{\dagger}\Omega_{Y/k}^{\bullet})\otimes\mathbb{Q}
\end{equation*}
for $Y$ quasi-projective and smooth over $k$. 

On the other hand, one could instead relax the smoothness condition on $Y$. Let $S_{0}=(\spec k,\mathbb{N})$ be the standard log point, and let $Y$ be a fine $S_{0}$-log scheme. Let $\mathfrak{S}_{0}$ be the (weak) formal log scheme $(\spf W,\mathbb{N}\rightarrow W,1\mapsto 0)$. Then Grosse-Kl\"{o}nne \cite{Gro05} defines the log-rigid cohomology $H_{\logrig}^{\ast}(Y/\mathfrak{S}_{0})$ of $Y$ (we recall the definition in \S\ref{log-rigid section}). Grosse-Kl\"{o}nne shows that the log-rigid cohomology of $Y$ agrees with Shiho's log-convergent cohomology of $Y$ whenever $Y$ is a semistable variety whose irreducible components are proper. In particular, by the comparison between log-convergent and log-crystalline cohomology \cite{Shi02}, there is an isomorphism 
\begin{equation*}
H_{\logrig}^{\ast}(Y/\mathfrak{S}_{0})\cong\mathbb{H}^{\ast}(Y,W\omega_{Y/k}^{\bullet})\otimes\mathbb{Q}
\end{equation*}
for proper semistable varieties $Y$ over $k$, where $W\omega_{Y/k}^{\bullet}$ is the Hyodo-Kato complex \cite{HK94}. 

There is, however, currently no Hyodo-Kato style theory available for non-proper semistable varieties. We shall define a complex $W^{\dagger}\omega_{Y/k}^{\bullet}$, functorial in $Y$, which computes log-rigid cohomology in non-proper situations. More precisely, we prove
\begin{Theorem}\label{intro theorem}
Let $Y$ be a quasi-projective semistable variety over $S_{0}$. Then there is a quasi-isomorphism \begin{equation*}
R\Gamma_{\emph{\logrig}}(Y/\mathfrak{S}_{0})\cong\mathbb{R}\Gamma(Y, W^{\dagger}\omega_{Y/k}^{\bullet}\otimes\mathbb{Q})
\end{equation*}
\end{Theorem}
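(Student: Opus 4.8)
The plan is to reduce the global statement to a local, affine one and then glue. First I would establish the \emph{affine local comparison}: for a small affine semistable $Y=\spec A$ admitting a suitable chart, one constructs the weak formal lift $\spwf$ of the coordinate ring (a weakly complete finitely generated $W(k)$-algebra with log structure) and shows that the overconvergent Hyodo-Kato complex $W^{\dagger}\omega^{\bullet}_{A/k}$ computes the same thing as the log-de Rham complex of this weak formal lift tensored with $\mathbb{Q}$. The model for this is the Davis--Langer--Zink comparison in the smooth case, where $W^{\dagger}\Omega^{\bullet}_{A/k}$ is identified (after $\otimes\mathbb{Q}$) with the de Rham complex of a weakly complete lift; here one must carry the log structure through the Davis--Langer--Zink machinery, checking that the comparison map respects the monodromy/log-differential $d\log$ of the chart and is compatible with Frobenius. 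Concretely, one builds a map from the log-de Rham complex of the weak lift to $W^{\dagger}\omega^{\bullet}_{A/k}\otimes\mathbb{Q}$ (via the comparison for the smooth part plus the Kato--Hyodo description of $W\omega^{\bullet}$ as a Koszul-type extension by $d\log t$), and proves it is a quasi-isomorphism by a filtration argument: both sides carry the monodromy filtration (the ``$N$-filtration'' by powers of $d\log t$) whose graded pieces are, up to shifts and twists, overconvergent de Rham--Witt complexes of the strata, where the smooth comparison of Davis--Langer--Zink applies directly.

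Next I would globalize. Choosing an affine open cover $\{U_i\}$ of $Y$ by such small affines, each $U_i$ has a weak formal semistable lift $\mathfrak{U}_i$, and on overlaps the lifts differ by the usual (log-)infinitesimal data, so one gets a simplicial weak formal log scheme computing log-rigid cohomology on the one hand (this is essentially Grosse-Klönne's construction of $H^*_{\logrig}(Y/\mathfrak{S}_0)$ via dagger spaces / weak formal schemes, recalled in \S\ref{log-rigid section}) and, via the local comparison of the previous paragraph together with Zariski descent for the (pre)sheaf $W^{\dagger}\omega^{\bullet}_{Y/k}\otimes\mathbb{Q}$, a complex computing $\mathbb{R}\Gamma(Y,W^{\dagger}\omega^{\bullet}_{Y/k}\otimes\mathbb{Q})$ on the other. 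One then checks that the local quasi-isomorphisms are compatible on overlaps (up to the chain homotopies coming from different choices of lift, exactly as in the smooth DLZ argument) so that they assemble to a quasi-isomorphism of the two Čech-type double complexes, whence of the two hypercohomologies. The quasi-projectivity hypothesis enters precisely to guarantee the existence of a finite affine cover with the required charts and weak formal lifts, mirroring its role in \cite{DLZ11}.

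The main obstacle, I expect, is the local comparison in the presence of the log structure — specifically, showing that the overconvergence (weak completeness) condition interacts correctly with the $d\log t$-direction. In the smooth case the key technical input of Davis--Langer--Zink is that the overconvergent de Rham--Witt complex of an affine is, after inverting $p$, independent of the chosen weakly complete lift and agrees with its de Rham complex; for the semistable case one needs the analogous independence for log-de Rham complexes of weak formal semistable lifts, and the compatibility of the comparison with the boundary maps that mix the log-direction with the ordinary differentials. I would handle this by working with the explicit Hyodo--Kato presentation $W\omega^{\bullet}_{Y/k} = W\Omega^{\bullet}_{\widetilde{Y}/k}\langle d\log t\rangle / (\text{relations})$ where $\widetilde{Y}$ is a (formally) smooth ambient space, defining $W^{\dagger}\omega^{\bullet}$ by the corresponding dagger subcomplex, and then reducing the quasi-isomorphism claim, via the monodromy filtration, to the already-established smooth overconvergent comparison on the strata $Y^{(j)}$ — the only remaining point being a convergence/boundedness estimate showing that dagger (overconvergence) conditions are preserved under the residue maps of that filtration, which should follow from the same growth-condition bookkeeping as in \cite{DLZ11}.
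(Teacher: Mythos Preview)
Your plan is essentially the paper's strategy: an affine comparison via a filtration whose graded pieces reduce to the smooth Davis--Langer--Zink theorem on strata, followed by a simplicial/\v{C}ech globalization. Two points, however, deserve more care than your sketch suggests.

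First, the filtration you want is the \emph{weight} filtration $P_{\bullet}$ of Steenbrink--Mokrane (by the number of log-poles), not the monodromy filtration; and it lives most naturally on the larger complex $W^{\dagger}\tilde{\omega}_{Y/k}^{\bullet}$ (before quotienting by $\theta$). The Poincar\'e residue identifies $\mathrm{Gr}_{j}^{P}$ with $\bigoplus_{Y_{I}\in\mathcal{M}_{j}} W^{\dagger}\Omega_{Y_{I}/k}^{\bullet}[-j]$, and DLZ on the smooth $Y_{I}$ gives the comparison for $\tilde{\omega}$. To pass from $\tilde{\omega}$ to $\omega$ the paper does \emph{not} simply quotient; it uses the Steenbrink double complex $\mathcal{B}^{\dagger i,j}=W^{\dagger}\tilde{\omega}^{i+j+1}/P_{j}$ (and its Monsky--Washnitzer analogue) together with the map $\omega\mapsto\omega\wedge\theta$ to get $W^{\dagger}\omega_{Y/k}^{\bullet}\otimes\mathbb{Q}\simeq\mathrm{Tot}(\mathcal{B}^{\dagger\bullet,\bullet})\otimes\mathbb{Q}$. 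Your ``filter by powers of $d\log t$'' is the right intuition, but the actual argument needs this extra layer.

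Second, the globalization is not ``exactly as in the smooth DLZ argument''. On overlaps $Y_{J}$ one does not have a single weak formal semistable lift but a product of frames, and the normal crossings divisor $D_{J}$ one must work with is a sum of terms of two different shapes. The paper applies the strong fibration theorem to identify the relevant dagger tube with a product $Q\times\breve{D}^{n}$ carrying a divisor that decomposes into (a) $]Y_{J}[^{\dagger}\times\breve{D}^{n}$ and (b) $Q\times\breve{D}^{n}(m)$. The genuinely new technical input, with no smooth analogue, is proving $\mathrm{sp}_{\ast}\omega_{M_{J}}^{\bullet}\xrightarrow{\sim}\mathbb{R}\mathrm{sp}_{\ast}\omega_{M_{J}}^{\bullet}$ for these log complexes (Proposition~\ref{crucial quasi-isomorphism}); case~(b) requires a log version of the key acyclicity lemmas of \cite{DLZ11} (their 4.45, 4.47) for $\omega_{\breve{D}^{n}(m)}^{\bullet}$, computing the log-Monsky--Washnitzer cohomology of the local model $k[T_{1},\dots,T_{n}]/(T_{1}\cdots T_{m})$. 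Without this step one cannot promote the sheaf-level comparison maps to morphisms in the derived category, and the \v{C}ech argument does not close. Your outline should flag this as the second main obstacle, not just the local log comparison.
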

We then describe the monodromy operator on log-rigid cohomology in terms of the overconvergent Hyodo-Kato complex, using the method of \cite{Mok93}.

Using the overconvergent Hyodo-Kato complex we can formulate Grosse-Kl\"{o}nne's Hyodo-Kato isomorphism \cite{Gro05} in the non-proper case as follows: For a strictly semistable weak formal scheme $\mathfrak{X}$ over $\spwf W(k)$ with generic fibre $X$, a $K_{0}=W(k)[1/p]$-dagger space, and closed fibre $Y$ which we assume to be quasi-projective, there is a canonical isomorphism between the de Rham cohomology of $X$ and the (rational) overconvergent Hyodo-Kato cohomology of $Y$ (\cite{Gro05}, Corollary 3.5).

In \cite{CDN18}, log-rigid cohomology and overconvergent syntomic cohomology are used to study the $p$-adic cohomology of semistable $p$-adic Stein spaces, notably Drinfeld half-spaces,  within the context of a hoped for $p$-adic local Langlands correspondence. We expect that the overconvergent Hyodo-Kato complex will have interesting applications in this area.

Note that even in the smooth case $X/k$, \cite{DLZ11} only construct a map $R\Gamma_{\rig}(X)\rightarrow\mathbb{R}\Gamma(X,W^{\dagger}\Omega_{X/k}^{\bullet}\otimes\mathbb{Q})$ when $X$ is quasi-projective. We will see in the proof of Theorem \ref{main comparison} (= Theorem \ref{intro theorem}) in Section \ref{main comparison section} why we need to assume quasi-projectivity. In a recent preprint \cite{Law18}, Lawless has been able to remove the quasi-projectivity hypothesis at least in the smooth case. For details we refer to \cite{Law18}. In work in progress, Lawless also intends to remove this hypothesis in the semistable case and hence we expect Theorem \ref{intro theorem} to hold unconditionally as well. 

In the final part of the paper, we compare the usual and overconvergent Hyodo-Kato cohomology in the proper case:

\begin{Theorem}
Let $Y$ be a projective semistable variety over $S_{0}$. Then the canonical map 
\begin{equation*}
\mathbb{H}^{\ast}(Y,W^{\dagger}\omega_{Y/k}^{\bullet})\rightarrow\mathbb{H}^{\ast}(Y,W\omega_{Y/k}^{\bullet})= H_{\emph{\logcris}}^{\ast}((Y,M)/(W(k),W(L)))
\end{equation*}
induced by the inclusion $W^{\dagger}\omega_{Y/k}^{\bullet}\subset W\omega_{Y/k}^{\bullet}$ is an isomorphism of finite type $W(k)$-modules. Here $M$ is the log structure on $Y$ given by $\mathcal{O}_{Y}\cap u_{\ast}\mathcal{O}_{U}^{\times}$ where $u:U\hookrightarrow Y$ is a smooth dense open, and $W(L)$ is the canonical lifting of the log structure $L$ on $\spec k$ given by $1\mapsto 0$ (previously denoted by $S_{0}$). 
\end{Theorem}

It should be noted that in fact we give two definitions of the overconvergent Hyodo-Kato complex in this paper. The first is constructed in the style of \cite{HK94}, and the second in the more modern approach of \cite{Mat17}. We prove that the two complexes are the same. Along the way, this has the serendipitous consequence that we show that Matsuue's log de Rham-Witt complex $W\Lambda_{Y/(R,\mathbb{N})}^{\bullet}$ (see \S\ref{Matsuue} for the notation) gives the Hyodo-Kato complex $W\omega_{Y/k}^{\bullet}$ in the special case that $R=k$, thus filling a gap in the literature.  

We assume that the reader is familiar with the de Rham-Witt complex of Deligne-Illusie and its basic properties, including its explicit description for (Laurent)-polynomial algebras \cite{LZ04}, \'{e}tale base change results, and the overconvergent version in the smooth case proven in \cite{DLZ11}.
\\
\\
\noindent {\bf{Acknowledgements.}} The first named author expresses his thanks to Elmar Grosse-Kl\"{o}nne for a very helpful conversation. Both authors thank the referee for helpful comments and improvements to the paper.

\section{Log-rigid cohomology}\label{log-rigid section}

Let $k$ be a field of characteristic $p>0$, let $W=W(k)$ be the Witt vectors of $k$ and $K:=\text{Frac }W$. Set $W_{n}=W/p^{n}$ for $n\in\mathbb{N}$. We will write $\mathfrak{S}_{0}$ for the (weak) formal log scheme $(\spf W,\mathbb{N}\rightarrow W,1\mapsto 0)$. The special fibre of $\mathfrak{S}_{0}$ is the standard log point $S_{0}=(\spec k,\mathbb{N}\rightarrow k,1\mapsto 0)$.

We briefy recall Grosse-Kl\"{o}nne's definition of the log-rigid cohomology of a fine $S_{0}$-log scheme $Y$. For the details, one should consult \cite{Gro05}. Let $Y=\bigcup_{i\in I}V_{i}$ be an open covering, and suppose there are exact closed immersions $V_{i}\hookrightarrow\mathfrak{P}_{i}$ into log smooth weak formal $\mathfrak{S}_{0}$-log schemes for each $i$. For each $H\subset I$, choose an exactification of the diagonal embedding:
\begin{equation*}
V_{H}:=\bigcap_{i\in H}V_{i}\stackrel{\iota}{\hookrightarrow}\mathfrak{P}_{H}\xrightarrow{f}\prod_{i\in H}\mathfrak{P}_{i}
\end{equation*}
(this means that $\iota$ is an exact closed immersion and $f$ is log-\'{e}tale). Then the log de Rham complex $\omega_{\mathfrak{P}_{H}/\mathfrak{S}_{0}}^{\bullet}$ tensored with $\mathbb{Q}$ induces a complex of sheaves $\omega_{\mathfrak{P}_{H,K}}^{\bullet}$ on the $K$-dagger space $\mathfrak{P}_{H,K}$ associated to the generic fibre of $\mathfrak{P}_{H}$ (see \cite{Gro00}). Let $\text{sp}:\mathfrak{P}_{H,K}\rightarrow\mathfrak{P}_{H}$ be the specialisation map, and write $]V_{H}[_{\mathfrak{P}_{H}}^{\dagger}:=\text{sp}^{-1}(V_{H})$ for the tubular neighbourhood of $V_{H}$ in $\mathfrak{P}_{H,K}$. Then $]V_{H}[_{\mathfrak{P}_{H}}^{\dagger}$  and $\omega_{]V_{H}[_{\mathfrak{P}_{H}}^{\dagger}}^{\bullet}:=\omega_{\mathfrak{P}_{H,K}}^{\bullet}|_{]V_{H}[_{\mathfrak{P}_{H}}^{\dagger}}$ are independent of the choice of exactification above (\cite{Gro05}, Lemma 1.2). Now, for $H_{1}\subset H_{2}$, the projection $p_{12}:]V_{H_{2}}[_{\mathfrak{P}_{H_{2}}}^{\dagger}\rightarrow ]V_{H_{1}}[_{\mathfrak{P}_{H_{1}}}^{\dagger}$ induces $p_{12}^{-1}\omega_{]V_{H_{1}}[_{\mathfrak{P}_{H_{1}}}^{\dagger}}^{\bullet}\rightarrow\omega_{]V_{H_{2}}[_{\mathfrak{P}_{H_{2}}}^{\dagger}}^{\bullet}$. This defines a complex of simplicial sheaves $\omega_{]V_{H_{\bullet}}[_{\mathfrak{P}_{H_{\bullet}}}^{\dagger}}^{\bullet}$ on a simplicial dagger space $]V_{H_{\bullet}}[_{\mathfrak{P}_{H_{\bullet}}}^{\dagger}$. The log-rigid cohomology of $Y$ is given by
\begin{equation*}
R\Gamma_{\logrig}(Y/\mathfrak{S}_{0}):=\mathbb{R}\Gamma(]V_{H_{\bullet}}[_{\mathfrak{P}_{H_{\bullet}}}^{\dagger},\omega_{]V_{H_{\bullet}}[_{\mathfrak{P}_{H_{\bullet}}}^{\dagger}}^{\bullet})
\end{equation*} 
Then $R\Gamma_{\logrig}(Y/\mathfrak{S}_{0})$ is independent on the choice of open cover $Y=\bigcup_{i\in I}V_{i}$ and the choice of embeddings $V_{i}\hookrightarrow\mathfrak{P}_{i}$ (\cite{Gro05},Lemma 1.4). 

\section{The overconvergent Hyodo-Kato complex}\label{complexes}

Now suppose that $k$ is a perfect field of characteristic $p>0$. We follow closely the approach of \cite{HK94}. Let $X$ be a regular flat $W$-scheme and write
\begin{center}
\begin{tikzpicture}[descr/.style={fill=white,inner sep=1.5pt}]
        \matrix (m) [
            matrix of math nodes,
            row sep=2.5em,
            column sep=2.5em,
            text height=1.5ex, text depth=0.25ex
        ]
        { Y & X & X_{K} \\
          \spec k & \spec W & \spec K\\
        };

        \path[overlay,right hook->, font=\scriptsize]
        (m-1-1) edge node [above]{$i$} (m-1-2)
        (m-2-1) edge (m-2-2);
        
        \path[overlay,left hook->, font=\scriptsize]
        (m-1-3) edge node [above]{$j$} (m-1-2)
        (m-2-3) edge (m-2-2);        
        
        \path[overlay,->, font=\scriptsize]
        (m-1-1) edge (m-2-1)
        (m-1-2) edge (m-2-2)
        (m-1-3) edge (m-2-3);
       \end{tikzpicture} 
\end{center} 
for the special and generic fibres of $X$. We suppose that $X$ has semistable reduction, that is to say we suppose that \'{e}tale locally on $X$, there is a smooth morphism $X\rightarrow\spec W[T_{1},\ldots,T_{n}]/(T_{1}\cdots T_{d}-p)$ for some $n\geq d$. In particular, $X_{K}$ is smooth and $Y$ is a reduced normal crossings divisor on $X$. If we endow $X$ with the log-structure induced by the special fibre and consider $Y$ with the pullback log structure, then $Y$ is a fine log-smooth $S_{0}$-log scheme. \'{E}tale locally on $Y$, the structure morphism factors as
\begin{equation*}
Y\xrightarrow{f}(\spec k[T_{1},\ldots,T_{n}]/(T_{1}\cdots T_{d}),\mathbb{N}^{d},e_{i}\mapsto T_{i})\xrightarrow{\delta}S_{0}
\end{equation*} 
where $f$ is exact and \'{e}tale and $\delta$ is induced by the diagonal. We say that $Y$ is semistable over $S_{0}$.

Since $k$ is a perfect field, we can find a dense open subscheme $u:U\hookrightarrow Y$ which is smooth over $k$. We may therefore consider the pushforward of the overconvergent de Rham-Witt complex $W^{\dagger}\Omega_{U/k}^{\bullet}$ of \cite{DLZ11}. Let
\begin{equation*}
d\log:i^{-1}j_{\ast}(\mathcal{O}_{X_{K}}^{\times})\rightarrow u_{\ast}W\Omega_{U/k}^{1}
\end{equation*} 
be the homomorphism considered in (\cite{HK94}, \S1). The image lies in $u_{\ast}W^{\dagger}\Omega_{U/k}^{1}$ (see \cite{Mat17}, 10.1). Then the Hyodo-Kato complex $W\omega_{Y/k}^{\bullet}$ is the $p$-adic completion of the $W(\mathcal{O}_{Y})$-subalgebra of $u_{\ast}W\Omega_{U/k}^{\bullet}$ generated by $dW(\mathcal{O}_{Y})$ and the image of $d\log$. Let $W^{\dagger}(\mathcal{O}_{Y})$ denote the Zariski sheaf of overconvergent Witt vectors (\cite{DLZ12}, Prop 3.2) on $Y$.

\begin{Def*}\sloppy
$W^{\dagger}\omega_{Y/k}^{\bullet}$ is the differential graded $W^{\dagger}(\mathcal{O}_{Y})$-algebra $W\omega_{Y/k}^{\bullet}\cap u_{\ast}W^{\dagger}\Omega_{U/k}^{\bullet}$.
\end{Def*}

Then $W^{\dagger}\omega_{Y/k}^{\bullet}$ is a subcomplex of the $W^{\dagger}(\mathcal{O}_{Y})$-algebra $u_{\ast}W^{\dagger}\Omega_{U/k}^{\bullet}$ and inherits the operators $F$ and $V$ satisfying the usual de Rham-Witt relations, as in \cite{HK94}.

Now let $u_{\ast}W^{\dagger}\Omega_{U/k}^{\bullet}[\theta]/(\theta^{2})$ be the complex given by adjoining an inderterminate $\theta$ in degree one, subject to $\theta a=(-1)^{q}a\theta$ for all $a\in u_{\ast}W^{\dagger}\Omega_{U/k}^{q}$ and $d\theta=0$. Let
\begin{equation*}
d\log:i^{-1}j_{\ast}(\mathcal{O}_{X_{K}}^{\times})\rightarrow u_{\ast}W^{\dagger}\Omega_{U/k}^{1}[\theta]/(\theta^{2})
\end{equation*} 
be the unique homomorphism which induces on $u^{-1}i^{-1}(\mathcal{O}_{X}^{\times})$ the composite map
\begin{equation*}
u^{-1}i^{-1}(\mathcal{O}_{X}^{\times})\rightarrow\mathcal{O}_{U}^{\times}\xrightarrow{d\log}W\Omega_{U/k}^{1}
\end{equation*} 
and induces on $K^{\times}$ the map $a\mapsto\text{ord}_{K}(a)\theta$ (again, see (\cite{HK94}, \S1). The image of $d\log$ lies by definition inside $W\tilde{\omega}_{Y/k}^{\bullet}$, defined in (\cite{HK94}, 1.4).
\begin{Def*}
$W^{\dagger}\tilde{\omega}_{Y/k}^{\bullet}$ is the $W^{\dagger}(\mathcal{O}_{Y})$-algebra $W\tilde{\omega}_{Y/k}^{\bullet}\cap u_{\ast}W^{\dagger}\Omega_{U/k}^{\bullet}[\theta]/(\theta^{2})$.
\end{Def*}
Then we have a short exact sequence of complexes, induced by (\cite{HK94}, Prop 1.5)
\begin{align}\label{ses DRW}
0\rightarrow W^{\dagger}\omega_{Y/k}^{\bullet}[-
& 1]\rightarrow W^{\dagger}\tilde{\omega}_{Y/k}^{\bullet}\rightarrow W^{\dagger}\omega_{Y/k}^{\bullet}\rightarrow 0 \\
& a\mapsto a\wedge\theta, \ \ \ \theta\mapsto 0 \nonumber
\end{align}

\subsection{An equivalent approach}\label{Matsuue}

In this section we shall outline another definition of the overconvergent Hyodo-Kato complex, this time in the style of \cite{Mat17}, and we will show that the two definitions are the same. This will become particularly useful in \S\ref{projective section}.

\sloppy
In (\cite{Mat17}, \S3.4), Matsuue defines the log de Rham-Witt complex $W\Lambda_{(S,Q)/(R,P)}^{\bullet}$ for any morphism of pre-log rings $(R,P)\rightarrow(S,Q)$, where $R$ is a $\mathbb{Z}_{(p)}$-algebra, as the initial object in the category of log F-V-procomplexes. The construction is a logarithmic generalisation of the construction given in (\cite{LZ04}, \S1.3).

Fix integers $n\geq d$ and let $(B:=k[T_{1},\ldots,T_{n}],\mathbb{N}^{d},e_{i}\mapsto T_{i})$ be considered as a pre-log ring over $(k,\{\ast\})$, where $\{\ast\}$ denotes the trivial monoid. Then one has, in particular, the log de Rham-Witt complex $W\Lambda_{(B,\mathbb{N}^{d})/(k,\{\ast\})}^{\bullet}$ as a special case. Any element of $W\Lambda_{(B,\mathbb{N}^{d})/(k,\{\ast\})}^{\bullet}$ can be written as a convergent sum of basic log Witt differentials (\cite{Mat17}, Prop. 4.3). Matsuue then defines a subcomplex $W^{\dagger}\Lambda_{(B,\mathbb{N}^{d})/(k,\{\ast\})}^{\bullet}$ as those elements of $W\Lambda_{(B,\mathbb{N}^{d})/(k,\{\ast\})}^{\bullet}$ which are overconvergent (\cite{Mat17}, \S10.1).

Now consider a pre-log ring $(A,M,\alpha)$ over $(k,\{\ast\})$ such that $A$ is a finitely generated $k$-algebra. Then we may choose a surjective morphism of pre-log rings over $(k,\mathbb{\{\ast\}})$:-
\begin{center}
\begin{tikzpicture}[descr/.style={fill=white,inner sep=1.5pt}]
        \matrix (m) [
            matrix of math nodes,
            row sep=2.5em,
            column sep=2.5em,
            text height=1.5ex, text depth=0.25ex
        ]
        { \mathbb{N}^{d} & B=k[T_{1},\ldots,T_{n}]  \\
          M & A \\
        };

        \path[overlay,->, font=\scriptsize]
        (m-1-1) edge (m-1-2)
        (m-2-1) edge node[above]{$\alpha$} (m-2-2);       
           
      \path[overlay, ->>, font=\scriptsize]
      (m-1-1) edge (m-2-1)
      (m-1-2) edge (m-2-2);
       \end{tikzpicture} 
\end{center} 
where the top morphism is $e_{i}\mapsto T_{i}$. This morphism of pre-log rings induces a morphism of log de Rham-Witt complexes 
\begin{equation*}
\lambda:W\Lambda_{(B,\mathbb{N}^{d})/{(k,\{\ast\})}}^{\bullet}\rightarrow W\Lambda_{(A,M)/{(k,\{\ast\})}}^{\bullet}
\end{equation*}
Matsuue defines $W^{\dagger}\Lambda_{(A,M)/{(k,\{\ast\})}}^{\bullet}:=\lambda\left(W^{\dagger}\Lambda_{(B,\mathbb{N}^{d})/{(k,\{\ast\})}}^{\bullet}\right)$. Notice that one could have taken any log polynomial algebra over $k$ which surjects onto $(A,M)$, but Matsuue shows that $W^{\dagger}\Lambda_{(A,M)/{(k,\{\ast\})}}^{\bullet}$ is independent of this choice (see (\cite{Mat17}, 10.2) and the subsequent discussion). By construction, $W^{\dagger}\Lambda_{(A,\{\ast\})/(k,\{\ast\})}^{\bullet}$ is the overconvergent de Rham-Witt complex $W^{\dagger}\Omega_{A/k}^{\bullet}$ of \cite{DLZ11}. In (\cite{DLZ11}, Cor 1.7) it is shown that this construction glues to give a complex of Zariski sheaves $W^{\dagger}\Omega_{X/k}^{\bullet}$ on any variety $X$. In (\cite{Mat17}, \S10.3) this is generalised to show that $W^{\dagger}\Lambda^{\bullet}_{(A,M)/(k,\{\ast\})}$ glues to give a complex of Zariski sheaves $W^{\dagger}\Lambda_{(X,M)/(k,\{\ast\})}^{\bullet}$ where $(X,M)$ denotes the log scheme associated to the complement of a strict normal crossing divisor. We give a similar argument in the semistable case.

Let $\spec A$ be a semistable affine $k$-scheme and let $(A,M,\alpha)$ be the associated pre-log ring. Define $P_{j}W\Lambda_{(A,M)/(k,\{\ast\})}^{r}$ to be the image of the map
\begin{equation*}
W\Lambda_{(A,M)/(k,\{\ast\})}^{j}\otimes W\Omega_{A/k}^{r-j}\rightarrow W\Lambda_{(A,M)/(k,\{\ast\})}^{r}
\end{equation*}  
This gives a filtration $P_{\bullet}W\Lambda_{(A,M)/(k,\{\ast\})}^{\bullet}$ of the complex $W\Lambda_{(A,M)/(k,\{\ast\})}^{\bullet}$. Let $\{\spec A_{i}\}_{i\in I}$ be the irreducible components of $\spec A$. For subsets $J\subset I$ let $\bigcap_{i\in J}\spec A_{i}=\spec A_{J}$. Then the Poincar\'{e} residue maps give 
\begin{equation*}
\mathrm{Gr}_{j}W\Lambda_{(A,M)/(k,\{\ast\})}^{\bullet}\simeq\bigoplus_{\stackrel{J\subset I}{|J|=j}}W\Omega_{A_{J}/k}^{\bullet}[-j]
\end{equation*} 
Similarly, define $P_{j}W^{\dagger}\Lambda_{(A,M)/(k,\{\ast\})}^{r}$ to be the image of the map
\begin{equation*}
W^{\dagger}\Lambda_{(A,M)/(k,\{\ast\})}^{j}\otimes W^{\dagger}\Omega_{A/k}^{r-j}\rightarrow W^{\dagger}\Lambda_{(A,M)/(k,\{\ast\})}^{r}
\end{equation*}  
to get a filtration $P_{\bullet}W^{\dagger}\Lambda_{(A,M)/(k,\{\ast\})}^{\bullet}$ of the complex $W^{\dagger}\Lambda_{(A,M)/(k,\{\ast\})}^{\bullet}$. The argument in (\cite{Mat17}, Lemma 10.9) shows that the above residue isomorphism induces
\begin{equation*}
\mathrm{Gr}_{j}W^{\dagger}\Lambda_{(A,M)/(k,\{\ast\})}^{\bullet}\simeq\bigoplus_{\stackrel{J\subset I}{|J|=j}}W^{\dagger}\Omega_{A_{J}/k}^{\bullet}[-j]
\end{equation*} 
\begin{Proposition}
Let $\spec A$ be a semistable affine $k$-scheme and let $(A,M,\alpha)$ be the associated pre-log ring. Then the presheaf canonically determined by 
\begin{equation*}
D(f)\mapsto W^{\dagger}\Lambda_{(A_{f},M)/(k,\{\ast\})}^{r}
\end{equation*}
on the basis of distinguished opens is a Zariski sheaf on $\spec A$. (Given $f\in A$, we consider the localisation $A_{f}$ as a pre-log ring via the composition $M\xrightarrow{\alpha}A\rightarrow A_{f}$.) 
\end{Proposition}\label{Cech}
\begin{proof}
The proof is similar to (\cite{DLZ11}, Prop 1.2) and (\cite{Mat17}, Prop 10.12). Let $f_{1},\ldots,f_{l}\in A$ be a generating set for $A$. Write $A_{i_{1}\cdots i_{s}}$ for $A_{f_{i_{1}}\cdots f_{i_{s}}}$. Consider the \v{C}ech complex $C^{\bullet}$ given by
\begin{equation*}
C^{s}=\bigoplus_{1\leq i_{1}<\cdots <i_{s}\leq l}W^{\dagger}\Lambda_{(A_{i_{1}\cdots i_{s}},M)/(k,\{\ast\})}^{r}
\end{equation*}
(so $C^{0}=W^{\dagger}\Lambda_{(A,M)/(k,\{\ast\})}^{r}$). Then it suffices to show that $C^{\bullet}$ is exact. Define
\begin{equation*}
P_{j}C^{s}=\bigoplus_{1\leq i_{1}<\cdots <i_{s}\leq l}P_{j}W^{\dagger}\Lambda_{(A_{i_{1}\cdots i_{s}},M)/(k,\{\ast\})}^{r}
\end{equation*}
This gives a filtration $P_{\bullet}C^{\bullet}$ of the complex $C^{\bullet}$. The graded piece $\mathrm{Gr}_{j}C^{\bullet}$ is
\begin{align*}
\mathrm{Gr}_{j}C^{s}
& \cong\bigoplus_{1\leq i_{1}<\cdots <i_{s}\leq l}\bigoplus_{\stackrel{J\subset I}{|J|=j}}W^{\dagger}\Omega_{(A_{i_{1}\cdots i_{s})_{J}}/k}^{r-j} \\
& \cong\bigoplus_{1\leq i_{1}<\cdots <i_{s}\leq l}\bigoplus_{\stackrel{J\subset I}{|J|=j}}W^{\dagger}\Omega_{(A_{J})_{i_{1}\cdots i_{s}}/k}^{r-j}
\end{align*}
in degree $s$. Here the second direct sum in the first line runs over all $j$-fold intersections of irreducible components of $\spec A_{i_{1}\cdots i_{s}}$. Therefore 
\begin{equation*}
\mathrm{Gr}_{j}C^{\bullet}\cong\bigoplus_{\stackrel{J\subset I}{|J|=j}}\tilde{C}^{\bullet}
\end{equation*}
where 
\begin{equation*}
\tilde{C}^{s}=\bigoplus_{1\leq i_{1}<\cdots <i_{s}\leq l}W^{\dagger}\Omega^{r-j}_{(A_{J})_{i_{1}\cdots i_{s}}/k}
\end{equation*}
is the \v{C}ech complex for $W^{\dagger}\Omega^{r-j}_{A_{J}/k}$. This is exact by (\cite{DLZ11}, Prop 1.6). Induction using the short exact sequence of complexes
\begin{equation*}
0\rightarrow P_{j-1}C^{\bullet}\rightarrow P_{j}C^{\bullet}\rightarrow\mathrm{Gr}_{j}C^{\bullet}\rightarrow 0
\end{equation*}
shows that $P_{j}C^{\bullet}$ is exact for all $j$, and hence $C^{\bullet}$ is exact.
\end{proof}

One may therefore glue to define a complex of Zariski sheaves $W^{\dagger}\Lambda_{Y/{(k,\{\ast\})}}^{\bullet}$ for semistable schemes $Y$ over $S_{0}$ (recall from \S\ref{log-rigid section} that $S_{0}$ is the standard log point $(\spec k, \mathbb{N},1\mapsto 0)$). Finally, we define the overconvergent Hyodo-Kato complex $W^{\dagger}\Lambda_{Y/S_{0}}^{\bullet}$ to be the image of $W^{\dagger}\Lambda_{Y/{(k,\{\ast\})}}^{\bullet}$ under the projection
\begin{equation*}
W\Lambda_{Y/{(k,\{\ast\})}}^{\bullet}\rightarrow W\Lambda_{Y/(k,\mathbb{N})}^{\bullet}=W\Lambda_{Y/S_{0}}^{\bullet}
\end{equation*}
of log de Rham-Witt complexes. This is again a complex of Zariski sheaves.
\begin{Proposition}\label{old-modern}
The overconvergent Hyodo-Kato complex $W^{\dagger}\omega_{Y/k}^{\bullet}$ of the previous section is the same as $W^{\dagger}\Lambda_{Y/S_{0}}^{\bullet}$.
\end{Proposition}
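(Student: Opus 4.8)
The plan is to show that both complexes are subcomplexes of the same ambient complex and that they are generated by the same elements, so they must coincide. First I would reduce to the local situation: both constructions are defined by gluing over an étale cover of $Y$ on which the structure morphism factors through $(\spec k[T_1,\ldots,T_n]/(T_1\cdots T_d),\mathbb{N}^d)$, so it suffices to prove the identification étale-locally and then check compatibility with the gluing data. Étale-locally, the key point is to produce a canonical identification of the two ambient objects: on the one hand $u_*W^\dagger\Omega^\bullet_{U/k}$ (the pushforward of the Davis–Langer–Zink complex of the smooth locus), and on the other hand the image of $W^\dagger\Lambda^\bullet_{(B,\mathbb{N}^d)/(k,\{\ast\})}$ in $W\Lambda^\bullet_{(A,\mathbb{N}^d)/(k,\{\ast\})}$. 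For the latter, I would use Matsuue's presentation of elements as convergent sums of basic log Witt differentials together with the universal property of $W\Lambda$ as the initial log $F$-$V$-procomplex; restricting the log de Rham–Witt complex of $(A,\mathbb{N}^d)$ to the smooth locus $U$ (where the log structure becomes trivial away from the singular locus, so $W\Lambda^\bullet_{U/k}=W\Omega^\bullet_{U/k}$) gives a map to $u_*W\Omega^\bullet_{U/k}$, and I would check it is injective (this is the analogue of the classical fact that the Hyodo–Kato complex injects into $u_*W\Omega^\bullet_{U/k}$, which one can see from the basic Witt differential description since a log Witt differential supported away from a dense open must vanish).

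Second, having placed both $W^\dagger\omega^\bullet_{Y/k}$ and $W^\dagger\Lambda^\bullet_{Y/S_0}$ inside $u_*W^\dagger\Omega^\bullet_{U/k}$, I would compare generators. By definition $W^\dagger\omega^\bullet_{Y/k}$ is the weakly completed $W^\dagger(\mathcal{O}_Y)$-subalgebra generated by $dW^\dagger(\mathcal{O}_Y)$ and $d\log(i^{-1}j_*\mathcal{O}_{X_K}^\times)$. On the other side, $W^\dagger\Lambda^\bullet_{(A,\mathbb{N}^d)/(k,\{\ast\})}$ is generated as an algebra by $W(\mathcal{O}_Y)$ (more precisely by Teichmüller lifts and Verschiebungen of elements of $A$), by $d$ of these, and by the $d\log$ of the log structure, namely $d\log[T_i]$ for $i=1,\ldots,d$; and the overconvergent version is cut out by the overconvergence condition, which by \cite{DLZ12} matches the condition defining $W^\dagger(\mathcal{O}_Y)$ and the weak completion in the first definition. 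The remaining task is to match $d\log(i^{-1}j_*\mathcal{O}_{X_K}^\times)$ with the span of the $d\log[T_i]$ together with $d\log$ of units: locally $i^{-1}j_*\mathcal{O}_{X_K}^\times$ is generated by $i^{-1}\mathcal{O}_X^\times$ and the functions $T_1,\ldots,T_d$ (whose product is $p$), and $d\log T_i$ is precisely $d\log[T_i] + d\log(\text{1-unit})$, so the two subalgebras have the same generators over the same base ring and hence are equal. I would also need to verify that the passage from the $(k,\{\ast\})$-base to the $(k,\mathbb{N})$-base (i.e. killing $d\log$ of the base monoid, $\theta$ in the notation of \S\ref{complexes}) is compatible on both sides, which is immediate from the short exact sequence \eqref{ses DRW} and the corresponding one for $W\Lambda$.

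Finally I would check that these local identifications glue: both constructions are functorial for étale morphisms of semistable $S_0$-schemes, and the independence of $W^\dagger\Lambda^\bullet_{(A,\mathbb{N}^d)/(k,\{\ast\})}$ on the choice of log polynomial presentation (which Matsuue proves) is the analogue of the independence statements used to define $W^\dagger\omega^\bullet_{Y/k}$, so the isomorphisms are canonical and patch to a global isomorphism $W^\dagger\omega^\bullet_{Y/k}\cong W^\dagger\Lambda^\bullet_{Y/S_0}$. I expect the main obstacle to be the first step: carefully setting up the injection of Matsuue's log de Rham–Witt complex into $u_*W\Omega^\bullet_{U/k}$ and verifying that under this injection the overconvergence condition of \cite{Mat17} corresponds exactly to intersecting with $u_*W^\dagger\Omega^\bullet_{U/k}$ (rather than just mapping into it). This is essentially a bookkeeping argument about the Gauss norms / growth conditions on basic log Witt differentials versus basic Witt differentials on the smooth locus, but it requires comparing two a priori different definitions of overconvergence and is the crux of why the proposition is not merely formal — and incidentally it is this comparison that, specialised suitably, yields the claimed identification of Matsuue's complex with the Hyodo–Kato complex when $R=k$.
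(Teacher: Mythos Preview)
Your approach is genuinely different from the paper's. The paper reduces immediately to showing that the \emph{non-overconvergent} complexes $W\Lambda_{Y/S_{0}}^{\bullet}$ and $W\omega_{Y/k}^{\bullet}$ coincide; once that is established, the overconvergent subcomplexes are cut out by compatible conditions in the local model and agree. For the non-overconvergent identification the paper does not embed anything into $u_{\ast}W\Omega_{U/k}^{\bullet}$ or match generators. Instead it uses the universal property of $W\Lambda$ as the initial log $F$-$V$-procomplex to produce a map $W\Lambda_{Y/S_{0}}^{\bullet}\to W\omega_{Y/k}^{\bullet}$, observes that at level one both sides are the ordinary log de Rham complex $\omega_{Y/k}^{\bullet}$, and then climbs by induction on $m$ through the short exact sequences
\[
0\longrightarrow \fil^{m}W_{m+1}(-)^{\bullet}\longrightarrow W_{m+1}(-)^{\bullet}\longrightarrow W_{m}(-)^{\bullet}\longrightarrow 0
\]
(valid for both theories, with $\fil^{m}=V^{m}(-)+dV^{m}(-)$ built from the level-one complex). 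This is short, avoids any direct analysis of overconvergence, and has the bonus of closing the gap in the literature identifying Matsuue's complex with the Hyodo--Kato complex over $(k,\mathbb{N})$.

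Your generator-matching route, by contrast, hits a real obstacle precisely at the step you flag as the crux. Matsuue's $W^{\dagger}\Lambda_{(A,\mathbb{N}^{d})/(k,\{\ast\})}^{\bullet}$ is \emph{not} defined as a weakly completed subalgebra generated by $W^{\dagger}(\mathcal{O}_{Y})$, its differentials, and the $d\log[T_{i}]$: it is defined as the image of those elements of $W\Lambda_{(B,\mathbb{N}^{d})/(k,\{\ast\})}^{\bullet}$ whose expansion into basic log Witt differentials satisfies an explicit Gauss-norm growth bound. So ``same generators over the same base ring'' does not yield equality; you would still have to prove that overconvergent sums of basic log Witt differentials coincide with the weakly completed subalgebra on those generators, and that statement is essentially as hard as the proposition itself. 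The paper's filtration argument sidesteps this entirely by working at each finite truncation $W_{m}$, where there is no completion and no growth condition, and then deducing the overconvergent statement from the identification of the ambient complexes.
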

\begin{proof}
We first show that Matsuue's log de Rham-Witt complex $W\Lambda_{Y/S_{0}}^{\bullet}$ agrees with the Hyodo-Kato complex $W\omega_{Y/k}^{\bullet}$ of \cite{HK94}. This must be well-known to the experts, but the authors do not know of a proof recorded in the literature; it seems important to reconcile the two approaches, so we give a proof here. Since both complexes are complexes of Zariski sheaves it suffices to construct a canonical isomomorphism, functorial in $Y$, if $Y$ is affine.

Given a log $F$-$V$ procomplex $\{E_{m}^{\bullet}\}_{m\in\mathbb{N}}$, define differential graded ideals
\begin{equation*}
\fil^{s}E_{m}^{i}:=V^{s}E_{m-s}^{i}+dV^{s}E_{m-s}^{i-1}\subset E_{m}^{i}
\end{equation*} 
Then this gives a filtration of log $F$-$V$-procomplexes which is compatible with $F$, $V$, $d$ and the projections (\cite{Mat17}, \S3.5).

Now, $W_{\bullet}\Lambda_{Y/S_{0}}^{\bullet}=\left\{W_{m}\Lambda_{Y/S_{0}}^{\bullet}\right\}_{m\in \mathbb{N}}$ and $W_{\bullet}\omega_{Y/k}^{\bullet}=\left\{W_{m}\omega_{Y/k}^{\bullet}\right\}_{m\in \mathbb{N}}$ are log F-V-procomplexes, so we have a map of log F-V-procomplexes, evidently functorial in $Y$,  
\begin{equation*}
W_{\bullet}\Lambda_{Y/S_{0}}^{\bullet}\rightarrow W_{\bullet}\omega_{Y/k}^{\bullet}
\end{equation*}
by the universal property of $W\Lambda_{Y/S_{0}}^{\bullet}$. This map induces diagrams
\begin{equation*}
\begin{tikzpicture}[descr/.style={fill=white,inner sep=1.5pt}]
        \matrix (m) [
            matrix of math nodes,
            row sep=2.5em,
            column sep=2.5em,
            text height=1.5ex, text depth=0.25ex
        ]
        {
        0 & \fil^{m}W_{m+1}\Lambda_{Y/S_{0}}^{\bullet} & W_{m+1}\Lambda_{Y/S_{0}}^{\bullet} & W_{m}\Lambda_{Y/S_{0}}^{\bullet} & 0 \\
        0 & \fil^{m}W_{m+1}\omega_{Y/k}^{\bullet} & W_{m+1}\omega_{Y/k}^{\bullet} & W_{m}\omega_{Y/k}^{\bullet} & 0  \\
        };

        \path[overlay,->, font=\scriptsize]
       (m-1-1) edge (m-1-2)
       (m-1-2) edge (m-1-3)
       (m-1-3) edge (m-1-4)
       (m-1-4) edge (m-1-5)
       (m-2-1) edge (m-2-2)
       (m-2-2) edge (m-2-3)
       (m-2-3) edge (m-2-4)
       (m-2-4) edge (m-2-5)
       (m-1-2) edge (m-2-2)
       (m-1-3) edge (m-2-3)
       (m-1-4) edge (m-2-4);

       \end{tikzpicture} 
\end{equation*} 
of short exact sequences (see (\cite{Mat17}, Prop. 3.6) for the top row, and (\cite{HK94}, Theorem 4.4) for the bottom row) for each $m\in\mathbb{N}$. Now one notices that 
\begin{equation*}
W_{1}\Lambda_{Y/S_{0}}^{\bullet}=W_{1}\omega_{Y/k}^{\bullet}=\omega_{Y/k}^{\bullet}
\end{equation*}
is the usual logarithmic de Rham complex, by definition. This gives 
\begin{equation*}
\fil^{m}W_{m+1}\Lambda_{Y/S_{0}}^{\bullet}=\fil^{m}W_{m+1}\omega_{Y/k}^{\bullet}
\end{equation*}
and then the diagrams give $W_{m}\Lambda_{Y/S_{0}}^{\bullet}=W_{m}\omega_{Y/k}^{\bullet}$ for all $m\in\mathbb{N}$.

Since $W^{\dagger}\omega_{Y/k}^{\bullet}$ and $W^{\dagger}\Lambda_{Y/S_{0}}^{\bullet}$ are subcomplexes of Zariski sheaves of the completed versions, it suffices to show the claim for $Y$ affine. By a result of Kedlaya (\cite{Ked05}, Theorem 2), $Y$ can be covered by finitely many affines $\spec B_{i}$ such that $B_{i}$ is finite \'{e}tale and free over $A_{i}=k[T_{1},\ldots, T_{d}]/(T_{1}\cdots T_{r})$ for some $r$. Using again a sheaf argument, it suffices to prove the claim for $B$ finite \'{e}tale and free over $A=k[T_{1},\ldots, T_{d}]/(T_{1}\cdots T_{r})$. By \'{e}tale base change (\cite{Mat17}, Prop 3.7) and the fact that $W(B)$ is again finite \'{e}tale and free over $W(A)$, we have $W\Lambda_{B/S_{0}}^{\bullet}=W(B)\otimes_{W(A)}W\Lambda_{A/S_{0}}^{\bullet}$ and likewise $W\omega_{B/k}^{\bullet}=W(B)\otimes_{W(A)}W\omega_{A/k}^{\bullet}$. Note that the isomorphism $W\Lambda_{A/S_{0}}^{\bullet}\cong W\omega_{A/k}^{\bullet}$ is explicitly given by formula \eqref{unique expression} in \S\ref{projective section}.

Now, (\cite{DLZ12}, Cor 2.46) implies that $W^{\dagger}(B)$ is finite \'{e}tale and free as a $W^{\dagger}(A)$-module. The proofs of (\cite{DLZ11}, Prop 1.9) or, alternatively, (\cite{DLZ11}, Prop 3.19) transfer to the Hyodo-Kato complexes and provide \'{e}tale base change in the overconvergent settting:
\begin{align*}
& W^{\dagger}\Lambda_{B/S_{0}}^{\ell}\cong W^{\dagger}\Lambda_{A/S_{0}}^{\ell}\otimes_{W^{\dagger}(A)}W^{\dagger}(B) \\
& W^{\dagger}\omega_{B/k}^{\ell}\cong W^{\dagger}\omega_{A/k}^{\ell}\otimes_{W^{\dagger}(A)}W^{\dagger}(B)
\end{align*}
This finishes the proof, since evidently by definition $W^{\dagger}\Lambda_{A/S_{0}}^{\ell}\cong W^{\dagger}\omega_{A/k}^{\ell}$.
\end{proof}

\section{Comparison with log-Monsky-Washniter cohomology}\label{logmw section}

Let $Y=\spec A$ be a semistable affine scheme over $S_{0}$. Let $Y\hookrightarrow Z=\spec B$ be a closed embedding into a smooth affine $k$-scheme such that $Y$ is a normal crossings divisor on $Z$, in other words $A=B/(f_{1}\cdots f_{r})$ and each $B/(f_{i})$ is smooth. Let $\tilde{B}$ be a smooth $W$-algebra lifting $B$ (there always exists such a $\tilde{B}$ by \cite{Elk73}) and set $\tilde{A}:=\tilde{B}/(\tilde{f}_{1}\cdots\tilde{f}_{r})$ for some liftings $\tilde{f}_{i}\in\tilde{B}$ of the $f_{i}$, such that $\tilde{Y}:=\spec\tilde{A}$ is a normal crossings divisor in $\tilde{Z}=\spec\tilde{B}$. That is, we have a diagram 
\begin{center}
\begin{tikzpicture}[descr/.style={fill=white,inner sep=1.5pt}]
        \matrix (m) [
            matrix of math nodes,
            row sep=2.5em,
            column sep=2.5em,
            text height=1.5ex, text depth=0.25ex
        ]
        {\tilde{Y}=\spec\tilde{A} & \tilde{Z}=\spec\tilde{B}  \\
          Y=\spec A & Z=\spec B \\
        };

        \path[overlay,->, font=\scriptsize]
        (m-1-1) edge (m-2-1)
        (m-1-2) edge (m-2-2);

        \path[overlay,right hook->, font=\scriptsize]
        (m-1-1) edge (m-1-2)
        (m-2-1) edge (m-2-2);

       \end{tikzpicture} 
\end{center} 

We define the complexes $W^{\dagger}\omega_{Y/k}^{\bullet}$ and $W^{\dagger}\tilde{\omega}_{Y/k}^{\bullet}$ as in \S\ref{complexes}. Indeed, $X=\spec\tilde{B}/(\tilde{f}_{1}\cdots\tilde{f}_{r}-p)$ is a regular scheme whose special fibre is $Y$, so the definition applies. 

Now let $\Omega_{\tilde{Z}/W}^{\bullet}(\log\tilde{Y})$ denote the logarithmic de Rham complex of $\tilde{Z}$ with respect to the normal crossings divisor $\tilde{Y}$.  We set 
\begin{equation*}
\tilde{\omega}_{\tilde{Y}}^{\bullet}:=\mathcal{O}_{\tilde{Y}}\otimes_{\mathcal{O}_{\tilde{Z}}}\Omega_{\tilde{Z}/W}^{\bullet}(\log\tilde{Y})
\end{equation*}
and write
\begin{equation*}
\tilde{\omega}_{\tilde{Y}^{\dagger}}^{\bullet}:=\mathcal{O}_{\tilde{Y}^{\dagger}}\otimes_{\mathcal{O}_{\tilde{Z}^{\dagger}}}\Omega_{\tilde{Z}^{\dagger}/W}^{\bullet}(\log\tilde{Y}^{\dagger})
\end{equation*}
for the weak completion. 
\begin{Def*}
The logarithmic Monsky-Washnitzer complex of $Y$ is defined to be
\begin{equation*}
\omega_{\tilde{Y}^{\dagger}}^{\bullet}:=\tilde{\omega}_{\tilde{Y}^{\dagger}}^{\bullet}/(\tilde{\omega}_{\tilde{Y}^{\dagger}}^{\bullet-1}\wedge\theta)
\end{equation*}
where $\theta:=d\log\tilde{f}_{1}+\cdots+d\log\tilde{f}_{r}$. We define
\begin{equation*}
H_{\emph{\logmw}}^{\ast}(Y/K):=\mathbb{H}^{\ast}(Y,\omega_{\tilde{Y}^{\dagger}}^{\bullet}\otimes\mathbb{Q})
\end{equation*}
\end{Def*}
This is the logarithmic Monsky-Washnitzer cohomology, as discussed in (\cite{Gro05}, \S5). It is clear that 
\begin{equation*}
H_{\logmw}^{\ast}(Y/K)\cong H_{\logrig}^{\ast}(Y/\mathfrak{S}_{0})
\end{equation*}
and that there is a short exact sequence of complexes
\begin{align}\label{ses MW}
0\rightarrow \omega_{\tilde{Y}^{\dagger}}^{\bullet}[
& -1]\rightarrow\tilde{\omega}_{\tilde{Y}^{\dagger}}^{\bullet}\rightarrow\omega_{\tilde{Y}^{\dagger}}^{\bullet}\rightarrow 0 \\
& a\mapsto a\wedge\theta,\ \theta\mapsto 0 \nonumber
\end{align}

In this section we shall construct a morphism of short exact sequences from (\ref{ses MW}) to (\ref{ses DRW}). We will then prove that the subsequent vertical arrows become quasi-isomorphisms after tensoring with $\mathbb{Q}$.

Let $\tilde{A}^{\dagger}$ and $\tilde{B}^{\dagger}$ denote the weak completion of $\tilde{A}$ and $\tilde{B}$, respectively. Then we have an induced diagram
\begin{center}
\begin{tikzpicture}[descr/.style={fill=white,inner sep=1.5pt}]
        \matrix (m) [
            matrix of math nodes,
            row sep=2.5em,
            column sep=2.5em,
            text height=1.5ex, text depth=0.25ex
        ]
        { \tilde{B}^{\dagger} & \tilde{A}^{\dagger} \\
            W(B) & W(A)  \\
        };

        \path[overlay,->, font=\scriptsize]
        (m-1-1) edge node [left] {$t_{F}$} (m-2-1)
        (m-1-2) edge node [right] {$t_{F}$} (m-2-2);

        \path[overlay,->>, font=\scriptsize]
        (m-1-1) edge (m-1-2)
        (m-2-1) edge (m-2-2);

       \end{tikzpicture} 
\end{center} 
where the vertical arrows are the Lazard morphisms (\cite{Ill79} 0 1.3.6). Since $B$ is a smooth finitely generated $k$-algebra, $t_{F}:\tilde{B}^{\dagger}\rightarrow W(B)$ has image contained in the overconvergent Witt vectors $W^{\dagger}(B)$ (\cite{DLZ11}, Prop. 3.2). By functoriality of the Lazard morphisms and since $W(B)\twoheadrightarrow W(A)$ sends $W^{\dagger}(B)$ to $W^{\dagger}(A)$, we deduce that we in fact have a diagram
\begin{equation*}
\begin{tikzpicture}[descr/.style={fill=white,inner sep=1.5pt}]
        \matrix (m) [
            matrix of math nodes,
            row sep=2.5em,
            column sep=2.5em,
            text height=1.5ex, text depth=0.25ex
        ]
        { \tilde{B}^{\dagger} & \tilde{A}^{\dagger} \\
            W^{\dagger}(B) & W^{\dagger}(A)  \\
        };

        \path[overlay,->, font=\scriptsize]
        (m-1-1) edge node [left] {$t_{F}$} (m-2-1)
        (m-1-2) edge node [right] {$t_{F}$} (m-2-2);

        \path[overlay,->>, font=\scriptsize]
        (m-1-1) edge (m-1-2)
        (m-2-1) edge (m-2-2);

       \end{tikzpicture} 
\end{equation*} 

Let $u$ and $\tilde{u}$ denote the respective open immersions $U:=Z\backslash Y\hookrightarrow Z$ and $\tilde{U}:=\tilde{Z}\backslash \tilde{Y}\hookrightarrow\tilde{Z}$. Then the map
\begin{align*}
& \tilde{u}_{\ast}\Omega_{\tilde{U}/W}^{\bullet}\rightarrow u_{\ast}W\Omega_{U/k}^{\bullet} \\
& \ \ \ d\log\tilde{f}_{i}\mapsto d\log[f_{i}]
\end{align*}
sends logarithmic differentials along $\tilde{Y}$ to logarithmic differentials along $Y$. (Here $[f_{i}]\in W(\tilde{B})$ denotes the Teichm\"{u}ller lift). In particular, it induces a map
\begin{equation*}
\Omega_{\tilde{Z}/W}^{\bullet}(\log\tilde{Y})\rightarrow W\Omega_{Z/k}^{\bullet}(\log Y)
\end{equation*}
and by the above discussion, this induces a map 
\begin{equation*}
\Omega_{\tilde{Z}^{\dagger}/W}^{\bullet}(\log\tilde{Y}^{\dagger})\rightarrow W^{\dagger}\Omega_{Z/k}^{\bullet}(\log Y)
\end{equation*}
These maps were considered in (\cite{Mat17}, \S10), and become quasi-isomorphisms after tensoring with $\mathbb{Q}$, by (\cite{Mat17}, Lemma 10.9). In any case, this gives a map
\begin{equation}\label{map}
\tilde{\omega}_{\tilde{Y}^{\dagger}}^{\bullet}\rightarrow W^{\dagger}(A)\otimes_{W^{\dagger}(B)}W^{\dagger}\Omega_{Z/k}^{\bullet}(\log Y)
\end{equation}
Notice that the logarithmic differentials $d\log[f_{i}]$ along $Y$ coincide with the logarithmic differentials as defined by Hyodo-Kato as the image of $d\log$ using the regular $W$-scheme $X=\spec\tilde{B}/(\tilde{f}_{1}\cdots\tilde{f}_{r}-p)$. Indeed $\tilde{f}_{i}$ is mapped to $d\log [f_{i}]$ and $p$ is mapped to $\theta=d\log[f_{1}]+\cdots+d\log[f_{r}]$. 

Let $W_{n}\tilde{\omega}_{Y/k}^{i}$ be the sheaves introduced in (\cite{Hyo91}, \S1.6). These are the same as $W_{n}\Lambda_{Y/(k,\{\ast\})}^{i}$. One way to see this is by mimicking the proof of Proposition \ref{old-modern}. Recall that one can express the sheaves $W_{n}\tilde{\omega}_{Y/k}^{i}$ as quotients of the $W_{n}\Omega_{Z/k}^{i}(\log Y)$. Indeed, we may assume that $\tilde{Z}$ is an admissible lifting of $Y$ (see (\cite{Mok93}, \S2.4) for the definition). Set $\tilde{Z}_{n}:=\tilde{Z}\times_{W}W_{n}$ and $\tilde{Y}_{n}:=\tilde{Y}\times_{W}W_{n}$, so that $Z=\tilde{Z}_{1}$ and $Y=\tilde{Y}_{1}$. Then 
\begin{equation}\label{Hyodo-style quotient}
W_{n}\tilde{\omega}_{Y/k}^{i}=W_{n}\Omega_{Z/k}^{i}(\log Y)/W_{n}\Omega_{Z/k}^{i}(-\log Y)
\end{equation}
where we have identified $W_{n}\Omega_{Z/k}^{i}(\log Y)=\mathcal{H}^{i}(\Omega_{\tilde{Z}_{n}/W_{n}}^{\bullet}(\log\tilde{Y}_{n}))$ and $W_{n}\Omega_{Z/k}^{i}(-\log Y)=\mathcal{H}^{i}(\mathcal{J}_{n}\otimes_{\mathcal{O}_{\tilde{Z}_{n}}}\Omega_{\tilde{Z}_{n}/W_{n}}^{\bullet}(\log\tilde{Y}_{n}))$, where $\mathcal{J}_{n}=\ker(\mathcal{O}_{\tilde{Z}_{n}}\rightarrow\mathcal{O}_{\tilde{Y}_{n}})$. The fact that the right-hand side of (\ref{Hyodo-style quotient}) gives the same sheaf $W_{n}\tilde{\omega}_{Y/k}^{i}$ as defined in \cite{HK94} is discussed in (\cite{Mok93}, \S2.4). Passing to the  projective limit gives $W\tilde{\omega}_{Y/k}^{i}$ as a quotient of $W\Omega_{Z/k}^{i}(\log Y)$. We therefore deduce a map of complexes of sheaves on $Y$
\begin{equation*}
W(A)\otimes_{W(B)}W\Omega_{Z/k}^{\bullet}(\log Y)\rightarrow W\tilde{\omega}_{Y/k}^{\bullet}
\end{equation*}
Let $\Sigma$ be the singular locus of $Y$, $U=Y\setminus\Sigma$, $u:U\hookrightarrow Y$ and $i:Y\rightarrow Z$. Then we have canonical maps
\begin{equation*}
W^{\dagger}\Omega_{Z/k}^{\bullet}\rightarrow i_{\ast}W^{\dagger}\Omega_{Y/k}^{\bullet}\rightarrow i_{\ast}u_{\ast}W^{\dagger}\Omega_{U/k}^{\bullet}
\end{equation*}
which extend to a map
\begin{equation*}
W^{\dagger}\Omega_{Z/k}^{\bullet}(\log Y)\rightarrow i_{\ast}u_{\ast}W^{\dagger}\Omega_{U/k}^{\bullet}[\theta]/(\theta^{2})
\end{equation*}
since the $d\log[f_{i}]$ are overconvergent (\cite{Mat17}, 10.1). The image of this map lies -- by the above interpretation of $W\tilde{\omega}_{Y/k}^{\bullet}$ -- in
\begin{equation*}
i_{\ast}W^{\dagger}\tilde{\omega}_{Y/k}^{\bullet}=i_{\ast}W\tilde{\omega}_{Y/k}^{\bullet}\cap i_{\ast}u_{\ast}W^{\dagger}\Omega_{U/k}^{\bullet}[\theta]/(\theta^{2})
\end{equation*}
We get in each degree a $W^{\dagger}(B)$-module map
\begin{equation*}
W^{\dagger}\Omega_{Z/k}^{r}(\log Y)\rightarrow i_{\ast}W^{\dagger}\tilde{\omega}_{Y/k}^{r}
\end{equation*}
Hence we get a canonical map
\begin{equation*}
W^{\dagger}(A)\otimes_{W^{\dagger}(B)}W^{\dagger}\Omega_{Z/k}^{\bullet}(\log Y)\rightarrow W^{\dagger}\tilde{\omega}_{Y/k}^{\bullet}
\end{equation*}
Composing with (\ref{map}) defines a comparison morphism
\begin{equation}\label{comparison tilde}
\tilde{\omega}_{\tilde{Y}^{\dagger}}^{\bullet}\rightarrow W^{\dagger}\tilde{\omega}_{Y/k}^{\bullet}
\end{equation}
which sends $\theta=d\log\tilde{f}_{1}+\cdots+d\log\tilde{f}_{r}$ to $\theta=d\log[f_{1}]+\cdots +d\log[f_{r}]$. Since the ``divide by $\theta$'' projection $W^{\dagger}\tilde{\omega}_{Y/k}^{\bullet}\twoheadrightarrow W^{\dagger}\omega_{Y/k}^{\bullet}$ sends $\theta$ to $0$, we get an induced comparison morphism 
\begin{equation}\label{comparison}
\omega_{\tilde{Y}^{\dagger}}^{\bullet}\rightarrow W^{\dagger}\omega_{Y/k}^{\bullet}
\end{equation}
between the logarithmic Monsky-Washnitzer and overconvergent Hyodo-Kato complexes. Moreover, (\ref{comparison tilde}) and (\ref{comparison}) give a diagram of exact rows
\begin{equation*}
\begin{tikzpicture}[descr/.style={fill=white,inner sep=1.5pt}]
        \matrix (m) [
            matrix of math nodes,
            row sep=2.5em,
            column sep=2.5em,
            text height=1.5ex, text depth=0.25ex
        ]
        {0 & \omega_{\tilde{Y}^{\dagger}}^{\bullet}[-1] & \tilde{\omega}_{\tilde{Y}^{\dagger}}^{\bullet} &  \omega_{\tilde{Y}^{\dagger}}^{\bullet} & 0  \\
          0 & W^{\dagger}\omega_{Y/k}^{\bullet}[-1] & W^{\dagger}\tilde{\omega}_{Y/k}^{\bullet} & W^{\dagger}\omega_{Y/k}^{\bullet} & 0 \\
        };

        \path[overlay,->, font=\scriptsize]
       (m-1-1) edge (m-1-2)
       (m-1-2) edge (m-1-3)
       (m-1-3) edge (m-1-4)
       (m-1-4) edge (m-1-5)
       (m-2-1) edge (m-2-2)
       (m-2-2) edge (m-2-3)
       (m-2-3) edge (m-2-4)
       (m-2-4) edge (m-2-5)
       (m-1-2) edge (m-2-2)
       (m-1-3) edge (m-2-3)
       (m-1-4) edge (m-2-4);

       \end{tikzpicture} 
\end{equation*} 
We will use the weight filtration of Steenbrink to show that the vertical arrows (\ref{comparison tilde}) and (\ref{comparison}) become quasi-isomorphisms after tensoring with $\mathbb{Q}$. 

\begin{Theorem}\label{rational quasi-isomorphisms}
The comparison morphisms (\ref{comparison tilde}) and (\ref{comparison}) induce quasi-isomorphisms 
\begin{equation*}
\tilde{\omega}_{\tilde{Y}^{\dagger}}^{\bullet}\otimes\mathbb{Q}\xrightarrow{\sim} W^{\dagger}\tilde{\omega}_{Y/k}^{\bullet}\otimes\mathbb{Q}
\end{equation*}
and
\begin{equation*}
\omega_{\tilde{Y}^{\dagger}}^{\bullet}\otimes\mathbb{Q}\xrightarrow{\sim} W^{\dagger}\omega_{Y/k}^{\bullet}\otimes\mathbb{Q}
\end{equation*}
\end{Theorem}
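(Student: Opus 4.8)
The plan is to reduce the statement to a local, degreewise comparison and then run a weight-filtration argument in the style of Steenbrink–Mokrane. Since both sides are complexes of sheaves on the (Zariski or étale) site of $Y$, and the question of whether a map is a quasi-isomorphism is local, we may assume $Y = \spec A$ with $A = B/(f_1\cdots f_r)$ as in the setup, with a fixed admissible lift $\tilde Z = \spec\tilde B$. It suffices to treat the first map, since the second follows from the first by the five lemma applied to the morphism of short exact sequences $(\ref{ses MW})\to(\ref{ses DRW})$ displayed above (the left and middle vertical arrows being quasi-isomorphisms forces the right one to be).

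\medskip

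First I would recall the Steenbrink-type weight (monodromy) filtration $P_\bullet$ on $\tilde\omega_{\tilde Y^\dagger}^\bullet$: writing $\tilde Y^{(j)}$ for the disjoint union of $j$-fold intersections of the components of $\tilde Y$ (these are smooth and, after weak completion, have honest Monsky–Washnitzer complexes), one has $\mathrm{gr}_k^P \tilde\omega_{\tilde Y^\dagger}^\bullet$ expressed via residue maps as a direct sum of shifted (overconvergent) de Rham complexes of the $\tilde Y^{(j)\dagger}$ — concretely $\mathrm{gr}_k^P\tilde\omega_{\tilde Y^\dagger}^{\bullet}\cong\bigoplus_{j} \Omega^{\bullet}_{\tilde Y^{(2j+k)\dagger}/W}(-j)[-2j-k]$ after tensoring with $\mathbb{Q}$, exactly as in Steenbrink's construction and its rigid avatar in Grosse-Klönne. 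On the de Rham–Witt side, the corresponding filtration is the one induced on $W^\dagger\tilde\omega_{Y/k}^\bullet$ by the analogous residue/weight filtration on the log-de Rham–Witt complex; its graded pieces are, by Hyodo's computation of $\mathrm{gr}^P W\tilde\omega_{Y/k}^\bullet$ (see \cite{Hyo91}, \cite{Mok93}) restricted to the overconvergent subsheaves, direct sums of shifted overconvergent de Rham–Witt complexes $W^\dagger\Omega^{\bullet}_{Y^{(2j+k)}/k}$ of the smooth strata $Y^{(j)}$, with the same Tate twists and shifts. The comparison morphism (\ref{comparison tilde}) is visibly compatible with these two filtrations — it is built from the Lazard/Teichmüller maps which respect $d\log$'s and hence the residue maps — so it induces on $k$-th graded pieces a direct sum of the maps
\[
\Omega^{\bullet}_{\tilde Y^{(m)\dagger}/W}\otimes\mathbb{Q}\;\longrightarrow\;W^\dagger\Omega^{\bullet}_{Y^{(m)}/k}\otimes\mathbb{Q}.
\]
Each $Y^{(m)}$ is smooth over $k$ and quasi-projective, so each such map is a quasi-isomorphism by the main comparison theorem of \cite{DLZ11} (overconvergent de Rham–Witt computes rigid cohomology, which is computed on the other side by the Monsky–Washnitzer complex of the lift $\tilde Y^{(m)}$). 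A spectral-sequence (finite, so no convergence issue) comparison argument then gives that (\ref{comparison tilde}) is a quasi-isomorphism after $\otimes\mathbb{Q}$, and (\ref{comparison}) follows as noted.

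\medskip

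The main obstacle I anticipate is \emph{setting up the weight filtration correctly and matching the two graded pieces}. On the Monsky–Washnitzer side one must be careful about the interaction of the weight filtration with weak completion — one needs that $P_k\tilde\omega^\bullet_{\tilde Y^\dagger}$ is again the weak completion of the corresponding object over $\tilde Z$, and that the residue isomorphisms survive weak completion rationally; this is essentially in \cite{Gro05}, \S5, but needs to be invoked precisely. On the de Rham–Witt side the analogous statement — that the Hyodo weight filtration on $W\tilde\omega^\bullet_{Y/k}$ has overconvergent graded pieces identified with $W^\dagger\Omega^\bullet$ of the strata — uses that overconvergence is detected stratum-by-stratum, which should follow from the explicit description of $W^\dagger\Omega^\bullet$ in terms of basic (log) Witt differentials and their Gauss norms (\cite{DLZ11}, \cite{Mat17}, \S10), but checking that the residue maps carry overconvergent classes to overconvergent classes and conversely is the delicate point. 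A clean alternative, which I would pursue if the filtration bookkeeping becomes unwieldy, is to bypass the explicit gradeds and instead compare the two \emph{cohomologies} directly: both compute (after $\otimes\mathbb{Q}$) the log-rigid cohomology of $Y/\mathfrak{S}_0$ — the left side by the definition of $H^\ast_{\logmw}$ and Grosse-Klönne's comparison, and the right side would follow once the first main theorem of the paper is in hand — but since Theorem~\ref{rational quasi-isomorphisms} is presumably an ingredient in that first theorem, the weight-filtration route is the honest one and is what I would carry out in full.
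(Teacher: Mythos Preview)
Your approach to the first (tilde) quasi-isomorphism is essentially the paper's: filter both sides by the weight filtration, identify the graded pieces via Poincar\'e residues with (overconvergent) de Rham complexes of the smooth strata $Y_I$, and invoke \cite{DLZ11} on each stratum. One small correction: the formula you wrote for $\mathrm{gr}_k^P\tilde\omega_{\tilde Y^\dagger}^\bullet$ as $\bigoplus_j \Omega_{\tilde Y^{(2j+k)\dagger}}(-j)[-2j-k]$ is the weight-graded of the Steenbrink \emph{double} complex, not of $\tilde\omega$ itself; the correct (and simpler) description, which is what the paper uses, is $\mathrm{Gr}_j\tilde\omega_{\tilde Y^\dagger}^\bullet\cong\bigoplus_{Y_I\in\mathcal{M}_j}\Omega^\bullet_{]Y_I[^\dagger}[-j]$, and likewise on the de Rham--Witt side.

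There is, however, a genuine gap in your reduction of the second map to the first. In the morphism of short exact sequences (\ref{ses MW}) $\to$ (\ref{ses DRW}), the left and right vertical arrows are the \emph{same} map (up to a shift): both are the comparison $\omega_{\tilde Y^\dagger}^\bullet\to W^\dagger\omega_{Y/k}^\bullet$ that you are trying to establish. So you cannot assume the left arrow is a quasi-isomorphism in order to deduce, via the five lemma, that the right one is --- the argument is circular as stated. (A cone argument shows only that the connecting map induces an isomorphism on the cohomology of $\mathrm{cone}(f)$; turning this into $\mathrm{cone}(f)\simeq 0$ requires a further input such as nilpotence of the monodromy at the level of complexes, which you have not supplied.)

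The paper resolves this by treating the second map directly with its own weight-filtration argument: one forms the Steenbrink-type double complexes $\mathcal{A}_{\mathbb{Q}}^{\dagger i,j}=\tilde\omega_{\tilde Y^\dagger}^{i+j+1}\otimes\mathbb{Q}/P_j$ and $\mathcal{B}_{\mathbb{Q}}^{\dagger i,j}=W^\dagger\tilde\omega_{Y/k}^{i+j+1}\otimes\mathbb{Q}/P_j$, shows (again by \cite{DLZ11} on the smooth strata) that their total complexes are quasi-isomorphic, and then uses the classical Steenbrink argument that $\omega\mapsto\omega\wedge\theta$ gives quasi-isomorphisms $\omega_{\tilde Y^\dagger}^\bullet\otimes\mathbb{Q}\xrightarrow{\sim}\mathcal{A}_{\mathbb{Q}}^{\dagger\bullet}$ and $W^\dagger\omega_{Y/k}^\bullet\otimes\mathbb{Q}\xrightarrow{\sim}\mathcal{B}_{\mathbb{Q}}^{\dagger\bullet}$. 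This is exactly the machinery whose graded pieces you wrote down, so you were close --- you just applied it to the wrong complex.
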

\begin{proof}
Recall that the weight filtration $P_{\bullet}\tilde{\omega}_{\tilde{Y}^{\dagger}}^{\bullet}$ of $\tilde{\omega}_{\tilde{Y}^{\dagger}}^{\bullet}$ (see (\cite{Gro05}, \S5) for it in this context) is defined as
\begin{equation*}
P_{j}\tilde{\omega}_{\tilde{Y}^{\dagger}}^{i}:=\text{image}\left(\Omega_{\tilde{Z}^{\dagger}/W}^{j}(\log\tilde{Y})\otimes\Omega_{\tilde{Z}^{\dagger}/W}^{i-j}\rightarrow\Omega_{\tilde{Z}^{\dagger}/W}^{i}(\log\tilde{Y})\right)\otimes_{\mathcal{O}_{\tilde{Z}^{\dagger}}}\mathcal{O}_{\tilde{Y}^{\dagger}}
\end{equation*}
Via the Poincar\'{e} residue maps, the graded pieces of the filtration are identified as
\begin{equation*}
\text{Gr}_{j}(\tilde{\omega}_{\tilde{Y}^{\dagger}}^{\bullet}\otimes\mathbb{Q})\xrightarrow{\sim}\bigoplus_{Y_{I}\in\mathcal{M}_{j}}\Omega_{]Y_{I}[_{\tilde{Z}^{\dagger}}^{\dagger}}^{\bullet}[-j]
\end{equation*}
where $\mathcal{M}_{j}$ denotes the collection of all (smooth) intersections of $j$ different components of $Y$ which lift to a smooth intersection of $j$ different liftings in $\tilde{Y}$, and where $\Omega_{]Y_{I}[_{\tilde{Z}^{\dagger}}^{\dagger}}^{\bullet}$ denotes the usual de Rham complex on the smooth affinoid dagger space $]Y_{I}[_{\tilde{Z}^{\dagger}}^{\dagger}$. By (\cite{Gro05}, \S5.2), one has an isomorphism of exact sequences
\begin{equation*}
\begin{adjustbox}{width=12cm}
\begin{tikzpicture}[descr/.style={fill=white,inner sep=1.5pt}]
        \matrix (m) [
            matrix of math nodes,
            row sep=2.5em,
            column sep=2em,
            text height=1.5ex, text depth=0.25ex
        ]
        {0 & \text{Gr}_{0}(\tilde{\omega}_{\tilde{Y}^{\dagger}}^{\bullet}\otimes\mathbb{Q}) & \text{Gr}_{1}(\tilde{\omega}_{\tilde{Y}^{\dagger}}^{\bullet}\otimes\mathbb{Q})[1] & \text{Gr}_{2}(\tilde{\omega}_{\tilde{Y}^{\dagger}}^{\bullet}\otimes\mathbb{Q})[2] & \cdots  \\
          0 & \Omega_{\tilde{Y}^{\dagger}}^{\bullet}\otimes\mathbb{Q} & \bigoplus\limits_{Y_{I}\in\mathcal{M}_{1}}\Omega_{]Y_{I}[_{\tilde{Z}^{\dagger}}^{\dagger}}^{\bullet}\otimes\mathbb{Q} & \bigoplus\limits_{Y_{I}\in\mathcal{M}_{2}}\Omega_{]Y_{I}[_{\tilde{Z}^{\dagger}}^{\dagger}}^{\bullet}\otimes\mathbb{Q} & \cdots\\
        };

        \path[overlay,->, font=\scriptsize]
       (m-1-1) edge (m-1-2)
       (m-1-2) edge node [above]{$\wedge\theta$} (m-1-3)
       (m-1-3) edge node [above]{$\wedge\theta$} (m-1-4)
       (m-1-4) edge node [above]{$\wedge\theta$} (m-1-5)
       (m-2-1) edge (m-2-2)
       (m-2-2) edge (m-2-3)
       (m-2-3) edge (m-2-4)
       (m-2-4) edge (m-2-5)
       (m-1-2) edge node [right] {$\wr$} (m-2-2)
       (m-1-3) edge node [right] {$\wr$} (m-2-3)
       (m-1-4) edge node [right] {$\wr$} (m-2-4);

       \end{tikzpicture} 
       \end{adjustbox}
\end{equation*} 
(the bottom row is exact because the $Y_{I}$'s are normal crossings intersections).

Similarly, consider the weight filtration of Mokrane \cite{Mok93} on $W\tilde{\omega}_{Y/k}^{\bullet}$
\begin{equation*}
P_{j}W\tilde{\omega}_{Y/k}^{i}:=\text{image}\left(W\tilde{\omega}_{Y/k}^{j}\otimes W\Omega_{Y/k}^{i-j}\rightarrow W\tilde{\omega}_{Y/k}^{i}\right)
\end{equation*}
and set 
\begin{equation*}
P_{j}W^{\dagger}\tilde{\omega}_{Y/k}^{i}:=\text{image}\left(W^{\dagger}\tilde{\omega}_{Y/k}^{j}\otimes W^{\dagger}\Omega_{Y/k}^{i-j}\rightarrow W^{\dagger}\tilde{\omega}_{Y/k}^{i}\right)
\end{equation*}
By construction, the comparison morphism (\ref{comparison tilde}) induces maps $P_{j}\tilde{\omega}_{\tilde{Y}^{\dagger}}^{\bullet}\rightarrow P_{j}W^{\dagger}\tilde{\omega}_{Y/k}^{\bullet}$ for each $j$, and therefore repects the weight filtrations. Moreover, we have $P_{j}W^{\dagger}\tilde{\omega}_{Y/k}^{\bullet}=W^{\dagger}\tilde{\omega}_{Y/k}^{\bullet}\cap P_{j}W\tilde{\omega}_{Y/k}^{\bullet}$ for each $j$. By (\cite{Mok93}, 3.7), the graded pieces of the weight filtration are identified, via the Poincar\'{e} residue maps, as
\begin{equation*}
\text{Gr}_{j}W\tilde{\omega}_{Y/k}^{\bullet}\xrightarrow{\sim}\bigoplus_{Y_{I}\in\mathcal{M}_{j}}W\Omega_{Y_{I}/k}^{\bullet}[-j]
\end{equation*}
and therefore
\begin{equation*}
\text{Gr}_{j}W^{\dagger}\tilde{\omega}_{Y/k}^{\bullet}\xrightarrow{\sim}\bigoplus_{Y_{I}\in\mathcal{M}_{j}}W^{\dagger}\Omega_{Y_{I}/k}^{\bullet}[-j]
\end{equation*}

Since each $Y_{I}$ is smooth over $k$, we know by \cite{DLZ11} that $W^{\dagger}\Omega_{Y_{I}/k}^{\bullet}\otimes\mathbb{Q}$ is quasi-isomorphic to $\Omega_{]Y_{I}[_{\tilde{Z}^{\dagger}}}^{\bullet}$, and therefore conclude that the comparison morphism (\ref{comparison tilde}) is a quasi-isomorphism when tensored with  $\mathbb{Q}$.

To show that the second comparison morphism induces a quasi-isomorphism after tensoring with $\mathbb{Q}$, define a double complex 
\begin{equation*}
\mathcal{A}_{\mathbb{Q}}^{\dagger i,j}:=\frac{\tilde{\omega}^{i+j+1}_{\tilde{Y}^{\dagger}}\otimes\mathbb{Q}}{P_{j}\tilde{\omega}^{i+j+1}_{\tilde{Y}^{\dagger}}\otimes\mathbb{Q}}
\end{equation*}
with differential $\mathcal{A}_{\mathbb{Q}}^{\dagger i,j}\rightarrow\mathcal{A}_{\mathbb{Q}}^{\dagger i+1,j}$ induced by $(-1)^{j}d$,  and the other differential $\mathcal{A}_{\mathbb{Q}}^{\dagger i,j}\rightarrow\mathcal{A}_{\mathbb{Q}}^{\dagger i,j+1}$ induced by $\omega\mapsto\omega\wedge\theta$. Let $\mathcal{A}_{\mathbb{Q}}^{\dagger\bullet}$ be the total complex of $\mathcal{A}_{\mathbb{Q}}^{\dagger\bullet,\bullet}$. Entirely similarly, define another double complex $\mathcal{B}_{\mathbb{Q}}^{\dagger\bullet,\bullet}$ by
\begin{equation*}
\mathcal{B}_{\mathbb{Q}}^{\dagger i,j}:=\frac{W^{\dagger}\tilde{\omega}_{Y/k}^{i+j+1}\otimes\mathbb{Q}}{P_{j}W^{\dagger}\tilde{\omega}_{Y/k}^{i+j+1}\otimes\mathbb{Q}}
\end{equation*}
with the differential $\mathcal{B}_{\mathbb{Q}}^{\dagger i,j}\rightarrow\mathcal{B}_{\mathbb{Q}}^{\dagger i+1,j}$ induced by $(-1)^{j}d$ and the  differential $\mathcal{B}_{\mathbb{Q}}^{\dagger i,j}\rightarrow\mathcal{B}_{\mathbb{Q}}^{\dagger i,j+1}$ induced by $\omega\mapsto\omega\wedge\theta$, and let $\mathcal{B}_{\mathbb{Q}}^{\dagger\bullet}$ be the total complex of $\mathcal{B_{\mathbb{Q}}}^{\dagger\bullet,\bullet}$. Then $\mathcal{A}_{\mathbb{Q}}^{\dagger\bullet}$ is quasi-isomorphic to $\mathcal{B}_{\mathbb{Q}}^{\dagger\bullet}$, because the graded quotients $\Omega_{]Y_{I}[_{\tilde{Z}}^{\dagger}}^{\bullet}$ and $W^{\dagger}\Omega_{Y_{I}/k}^{\bullet}\otimes\mathbb{Q}$ are quasi-isomorphic by the comparison theorem in the smooth case \cite{DLZ11}.

Now, the map
\begin{equation*}
\tilde{\omega}_{\tilde{Y}^{\dagger}}^{\bullet}\otimes\mathbb{Q}\rightarrow\mathcal{A}_{\mathbb{Q}}^{\dagger\bullet,0}; \ \ 
\omega\mapsto\omega\wedge\theta
\end{equation*}
induces a quasi-isomorphism $\omega_{\tilde{Y}^{\dagger}}^{\bullet}\otimes\mathbb{Q}\xrightarrow{\sim}\mathcal{A}_{\mathbb{Q}}^{\dagger\bullet}$.  This is (\cite{Gro05}, \S5) in this context, but the argument goes back to \cite{Ste76}. The same argument shows that the map 
\begin{equation*}
W^{\dagger}\tilde{\omega}_{Y/k}^{\bullet}\otimes\mathbb{Q}\rightarrow\mathcal{B}_{\mathbb{Q}}^{\dagger\bullet,0}; \ \ 
\omega\mapsto\omega\wedge\theta
\end{equation*}
induces a quasi-isomorphism $W^{\dagger}\omega_{Y/k}^{\bullet}\otimes\mathbb{Q}\xrightarrow{\sim}\mathcal{B}_{\mathbb{Q}}^{\dagger\bullet}$. As we already noted that $\mathcal{A}_{\mathbb{Q}}^{\dagger\bullet}\cong\mathcal{B}_{\mathbb{Q}}^{\dagger\bullet}$, we conclude that the comparison morphism $(\ref{comparison})$ is a quasi-isomorphism when tensored with $\mathbb{Q}$.
\end{proof}
\begin{Corollary}\label{affine comparison}
Let $Y$ be a semistable affine scheme over $S_{0}$. Then there is a canonical isomorphism 
\begin{equation*}
H_{\emph{\logrig}}^{\ast}(Y/\mathfrak{S}_{0})\cong\mathbb{H}^{\ast}(Y,W^{\dagger}\omega_{Y/k}^{\bullet}\otimes\mathbb{Q})
\end{equation*}
\end{Corollary}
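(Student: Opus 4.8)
The plan is to obtain this statement as an essentially formal consequence of Theorem~\ref{rational quasi-isomorphisms}. First I would fix, exactly as in \S\ref{logmw section}, an embedding $Y=\spec A\hookrightarrow Z=\spec B$ realising $Y$ as a normal crossings divisor on a smooth affine $k$-scheme, together with a smooth lift $\tilde{B}$ of $B$ and lifts $\tilde{f}_{i}$ of the defining equations; this produces the logarithmic Monsky-Washnitzer complex $\omega_{\tilde{Y}^{\dagger}}^{\bullet}$ and the comparison morphism (\ref{comparison}). By definition $H_{\logmw}^{\ast}(Y/K)=\mathbb{H}^{\ast}(Y,\omega_{\tilde{Y}^{\dagger}}^{\bullet}\otimes\mathbb{Q})$, and it was already observed above that $H_{\logmw}^{\ast}(Y/K)\cong H_{\logrig}^{\ast}(Y/\mathfrak{S}_{0})$ (this is the content of \S5 of \cite{Gro05}: the tube $]Y[_{\tilde{Z}^{\dagger}}^{\dagger}$ together with $\omega_{\tilde{Y}^{\dagger}}^{\bullet}\otimes\mathbb{Q}$ computes the log-rigid cohomology of $Y$). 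By Theorem~\ref{rational quasi-isomorphisms}, the morphism (\ref{comparison}) is a quasi-isomorphism of complexes of sheaves on $Y$ after $\otimes\mathbb{Q}$, so it induces an isomorphism $\mathbb{H}^{\ast}(Y,\omega_{\tilde{Y}^{\dagger}}^{\bullet}\otimes\mathbb{Q})\xrightarrow{\sim}\mathbb{H}^{\ast}(Y,W^{\dagger}\omega_{Y/k}^{\bullet}\otimes\mathbb{Q})$. Composing the three isomorphisms yields the corollary.

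The one point deserving care is the claim that the resulting isomorphism is \emph{canonical}, i.e.\ independent of the auxiliary data $(Z,\tilde{Z},\tilde{f}_{i})$. On the log-rigid side this is Grosse-Kl\"{o}nne's independence result (\cite{Gro05}, Lemma 1.4). On the de Rham-Witt side, Proposition~\ref{old-modern} identifies $W^{\dagger}\omega_{Y/k}^{\bullet}$ with Matsuue's complex $W^{\dagger}\Lambda_{Y/S_{0}}^{\bullet}$, which is constructed functorially from the pre-log ring $(A,\mathbb{N}^{d})$ and hence involves no choices. It then remains to check that the comparison morphism (\ref{comparison}) is compatible with a change of the data; this is where I expect to spend most of the effort, and it should follow from the functoriality of the Lazard morphisms $t_{F}$ that enter the construction of (\ref{map}) and (\ref{comparison tilde}), together with the compatibility of the Poincar\'{e} residue identifications used in the proof of Theorem~\ref{rational quasi-isomorphisms}.

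Finally I would note that, since $Y$ (hence $\tilde{Y}^{\dagger}$) is affine, the hypercohomologies above are simply the cohomologies of the complexes of global sections: the sheaves $\omega_{\tilde{Y}^{\dagger}}^{i}$ have no higher cohomology on the affine $\tilde{Y}^{\dagger}$, and the sheaves $W^{\dagger}\omega_{Y/k}^{i}$ have no higher cohomology on the affine $Y$ by the corresponding vanishing for the overconvergent de Rham-Witt sheaves of \cite{DLZ11}. Beyond this bookkeeping there is no genuine obstacle: the substantive work — the comparison of graded pieces of the Steenbrink-Mokrane weight filtrations — has already been carried out in Theorem~\ref{rational quasi-isomorphisms}, and the corollary is merely its cohomological shadow in the affine case.
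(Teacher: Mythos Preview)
Your approach is exactly the paper's: invoke Theorem~\ref{rational quasi-isomorphisms} to pass from $W^{\dagger}\omega_{Y/k}^{\bullet}\otimes\mathbb{Q}$ to the log-Monsky-Washnitzer complex, and then cite Grosse-Kl\"{o}nne (\cite{Gro05}, \S5.2) for the identification $H_{\logmw}^{\ast}(Y/K)\cong H_{\logrig}^{\ast}(Y/\mathfrak{S}_{0})$. The paper's proof is in fact just these two sentences; the independence of the auxiliary data, which you rightly flag, is not argued here but deferred to Proposition~\ref{isomorphisms and independence}, so your second paragraph anticipates work the paper does later rather than work needed for this corollary.
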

\begin{proof}
We showed that $\mathbb{H}^{\ast}(Y,W^{\dagger}\omega_{Y/k}^{\bullet}\otimes\mathbb{Q})\cong H_{\logmw}^{\ast}(Y/K)$. The comparison between log-Monsky-Washnitzer cohomology and log-rigid cohomology is more or less by definition (see (\cite{Gro05}, \S5.2)).
\end{proof}

\section{Comparison with log-rigid cohomology}\label{main comparison section}

Our aim in this section is to globalise the comparison isomorphism between log-rigid and overconvergent Hyodo-Kato cohomology. We note here that given a $W$-scheme $X$, we shall always write $\hat{X}$ for the formal completion of $X$ along the special fibre, and $\hat{X}_{K}$ for the associated rigid analytic generic fibre.

\begin{Def*}
Let $Y=\spec A$ be a semistable affine scheme over $S_{0}$. A semistable frame for $Y$ is the data of a normal crossings divisor relative to $W$
\begin{equation*}
G=\spec C\hookrightarrow F=\spec B
\end{equation*} 
of affine $W$-schemes where $F$ is smooth over $W$ and
\begin{equation*}
Y\hookrightarrow G_{k}:=G\times_{W}k
\end{equation*} 
is an exact closed $k$-immersion.
\end{Def*}
Note that if $(F,G)$ is a semistable frame for $Y$, then $F$ is a special frame for $Y$ in the sense of \cite{DLZ11} Definition 4.1. Recall from (\cite{DLZ11}, p.253) that the rigid tube $]Y[_{\hat{F}}\subset\hat{F}_{K}$ has a dagger space structure $]Y[^{\dagger}_{\hat{F}}$, which induces a dagger structure $]Y[^{\dagger}_{\hat{G}}$ on $]Y[_{\hat{G}}$. This is functorial in $(Y,F,G)$.
\begin{Def*}
An overconvergent semistable frame for $Y=\spec A$ is the data of a semistable frame $(F=\spec B,G=\spec C)$ for $Y$ along with a homomorphism $\varkappa:C\rightarrow W^{\dagger}(A)$ which lifts the comorphism $C\twoheadrightarrow A$ of the closed $W$-immersion $Y\hookrightarrow G$.    
\end{Def*}

Let $Y=\spec A$ be a semistable affine scheme over $S_{0}$ and suppose that $(F,G,\varkappa)$ is an overconvergent semistable frame for $Y$. Choose an embedding $F\hookrightarrow\mathbb{P}$ into a proper smooth $W$-scheme and write $\overline{G}$ and $\overline{F}$ for the respective closures of $G$ and $F$ inside $\mathbb{P}$. Let $\overline{G}_{k}$ and $\overline{F}_{k}$ be the special fibres of $\overline{G}$ and $\overline{F}$, and let $\overline{Y}$ be the closure of $Y$ inside $\overline{G}_{k}$. Since $\hat{G}_{K}$ is defined in $\hat{F}_{K}$ by overconvergent functions, we can extend the normal crossings divisor $\hat{G}_{K}\hookrightarrow\hat{F}_{K}$ to a normal crossings divisor $V'\hookrightarrow V$ where $V$ is a strict neighbourhood of $]F_{k}[_{\hat{\mathbb{P}}}$ in $]\overline{F}_{k}[_{\hat{\mathbb{P}}}$ and $V'$ is a strict neighbourhood of $]G_{k}[_{\hat{\mathbb{P}}}$ in $]\overline{G}_{k}[_{\hat{\mathbb{P}}}$. We get the following diagram of strict neighbourhoods:
\begin{center}
\begin{tikzpicture}[descr/.style={fill=white,inner sep=1.5pt}]
        \matrix (m) [
            matrix of math nodes,
            row sep=2.5em,
            column sep=0.5em,
            text height=1.5ex, text depth=0.25ex
        ]
        { ]F_{k}[_{\hat{\mathbb{P}}} & \subseteq & V & \subseteq & ]\overline{F}_{k}[_{\hat{\mathbb{P}}} \\
       ]G_{k}[_{\hat{\mathbb{P}}} & \subseteq & V' & \subseteq & ]\overline{G}_{k}[_{\hat{\mathbb{P}}} \\
       ]Y[_{\hat{G}} & \subseteq & \tilde{V}:=V'\cap ]\overline{Y}[_{\hat{\mathbb{P}}} & \subseteq & ]\overline{Y}[_{\hat{\mathbb{P}}} \\};

         \path[overlay,right hook->, font=\scriptsize]
        (m-3-1) edge (m-2-1)
        (m-2-1) edge (m-1-1)
        (m-3-3) edge (m-2-3)
        (m-2-3) edge (m-1-3)
        (m-3-5) edge (m-2-5)
        (m-2-5) edge (m-1-5);
                   
       \end{tikzpicture} 
\end{center} 

In order to define the comparison morphism, we will find it useful to have a rigid analytic description of log-rigid cohomology in terms of sheaves on strict neighbourhoods, in the style of Berthelot. Let $\tilde{\omega}_{\tilde{V}}^{\bullet}$ be the complex given by the restriction of $\Omega_{V}^{\bullet}(\log V')\otimes\mathcal{O}_{V'}$ to $\tilde{V}$, and $\omega_{\tilde{V}}^{\bullet}:=\tilde{\omega}_{\tilde{V}}^{\bullet}/(\tilde{\omega}_{\tilde{V}}^{\bullet-1}\wedge\theta)$ where $\theta=d\log f_{1}+\cdots+d\log f_{s}$ for the functions $f_{i}$ cutting out the normal crossings divisor $\tilde{V}$ in $V$.

Recall that given an abelian sheaf $\mathcal{F}$ on a strict neighbourhood $W$ of $]Y[_{\hat{G}}$ in $]\overline{Y}[_{\widehat{\overline{G}}}$, Berthelot's sheaf of overconvergent sections (\cite{Ber97},\S1.2) is defined to be 
\begin{equation*}
j^{\dagger}\mathcal{F}:=\varinjlim_{V}j_{W,V\ast}j_{W,V}^{-1}\mathcal{F}
\end{equation*}
where the limit is over strict neighbourhoods of $]Y[_{\hat{G}}$ in $W$, and $j_{W,V}:V\hookrightarrow W$ is the inclusion. 

We claim that we have the following Berthelot-style interpretation of log-rigid cohomology: 
\begin{Lemma}\label{rigid dagger comparison}
\begin{equation*}
\mathbb{R}\Gamma(]Y[_{\hat{G}}^{\dagger},\tilde{\omega}_{]Y[_{\hat{G}}^{\dagger}}^{\bullet})=\mathbb{R}\Gamma(\tilde{V},j^{\dagger}\tilde{\omega}_{\tilde{V}}^{\bullet})
\end{equation*}
and 
\begin{equation*}
R\Gamma_{\emph{\logrig}}(Y/\mathfrak{S}_{0}):=\mathbb{R}\Gamma(]Y[_{\hat{G}}^{\dagger},\omega_{]Y[_{\hat{G}}^{\dagger}}^{\bullet})=\mathbb{R}\Gamma(\tilde{V},j^{\dagger}\omega_{\tilde{V}}^{\bullet})
\end{equation*}
\end{Lemma}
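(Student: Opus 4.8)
The plan is to read both identities off the dictionary between Grosse-Kl\"{o}nne's dagger-space formulation of overconvergent cohomology and Berthelot's formulation via strict neighbourhoods and the functor $j^{\dagger}$. By the definition recalled in \S\ref{log-rigid section} together with the log-Monsky-Washnitzer description of \S\ref{logmw section}, the left-hand sides compute the (log) de Rham cohomology of the overconvergent tube $]Y[_{\hat{G}}^{\dagger}$ on the dagger space, while the right-hand sides compute the same cohomology on the rigid tube $]\overline{Y}[_{\hat{\mathbb{P}}}$ after applying $j^{\dagger}$. The comparison between these two computations is the logarithmic analogue of the classical identification of Monsky-Washnitzer cohomology with Berthelot's rigid cohomology; the non-logarithmic case is in \cite{Gro00}, and the passage to the log de Rham complex is carried out as in \cite{Gro05}, \S5.

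I would first reduce the second identity to the first. On both sides $\omega^{\bullet}$ is the quotient of $\tilde{\omega}^{\bullet}$ by $\wedge\theta$ with the same $\theta=d\log f_{1}+\cdots+d\log f_{s}$; since $j^{\dagger}$ is exact and commutes with restriction, it commutes with passing to this quotient complex, and the colimit descriptions below apply to any bounded complex of coherent sheaves. Hence the argument for $\tilde{\omega}^{\bullet}$ applies \emph{verbatim} to $\omega^{\bullet}$ (equivalently, the comparison morphism one produces identifies the $\wedge\theta$-subcomplexes and so descends to the quotients), and it suffices to prove the first identity.

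For the core step I would exhibit both sides as one and the same filtered colimit over the system $\{W\}$ of strict neighbourhoods of $]Y[_{\hat{G}}$ inside $\tilde{V}=V'\cap\,]\overline{Y}[_{\hat{\mathbb{P}}}$. Because $\overline{Y}\hookrightarrow\mathbb{P}$ with $\mathbb{P}$ proper, the tube $]\overline{Y}[_{\hat{\mathbb{P}}}$ is partially proper and $]Y[_{\hat{G}}$ has a cofinal system of quasi-compact such $W$; Berthelot's formula for $j^{\dagger}$ then gives
\[
\mathbb{R}\Gamma(\tilde{V},j^{\dagger}\tilde{\omega}_{\tilde{V}}^{\bullet})=\varinjlim_{W}\mathbb{R}\Gamma(W,\tilde{\omega}_{\tilde{V}}^{\bullet}|_{W}),
\]
the colimit commuting with $\mathbb{R}\Gamma$ as the $W$ are quasi-compact and quasi-separated. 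On the dagger side, $]Y[_{\hat{G}}^{\dagger}$ is, up to the identifications of \cite{Gro00}, the germ of this same system, and its overconvergent log de Rham complex is the weak completion of the algebraic log de Rham complex of \S\ref{logmw section}; since $\Omega^{1}$ of a finitely generated weakly complete algebra is again finitely generated, this weak completion equals $\varinjlim_{W}\tilde{\omega}_{\tilde{V}}^{\bullet}|_{W}$, and the comparison of coherent cohomology on a dagger space with the cohomology over its defining strict neighbourhoods \cite{Gro00} yields $\mathbb{R}\Gamma(]Y[_{\hat{G}}^{\dagger},\tilde{\omega}_{]Y[_{\hat{G}}^{\dagger}}^{\bullet})=\varinjlim_{W}\mathbb{R}\Gamma(W,\tilde{\omega}_{\tilde{V}}^{\bullet}|_{W})$. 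Comparing the two proves the first identity, and hence the lemma.

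I expect the only real work to be bookkeeping: one must check that the affinoid-dagger neighbourhoods defining $]Y[_{\hat{G}}^{\dagger}$ and the strict neighbourhoods of $]Y[_{\hat{G}}$ inside $]\overline{Y}[_{\hat{\mathbb{P}}}$ are cofinal in one another, so that the two colimits literally agree, and that cohomology commutes with these colimits --- which is precisely where partial properness of $]\overline{Y}[_{\hat{\mathbb{P}}}$ and quasi-compactness of the $W$ enter. A subsidiary point worth flagging is that $Y$ need not be proper, so $]Y[_{\hat{G}}$ is itself non-quasi-compact; it is exactly to bring the $j^{\dagger}$-formalism to bear that one passes to the compactification $\overline{Y}\hookrightarrow\mathbb{P}$ and works inside $]\overline{Y}[_{\hat{\mathbb{P}}}$. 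None of this is deep --- most of it is already available in \cite{Gro00} and \cite{Gro05} --- and the care required is only in aligning the two neighbourhood systems.
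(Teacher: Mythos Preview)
Your proposal is correct and rests on the same core ingredient as the paper's proof: the comparison, due to Grosse-Kl\"{o}nne \cite{Gro00}, between cohomology of coherent sheaves on a dagger tube and $j^{\dagger}$-cohomology on the associated partially proper rigid tube. The difference is purely structural. The paper invokes \cite{Gro00}, Theorem 5.1(a) as a black box, applied degree by degree to each coherent sheaf $\tilde{\omega}^{i}$ on the partially proper space $]\overline{Y}[_{\widehat{\overline{G}}}$, and then passes to hypercohomology via the first spectral sequence; you instead unwind that theorem's mechanism directly, writing both sides as the same filtered colimit over quasi-compact strict neighbourhoods. Your route is slightly more explicit about why the comparison works, while the paper's is shorter by citing the result off the shelf. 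A second minor difference: you deduce the $\omega^{\bullet}$-statement from the $\tilde{\omega}^{\bullet}$-statement via the quotient by $\wedge\theta$, whereas the paper simply repeats the argument with the log de Rham complex relative to the base $(K,\mathbb{N})$; both are fine.
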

\begin{proof}
We shall only prove the first statement since the second is proved using exactly the same argument. 

In order to prove the lemma it suffices to prove that 
\begin{equation*}
\mathbb{R}\Gamma(]Y[_{\hat{G}}^{\dagger},\tilde{\omega}_{]Y[_{\hat{G}}^{\dagger}}^{\bullet})\cong\mathbb{R}\Gamma(]\overline{Y}[_{\widehat{\overline{G}}},j^{\dagger}\tilde{\omega}_{]\overline{Y}[_{\widehat{\overline{G}}}}^{\bullet})
\end{equation*}
Indeed, the right hand side is the same as $\mathbb{R}\Gamma(\tilde{V},j^{\dagger}\tilde{\omega}_{\tilde{V}}^{\bullet})$ by (\cite{Ber97}, 1.2(iv)).

For a coherent sheaf $\mathcal{F}$ on $]\overline{Y}[_{\widehat{\overline{G}}}$ considered as a dagger space with corresponding coherent sheaf $\mathcal{F}'$ on the rigid space $]\overline{Y}[_{\widehat{\overline{G}}}$, let $\tilde{\mathcal{F}}$ be the restriction of $\mathcal{F}$ to the open subspace $]Y[_{\hat{G}}^{\dagger}$. Since the map $]Y[_{\hat{G}}^{\dagger}\xrightarrow{j}]\overline{Y}[_{\widehat{\overline{G}}}$ is affinoid and any section of $\tilde{\mathcal{F}}$ is defined via a neighbourhood of $]Y[_{\hat{G}}^{\dagger}$ in $]\overline{Y}[_{\widehat{\overline{G}}}$, we have a canonical map
\begin{equation*}
Rj_{\ast}\tilde{\mathcal{F}}=j_{\ast}\tilde{\mathcal{F}}\rightarrow j^{\dagger}\mathcal{F}'
\end{equation*} 
If $\mathcal{F}^{\bullet}$ is a complex of coherent sheaves on $]\overline{Y}[_{\widehat{\overline{G}}}$, with corresponding complexes $\mathcal{F}'^{\bullet}$, $\tilde{\mathcal{F}}^{\bullet}$ defined as above, we get a canonical map 
\begin{equation*}
\mathbb{R}\Gamma(]Y[_{\hat{G}}^{\dagger},\tilde{\mathcal{F}}^{\bullet})\rightarrow\mathbb{R}\Gamma(]\overline{Y}[_{\widehat{\overline{G}}},j^{\dagger}\mathcal{F}'^{\bullet})
\end{equation*}
 
Let $\tilde{\omega}_{]\overline{Y}[_{\widehat{\overline{G}}}^{\dagger}}^{\bullet}$ be the log de Rham complex of the log morphism $]\overline{Y}[_{\widehat{\overline{G}}}^{\dagger}\rightarrow(\text{Sp}^{\dagger}K,\{\ast\})$. Since $]\overline{Y}[_{\widehat{\overline{G}}}$ is a partially proper rigid space and $\omega_{]\overline{Y}[_{\widehat{\overline{G}}}^{\dagger}}^{\bullet}$ is a coherent $\mathcal{O}_{]\overline{Y}[_{\widehat{\overline{G}}}^{\dagger}}$-module, we have canonical isomorphisms 
\begin{equation*}
H^{j}(]\overline{Y}[_{\widehat{\overline{G}}},j_{Y}^{\dagger}\tilde{\omega}_{]\overline{Y}[_{\widehat{\overline{G}}}}^{i})\cong H^{j}(]Y[_{\widehat{\overline{G}}}^{\dagger},{\tilde{\omega}_{]\overline{Y}[_{\widehat{\overline{G}}}^{\dagger}}^{i}|}_{]Y[_{\widehat{\overline{G}}}^{\dagger}})=H^{j}(]Y[_{\widehat{\overline{G}}}^{\dagger},\tilde{\omega}_{]Y[_{\widehat{\overline{G}}}^{\dagger}}^{i})
\end{equation*}
for all $i,j$ by \cite{Gro00} Theorem 5.1(a). But $]Y[_{\widehat{\overline{G}}}=]Y[_{\hat{G}}$ and therefore $]Y[_{\widehat{\overline{G}}}^{\dagger}=]Y[_{\hat{G}}^{\dagger}$ too. Hence
\begin{equation*}
H^{j}(]\overline{Y}[_{\widehat{\overline{G}}},j_{Y}^{\dagger}\tilde{\omega}_{]\overline{Y}[_{\widehat{\overline{G}}}}^{i})\cong H^{j}(]Y[_{\hat{G}}^{\dagger},\tilde{\omega}_{]Y[_{\hat{G}}^{\dagger}}^{i})
\end{equation*}
for all $i,j$. Since we have a canonical map
\begin{equation*}
\mathbb{R}\Gamma(]Y[_{\hat{G}}^{\dagger},\tilde{\omega}_{]Y[_{\hat{G}}^{\dagger}}^{\bullet})\rightarrow\mathbb{R}\Gamma(]\overline{Y}[_{\widehat{\overline{G}}},j^{\dagger}\tilde{\omega}_{]\overline{Y}[_{\widehat{\overline{G}}}}^{\bullet})
\end{equation*}
we conclude from the first hypercohomology spectral sequence that
\begin{equation*}
\mathbb{R}\Gamma(]\overline{Y}[_{\widehat{\overline{G}}},j_{Y}^{\dagger}\tilde{\omega}_{]\overline{Y}[_{\widehat{\overline{G}}}}^{\bullet})\cong\mathbb{R}\Gamma(]Y[_{\hat{G}}^{\dagger},\tilde{\omega}_{]Y[_{\hat{G}}^{\dagger}}^{\bullet})
\end{equation*}
as required.

The second statement is proved with exactly the same argument, but where one instead considers the log de Rham complex with respect to the base $(K,\mathbb{N})$. 
\end{proof}

In \cite{DLZ11} \S4, there is constructed an explicit fundamental system of strict affinoid neighbourhoods $V_{\lambda,\eta}$ of $]Y[_{\hat{G}}$ in $]\overline{Y}[_{\hat{\mathbb{P}}}$ (here $0<\lambda,\eta<1$), canonical morphisms (\cite{DLZ11}, pp 251)
\begin{equation*}
\Gamma(V_{\lambda,\eta},j^{\dagger}\mathcal{O}_{V_{\lambda,\eta}})\rightarrow W^{\dagger}(A)\otimes\mathbb{Q}
\end{equation*}
and therefore morphisms
\begin{equation*}
\Gamma(\tilde{V},j^{\dagger}\mathcal{O}_{\tilde{V}})\rightarrow W^{\dagger}(A)\otimes\mathbb{Q}
\end{equation*}
for any strict neighbourhood $\tilde{V}$ of $]Y[_{\hat{G}}$ in $]\overline{Y}[_{\hat{\mathbb{P}}}$.  The universal property of the de Rham complex then gives a map
\begin{equation*}
\Gamma(\tilde{V},j^{\dagger}\tilde{\omega}_{\tilde{V}}^{\bullet})\rightarrow u_{\ast}W^{\dagger}\Omega_{U/k}^{\bullet}[\theta]/(\theta^{2})\otimes\mathbb{Q}
\end{equation*}
where $u:U\hookrightarrow Y$ is the smooth locus of $Y=\spec A$, and this clearly factors through
\begin{equation}\label{map 1}
\Gamma(\tilde{V},j^{\dagger}\tilde{\omega}_{\tilde{V}}^{\bullet})\rightarrow W^{\dagger}\tilde{\omega}_{A/k}^{\bullet}\otimes\mathbb{Q}
\end{equation}
The argument used after \cite{DLZ11} (4.28) can be used verbatim to show that this factors through a morphism
\begin{equation}\label{tilde morphism}
\mathbb{R}\Gamma(\tilde{V},j^{\dagger}\tilde{\omega}_{\tilde{V}}^{\bullet})\rightarrow W^{\dagger}\tilde{\omega}_{A/k}^{\bullet}\otimes\mathbb{Q}
\end{equation}
Indeed, $\tilde{V}$ contains some $V_{\lambda,\eta}$ and we can consider the restriction
\begin{equation*}
\mathbb{R}\Gamma(\tilde{V},j^{\dagger}\tilde{\omega}_{\tilde{V}}^{\bullet})\rightarrow\mathbb{R}\Gamma(V_{\lambda,\eta},j^{\dagger}\tilde{\omega}_{V_{\lambda,\eta}}^{\bullet})
\end{equation*} 
Given any strict neighbourhood $\tilde{V}'$ of $]Y[_{\hat{G}}$ in $\tilde{V}$, let us write $\alpha_{\tilde{V}'}:\tilde{V}'\cap V_{\lambda,\eta}\hookrightarrow V_{\lambda,\eta}$ for the inclusion. Then by the definition of $j^{\dagger}$ we have 
\begin{equation*}
j^{\dagger}\tilde{\omega}_{V_{\lambda,\eta}}^{\bullet}=\varinjlim_{\tilde{V}'}\alpha_{\tilde{V}'\ast}\tilde{\omega}_{\tilde{V}'\cap V_{\lambda,\eta}}^{\bullet}
\end{equation*}
where the direct limit runs over all strict neighbourhoods  $\tilde{V}'$ of $]Y[_{\hat{G}}$ in $\tilde{V}$. Therefore 
\begin{equation*}
\mathbb{R}\Gamma(V_{\lambda,\eta},j^{\dagger}\tilde{\omega}_{V_{\lambda,\eta}}^{\bullet})=\mathbb{R}\Gamma(V_{\lambda,\eta},\varinjlim_{\tilde{V}'}\alpha_{\tilde{V}'\ast}\tilde{\omega}_{\tilde{V}'\cap V_{\lambda,\eta}}^{\bullet})\cong\varinjlim_{\tilde{V}'}\mathbb{R}\Gamma(V_{\lambda,\eta},\alpha_{\tilde{V}'\ast}\tilde{\omega}_{\tilde{V}'\cap V_{\lambda,\eta}}^{\bullet})
\end{equation*}
where the isomorphism is by the quasicompactness of $V_{\lambda,\eta}$. Now for each $\tilde{V}'$ one can find a $\lambda'$ such that $V_{\lambda',\eta}$ is a strict affinoid neighbourhood of $]Y[_{\hat{G}}$ in $\tilde{V}'\cap V_{\lambda,\eta}$. The restriction to the affinoids $V_{\lambda',\eta}$ gives a map
\begin{equation*}
\mathbb{R}\Gamma(V_{\lambda,\eta},j^{\dagger}\tilde{\omega}_{V_{\lambda,\eta}}^{\bullet})\rightarrow\varinjlim_{\lambda'}\mathbb{R}\Gamma(V_{\lambda',\eta},\tilde{\omega}_{V_{\lambda',\eta}}^{\bullet})\cong\varinjlim_{\lambda'}\Gamma(V_{\lambda',\eta},\tilde{\omega}_{V_{\lambda',\eta}}^{\bullet})\rightarrow W^{\dagger}\tilde{\omega}_{A/k}^{\bullet}\otimes\mathbb{Q}
\end{equation*}
where the isomorphism is because each $V_{\lambda',\eta}$ is affinoid and the last map is induced by the morphisms $\Gamma(V_{\lambda',\eta},\mathcal{O}_{V_{\lambda',\eta}})\rightarrow W^{\dagger}(A)\otimes\mathbb{Q}$ constructed in (\cite{DLZ11}, pp.251). Precomposing with the restriction $\mathbb{R}\Gamma(\tilde{V},j^{\dagger}\tilde{\omega}_{\tilde{V}}^{\bullet})\rightarrow\mathbb{R}\Gamma(V_{\lambda,\eta},j^{\dagger}\tilde{\omega}_{V_{\lambda,\eta}}^{\bullet})$ then gives the desired morphism.

If $f_{1},\ldots, f_{s}$ define the normal crossings divisor $\tilde{V}$ in $V$ then $f_{1}\cdots f_{s}=0$, and hence $\bar{f}_{1}\cdots\bar{f}_{s}=0$. Therefore $d\log[\bar{f}_{1}]+\cdots+d\log[\bar{f}_{s}]=0$ and the morphism (\ref{tilde morphism}) induces a morphism
\begin{equation}\label{morphism}
R\Gamma_{\logrig}(Y/\mathfrak{S}_{0})=\mathbb{R}\Gamma(\tilde{V},j^{\dagger}\omega_{\tilde{V}}^{\bullet})\rightarrow W^{\dagger}\omega_{A/k}^{\bullet}\otimes\mathbb{Q}
\end{equation}

\begin{Proposition}\label{isomorphisms and independence}
The morphisms (\ref{tilde morphism}) and (\ref{morphism}) for overconvergent semistable frames are isomorphisms in the derived category and do not depend on the choice of overconvergent semistable frame for $Y$.
\end{Proposition}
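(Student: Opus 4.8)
The plan is to reduce the global statement to the affine comparison of Theorem~\ref{rational quasi-isomorphisms} by exhibiting, for an overconvergent semistable frame $(F,G,\varkappa)$, a commutative diagram relating the Berthelot-style log-rigid complex on $\tilde{V}$ to the Monsky-Washnitzer complex $\tilde{\omega}_{\tilde{Y}^{\dagger}}^{\bullet}$ associated to the lift $\tilde{B}:=B^{\dagger}$, $\tilde{A}:=C^{\dagger}/(\text{defining equations})$. First I would observe that the smooth $W$-scheme $F=\spec B$ together with the normal crossings divisor $G=\spec C$ furnishes precisely the data needed to form $\tilde{\omega}_{\tilde{Y}^{\dagger}}^{\bullet}$ and $\omega_{\tilde{Y}^{\dagger}}^{\bullet}$ of \S\ref{logmw section}, after passing to weak completions; concretely, $\Gamma(\hat{F}_{K}^{\dagger}, \tilde{\omega}_{\hat{F}_{K}^{\dagger}}^{\bullet}) \otimes \mathcal{O}_{\hat{G}_{K}^{\dagger}} = \tilde{\omega}_{\tilde{Y}^{\dagger}}^{\bullet} \otimes \mathbb{Q}$ by the comparison between dagger spaces and weakly completed algebras (\cite{Gro00}). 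By Lemma~\ref{rigid dagger comparison}, $\mathbb{R}\Gamma(\tilde{V}, j^{\dagger}\tilde{\omega}_{\tilde{V}}^{\bullet}) = \mathbb{R}\Gamma(]Y[_{\hat{G}}^{\dagger}, \tilde{\omega}_{]Y[_{\hat{G}}^{\dagger}}^{\bullet})$, and the latter — since $Y = \spec A$ is affine and the complex is coherent — computes the global sections $\tilde{\omega}_{\tilde{Y}^{\dagger}}^{\bullet} \otimes \mathbb{Q}$ (acyclicity on the affinoid/Stein dagger tube). Thus (\ref{tilde morphism}) is identified with the comparison morphism (\ref{comparison tilde}) of the previous section, and similarly (\ref{morphism}) with (\ref{comparison}).

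The key step is then to check that the morphism (\ref{tilde morphism}) constructed here via the canonical maps $\Gamma(V_{\lambda,\eta}, j^{\dagger}\mathcal{O}_{V_{\lambda,\eta}}) \to W^{\dagger}(A) \otimes \mathbb{Q}$ of \cite{DLZ11} genuinely coincides, under the identification above, with (\ref{comparison tilde}). This is a matter of unwinding the universal properties: both maps are induced from the map on functions $\tilde{A}^{\dagger} \otimes \mathbb{Q} \to W^{\dagger}(A) \otimes \mathbb{Q}$, which on the \cite{DLZ11} side is the one induced by $\varkappa$ and the Lazard morphism $t_F$, and on the \S\ref{logmw section} side is exactly the composite $\tilde{A}^{\dagger} \to W^{\dagger}(A)$ built from $t_F$ in the diagram preceding Theorem~\ref{rational quasi-isomorphisms}. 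Both then extend uniquely to the respective de Rham(-Witt) complexes by sending $d\log \tilde{f}_i \mapsto d\log[f_i]$, so they agree. Having made this identification, the fact that (\ref{tilde morphism}) and (\ref{morphism}) are quasi-isomorphisms is immediate from Theorem~\ref{rational quasi-isomorphisms}.

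For independence of the frame, I would argue that any two overconvergent semistable frames $(F_1,G_1,\varkappa_1)$ and $(F_2,G_2,\varkappa_2)$ can be dominated by a third — take $F_3 = F_1 \times_W F_2$ with the product log structure, $G_3$ the appropriate divisor, and $\varkappa_3$ induced by $\varkappa_1, \varkappa_2$ (using that $W^{\dagger}(A)$ receives compatible maps) — and that the comparison morphisms are compatible with the projection maps; this is the standard fibre-product trick for frame-independence, already used in \cite{DLZ11} for special frames and in \cite{Gro05} for log-rigid cohomology. Since each comparison morphism is a quasi-isomorphism into the fixed target $W^{\dagger}\omega_{Y/k}^{\bullet} \otimes \mathbb{Q}$ (independent of all choices), the two morphisms must induce the same map in the derived category. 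The main obstacle I anticipate is the bookkeeping in the first identification — matching the two a priori different recipes for the comparison morphism (the $V_{\lambda,\eta}$-limit construction versus the $t_F$-Lazard construction) carefully enough that the diagram of short exact sequences in \S\ref{logmw section} is seen to be the same diagram obtained here — rather than any genuinely new geometric input.
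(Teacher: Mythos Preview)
Your independence argument via the product frame $(F\times_W F',\ \text{pr}_1^{-1}(G)+\text{pr}_2^{-1}(G'),\ \varkappa\otimes\varkappa')$ is exactly what the paper does, and is correct.

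The issue is with the order in which you run the two halves. You attempt to prove the isomorphism for an \emph{arbitrary} overconvergent semistable frame $(F,G,\varkappa)$ by identifying it with the log-Monsky--Washnitzer comparison (\ref{comparison tilde}). But a general semistable frame need not be a log-MW situation: by definition $Y\hookrightarrow G_k$ is only a closed immersion, so $G$ is not in general a lift of $Y$, and your sentence ``$F=\spec B$ together with $G=\spec C$ furnishes precisely the data needed to form $\tilde{\omega}_{\tilde{Y}^{\dagger}}^{\bullet}$ of \S\ref{logmw section}'' is not justified. The object $\Gamma(]Y[_{\hat{G}}^{\dagger},\tilde{\omega}^{\bullet})$ is then a tube complex, not literally the complex $\tilde{\omega}_{\tilde{Y}^{\dagger}}^{\bullet}$ of \S\ref{logmw section}, and your ``key step'' of matching the two recipes for the comparison map amounts to exactly the frame-independence you are postponing to the end. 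So as written the argument is circular.

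The paper avoids this by reversing the order: it proves independence \emph{first} (a self-contained diagram chase using the product frame and functoriality of the construction of (\ref{tilde morphism})), and only then observes that one is free to choose the specific log-MW frame $(\tilde{Z},\tilde{Y},\varkappa)$ of \S\ref{logmw section}, for which the morphism (\ref{tilde morphism}) is by construction the map (\ref{comparison tilde}) and hence a quasi-isomorphism by Theorem~\ref{rational quasi-isomorphisms}. Reordering your proof this way removes the gap with no new input.
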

\begin{proof}
We first prove the independence assertion. Let $(F,G,\varkappa)$ and $(F',G',\varkappa')$ be two overconvergent semistable frames for $Y$. Let 
\begin{equation*}
F\xleftarrow{\text{pr}_{1}}F\times_{W}F'\xrightarrow{\text{pr}_{2}}F'
\end{equation*}
be the projections. Then the product 
\begin{equation*}
(F'',G'',\varkappa''):=(F\times_{W}F', \text{pr}_{1}^{-1}(G)+\text{pr}_{2}^{-1}(G'),\varkappa\otimes\varkappa')
\end{equation*}
is another overconvergent semistable frame for $Y$. Choose strict neighbourhoods $\tilde{V}$, $\tilde{V}'$ and $\tilde{V}''$ such that $\tilde{V}''$ is sent to $\tilde{V}$ and $\tilde{V}'$ by the respective projections. By functoriality, the projections induce diagrams
\begin{center}
\begin{tikzpicture}[descr/.style={fill=white,inner sep=1.5pt}]
        \matrix (m) [
            matrix of math nodes,
            row sep=3.5em,
            column sep=2.5em,
            text height=1.5ex, text depth=0.25ex
        ]
        { \mathbb{R}\Gamma(\tilde{V},j^{\dagger}\tilde{\omega}_{\tilde{V}}^{\bullet}) & \mathbb{R}\Gamma(\tilde{V}'',j^{\dagger}\tilde{\omega}_{\tilde{V}''}^{\bullet})  & \mathbb{R}\Gamma(\tilde{V}',j^{\dagger}\tilde{\omega}_{\tilde{V}'}^{\bullet})  \\
       \ & W^{\dagger}\tilde{\omega}_{A/k}^{\bullet}\otimes\mathbb{Q} & \ \\};

         \path[overlay,->, font=\scriptsize]
        (m-1-1) edge node [above] {$\text{pr}_{1}^{\ast}$} (m-1-2)
        (m-1-3) edge node [above] {$\text{pr}_{2}^{\ast}$} (m-1-2)
        (m-1-1) edge (m-2-2)
        (m-1-3) edge (m-2-2)
        (m-1-2) edge (m-2-2);
                   
       \end{tikzpicture} 
\end{center} 
and
\begin{center}
\begin{tikzpicture}[descr/.style={fill=white,inner sep=1.5pt}]
        \matrix (m) [
            matrix of math nodes,
            row sep=3.5em,
            column sep=2.5em,
            text height=1.5ex, text depth=0.25ex
        ]
        { \mathbb{R}\Gamma(\tilde{V},j^{\dagger}\omega_{\tilde{V}}^{\bullet}) & \mathbb{R}\Gamma(\tilde{V}'',j^{\dagger}\omega_{\tilde{V}''}^{\bullet})  & \mathbb{R}\Gamma(\tilde{V}',j^{\dagger}\omega_{\tilde{V}'}^{\bullet})  \\
       \ & W^{\dagger}\omega_{A/k}^{\bullet}\otimes\mathbb{Q} & \ \\};

         \path[overlay,->, font=\scriptsize]
        (m-1-1) edge node [above] {$\text{pr}_{1}^{\ast}$} (m-1-2)
        (m-1-3) edge node [above] {$\text{pr}_{2}^{\ast}$} (m-1-2)
        (m-1-1) edge (m-2-2)
        (m-1-3) edge (m-2-2)  
        (m-1-2) edge (m-2-2);
                   
       \end{tikzpicture} 
\end{center} 
and we see therefore that the morphisms (\ref{tilde morphism}) and (\ref{morphism}) do not depend on the choice of overconvergent semistable frame for $Y$. 

To prove that the morphisms are isomorphisms, since we have already shown independence, we may as well work with the log-Monsky-Washnitzer frame $(\tilde{Z},\tilde{Y},\varkappa)$ as in \S\ref{logmw section} for the overconvergent semistable frame for $Y$ (remember that $Y$ is affine). We then conclude by Theorem \ref{rational quasi-isomorphisms}. 
\end{proof}

\begin{Theorem}\label{main comparison}
Let $Y$ be a quasi-projective semistable scheme over $S_{0}$. Then the overconvergent Hyodo-Kato complex computes the log-rigid cohomology of $Y$:
\begin{equation*}
R\Gamma_{\emph{\logrig}}(Y/\mathfrak{S}_{0})\cong\mathbb{R}\Gamma(Y,W^{\dagger}\omega_{Y/k}^{\bullet}\otimes\mathbb{Q})
\end{equation*}
\end{Theorem}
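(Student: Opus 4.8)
The plan is to reduce to the affine case already established in Corollary \ref{affine comparison} by a \v{C}ech descent argument; the point is that Proposition \ref{isomorphisms and independence} makes the comparison isomorphism functorial enough to glue.

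First I would pick a finite affine open cover $Y=\bigcup_{i\in I}Y_{i}$. Since $Y$ is quasi-projective it is separated, so each intersection $Y_{J}:=\bigcap_{i\in J}Y_{i}$ (for $\emptyset\neq J\subseteq I$) is again affine; and semistability over $S_{0}$ is \'{e}tale-local, so each $Y_{J}$ is a semistable affine scheme over $S_{0}$. For every $i$ choose an overconvergent semistable frame $(F_{i},G_{i},\varkappa_{i})$ for $Y_{i}$: such a frame exists by the construction of \S\ref{logmw section} (realise $Y_{i}$ as a normal crossings divisor in a smooth affine $k$-scheme, lift over $W$, and use the Lazard morphism $t_{F}$). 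For each $J$ put $(F_{J},G_{J},\varkappa_{J}):=\bigl(\prod_{i\in J}F_{i},\ \sum_{i\in J}\mathrm{pr}_{i}^{-1}(G_{i}),\ \bigotimes_{i\in J}\varkappa_{i}|_{Y_{J}}\bigr)$, which is exactly the product construction appearing in the proof of Proposition \ref{isomorphisms and independence}; this is an overconvergent semistable frame for $Y_{J}$, and for $J'\subseteq J$ the projection $F_{J}\twoheadrightarrow F_{J'}$ realises the open immersion $Y_{J}\hookrightarrow Y_{J'}$ compatibly with the frames.

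Applying Corollary \ref{affine comparison} and Proposition \ref{isomorphisms and independence} to $Y_{J}$ with the frame $(F_{J},G_{J},\varkappa_{J})$ gives, for each $J$, an isomorphism in the derived category
\[
R\Gamma_{\logrig}(Y_{J}/\mathfrak{S}_{0})\ \xrightarrow{\ \sim\ }\ \mathbb{R}\Gamma\bigl(Y_{J},W^{\dagger}\omega_{Y_{J}/k}^{\bullet}\otimes\mathbb{Q}\bigr),
\]
and the functoriality part of Proposition \ref{isomorphisms and independence} (its projection diagrams, applied to the maps $F_{J}\twoheadrightarrow F_{J'}$) shows that these isomorphisms commute with all the face maps. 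Hence they assemble into a quasi-isomorphism of the associated cosimplicial objects, and therefore into a quasi-isomorphism after totalisation. It then remains to identify each totalisation with the corresponding side of the theorem. For the de Rham-Witt side this is \v{C}ech descent for the complex of abelian sheaves $W^{\dagger}\omega_{Y/k}^{\bullet}\otimes\mathbb{Q}$, together with the acyclicity of the overconvergent log de Rham-Witt sheaves on affines coming from \cite{DLZ11} (transported along Proposition \ref{old-modern}), which yields $\mathbb{R}\Gamma(Y_{J},W^{\dagger}\omega_{Y/k}^{\bullet}\otimes\mathbb{Q})=W^{\dagger}\omega_{Y_{J}/k}^{\bullet}\otimes\mathbb{Q}$. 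For the log-rigid side one uses that Grosse-Kl\"{o}nne's definition is itself a totalisation over an open cover and is independent of that cover (\cite{Gro05}, Lemmas 1.2 and 1.4): taking the cover $\{Y_{i}\}$ with embeddings $\mathfrak{P}_{i}$ the weak completions of the regular $W$-schemes attached to the frames as in \S\ref{complexes}, the simplicial dagger space $]V_{H_{\bullet}}[_{\mathfrak{P}_{H_{\bullet}}}^{\dagger}$ computes the totalisation of $J\mapsto R\Gamma_{\logrig}(Y_{J}/\mathfrak{S}_{0})$, each term being described by Lemma \ref{rigid dagger comparison}.

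I expect the main obstacle to be precisely this last identification: one must check that Grosse-Kl\"{o}nne's construction, built from exactifications of diagonal embeddings into log smooth weak formal $\mathfrak{S}_{0}$-schemes, really does produce the \v{C}ech totalisation of the affine log-rigid cohomologies $R\Gamma_{\logrig}(Y_{J}/\mathfrak{S}_{0})$, and that this identification is natural in the simplicial index, so that it is compatible with the frame-theoretic model of Lemma \ref{rigid dagger comparison}. This amounts to matching up, simultaneously for all $J$, the various strict neighbourhoods, exactifications and specialisation maps; once that bookkeeping is done the levelwise quasi-isomorphisms of the previous paragraph pass to the totalisations and the theorem follows. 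The remaining ingredients, namely affine acyclicity of $W^{\dagger}\omega_{Y/k}^{\bullet}$ and functoriality of the comparison maps, are either recorded in the excerpt or entirely routine.
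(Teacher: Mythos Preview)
Your strategy is the paper's strategy: choose an affine cover, build product frames on the intersections, invoke the affine comparison levelwise, and assemble simplicially. Where you diverge from the paper is in the execution of the gluing, and this is where you have underestimated the work.

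The paper does \emph{not} glue the derived-category isomorphisms of Corollary~\ref{affine comparison} directly. Instead it passes to complexes of Zariski sheaves on each $Y_{J}$: after applying the strong fibration theorem to the product frame, it produces an honest map of sheaf complexes $\mathrm{sp}_{\ast}\omega_{M_{J}}^{\bullet}\to W^{\dagger}\omega_{Y_{J}/k}^{\bullet}\otimes\mathbb{Q}$, and then proves (Proposition~\ref{crucial quasi-isomorphism}) that $\mathrm{sp}_{\ast}\omega_{M_{J}}^{\bullet}\to\mathbb{R}\mathrm{sp}_{\ast}\omega_{M_{J}}^{\bullet}$ is a quasi-isomorphism. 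Only then does one have strict simplicial maps that can be fed into $\mathbb{R}\theta_{\ast}$. This acyclicity statement is the genuine technical core of the argument: it requires analysing the log de~Rham complex on dagger spaces of the shape $\tilde{Q}\times\breve{D}^{n}(m)$ via two auxiliary lemmas (the logarithmic analogues of \cite[4.45, 4.47]{DLZ11}), and it is neither recorded in the excerpt nor routine. Your flagged obstacle---matching Grosse-Kl\"onne's simplicial model with the \v{C}ech totalisation---is comparatively soft once one has Lemma~\ref{rigid dagger comparison}; the hard part is exactly what you dismissed in your final sentence.

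There is also a smaller issue on the de~Rham--Witt side: the identification $\mathbb{R}\Gamma(Y_{J},W^{\dagger}\omega_{Y/k}^{\bullet}\otimes\mathbb{Q})=W^{\dagger}\omega_{Y_{J}/k}^{\bullet}\otimes\mathbb{Q}$ that you invoke is affine acyclicity of the individual sheaves $W^{\dagger}\omega_{Y/k}^{j}$, and this does not follow formally from \cite{DLZ11} via Proposition~\ref{old-modern} (which only identifies two models of the complex, not their higher cohomology). The paper sidesteps this entirely by working with $\mathbb{R}\mathrm{sp}_{\ast}$ and $\mathbb{R}\theta_{\ast}$ throughout, so that no such acyclicity is needed.
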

\begin{proof}
Let $Y=\bigcup_{i\in I}Y_{i}$ be an open covering, and for $J=\{i_{0},\ldots,i_{t}\}\subset I$, let $Y_{J}:=Y_{i_{0}}\cap\cdots\cap Y_{i_{t}}$. By choosing a possibly finer covering, we may assume that $Y_{J}=\spec A_{J}$ is affine and that $A_{J}=(A_{i_{0}})_{\bar{g}}$ for some element $\bar{g}\in A_{i_{0}}$, where $Y_{i_{0}}=\spec A_{i_{0}}$. It is here that we use the quasi-projectivity hypothesis (compare the argument in (\cite{DLZ11}, Def. 4.33) and the subsequent discussion). For each $i\in J$, choose a smooth affine $k$-scheme $X_{i}=\spec B_{i}$ such that $Y_{i}$ is a normal crossings divisor in $X_{i}$. We may assume that each $X_{i}$ is standard smooth in the sense of (\cite{DLZ11}, Def. 4.33)). Let $F_{i}$ be a smooth affine $W$-scheme lifting $X_{i}$, which is again standard smooth, and let $Z_{i}$ be a lifting over $W$ of $Y_{i}$ which is a normal crossings divisor in $F_{i}$ (compare with (\cite{Kat96}, Prop. 11.3)). 

Now let $Z_{i_{0}}=\spec\tilde{A}_{i_{0}}$ and $Z'_{i_{0}}=\spec(\tilde{A}_{i_{0}})_{g}$ for some lifting $g$ of $\bar{g}$, and let $F_{i_{0}}=\spec\tilde{B}_{i_{0}}$ and $F'_{i_{0}}=\spec(\tilde{B}_{i_{0}})_{f}$ for some lifting $f$ of $g$. Set 
\begin{equation*}
E:=\prod_{\stackrel{i\in J}{i\neq i_{0}}}F_{i}
\end{equation*}
Then, by the strong fibration theorem, the special frames $(Y_{J}, F_{i_{0}}\times E)$ and $(Y_{J}, F'_{i_{0}}\times E)$, have isomorphic dagger spaces. See the argument in the proof of (\cite{DLZ11}, Prop. 4.35). Since $E$ is standard smooth, we can choose an \'{e}tale map $E\rightarrow\mathbb{A}_{W}^{n}$ for some $n$. Again by the strong fibration theorem, the dagger spaces associated to $(Y_{J}, F'_{i_{0}}\times E)$ and $(Y_{J}, F'_{i_{0}}\times\mathbb{A}_{W}^{n})$, are isomorphic. By the coordinate change argument in the proof of (\cite{DLZ11}, Prop. 4.35), we may assume that the map $Y_{J}\rightarrow\mathbb{A}_{W}^{n}$ factors through the zero section $\spec k\rightarrow\mathbb{A}_{W}^{n}$. Hence the dagger space associated to $(Y_{J}, F'_{i_{0}}\times\mathbb{A}_{W}^{n})$ is isomorphic to $Q\times\breve{D}^{n}$, where $\breve{D}$ is the open unit dagger disk and $Q$ is the dagger space associated to the special frame $(Y_{J},F'_{i_{0}})$. Using the notation of (\cite{DLZ11}, p.252), we write the dagger space associated to $(Y_{J},F'_{i_{0}}\times\mathbb{A}_{W}^{n})$ as $Q\times\breve{D}^{n}=:]Y_{J}[_{\widehat{F_{i_{0}}'\times\mathbb{A}_{W}^{n}}}^{\dagger}$, where $\widehat{F_{i_{0}}'\times\mathbb{A}_{W}^{n}}$ denotes the weak formal completion of $F_{i_{0}}'\times\mathbb{A}_{W}^{n}$ along $p$.

Now consider the embeddings
\begin{equation*}
Y_{J}\hookrightarrow\prod_{i\in J}Z_{i}\hookrightarrow Z_{i_{0}}\times\prod_{\stackrel{j\in J}{j\neq i_{0}}}F_{j}\hookrightarrow F_{i_{0}}\times E
\end{equation*}
and 
\begin{equation*}
Y_{J}\hookrightarrow Z'_{i_{0}}\times\prod_{\stackrel{i\in J}{i\neq i_{0}}}Z_{i}\hookrightarrow\sum_{i\in J}(Z'_{i}\times\prod_{\stackrel{j\in J}{j\neq i}}F'_{j})\hookrightarrow F'_{i_{0}}\times E
\end{equation*}
where
\begin{equation*}
Z'_{i}:=\begin{cases} 
      Z_{i} & \text{if }i\neq i_{0} \\
      Z'_{i_{0}} & \text{if }i=i_{0}  
   \end{cases}
\end{equation*}
and likewise for $F'_{i}$. Note that
\begin{equation*}
D_{J}:=\sum_{i\in J}(Z'_{i}\times\prod_{\stackrel{j\in J}{j\neq i}}F'_{j})=(Z'_{i_{0}}\times\prod_{\stackrel{i\in J}{i\neq i_{0}}}F'_{i})+\sum_{\stackrel{i\in J}{i\neq i_{0}}}(F'_{i_{0}}\times Z_{i}\times\prod_{\stackrel{j\in J}{j\neq i,i_{0}}}F_{i})
\end{equation*}
is a normal crossings divisor in $F'_{i_{0}}\times E$, and $]Y_{J}[_{\hat{D}_{J}}^{\dagger}$ is a normal crossings divisor in  $]Y_{J}[_{\widehat{F_{i_{0}}'\times E}}^{\dagger}$. Applying the strong fibration theorem and coordinate change argument as above, we get a commutative diagram of dagger spaces
\begin{center}
\begin{tikzpicture}[descr/.style={fill=white,inner sep=1.5pt}]
        \matrix (m) [
            matrix of math nodes,
            row sep=2.5em,
            column sep=2.5em,
            text height=1.5ex, text depth=0.25ex
        ]
        { ]Y_{J}[_{\hat{D}_{J}}^{\dagger} & ]Y_{J}[_{\widehat{F_{i_{0}}'\times E}}^{\dagger} & ]X_{J}[_{\widehat{F_{i_{0}}'\times E}}^{\dagger} \\
          M_{J} & Q\times\breve{D}^{n} & \tilde{Q}\times\breve{D}^{n} \\
        };

         \path[overlay,right hook->, font=\scriptsize]
        (m-1-1) edge (m-1-2)
        (m-1-2) edge (m-1-3)
        (m-2-1) edge (m-2-2)
        (m-2-2) edge (m-2-3);
        
        \path[overlay,->, font=\scriptsize]
        (m-1-1) edge node [right] {$\wr$} (m-2-1);       
        
        \draw[shorten <=0.2cm,->, font=\scriptsize]
        (m-1-2) edge node [right] {$\wr$} (m-2-2)
        (m-1-3) edge node [right] {$\wr$} (m-2-3);

       \end{tikzpicture} 
\end{center} 
where the dagger space $M_{J}$, which is a normal crossings divisor in $Q\times\breve{D}^{n}$, is a sum of normal crossings divisors of the following form:-
\\
\par
(a) $]Y_{J}[_{\hat{Z}'_{i_{0}}}^{\dagger}\times\breve{D}^{n}$, where $]Y_{J}[_{\hat{Z}'_{i_{0}}}^{\dagger}$ is a normal crossings divisor in $Q$
\\
\par
(b) $Q\times\breve{D}^{n}(m)$, where $\breve{D}^{n}(m)$ is the divisor in $\breve{D}^{n}$ corresponding to 
\begin{equation*}
\text{Sp } K\langle T_{1},\ldots, T_{n}\rangle^{\dagger}/(T_{1}\cdots T_{m})
\end{equation*}

Let $\omega_{M_{J}}^{\bullet}$ denote the logarithmic de Rham complex on the normal crossings divisor $M_{J}$ in $Q\times\breve{D}^{n}$, as defined in \cite{Gro05}. We rewrite the comparison morphism defined in (\ref{morphism}) in terms of dagger spaces using Lemma \ref{rigid dagger comparison}. Then for the case (a) we have a map
\begin{align*}
\Gamma(]Y_{J}[_{\hat{Z}'_{i_{0}}}^{\dagger}\times\breve{D}^{n},\omega_{]Y_{J}[_{\hat{Z}'_{i_{0}}}^{\dagger}\times\breve{D}^{n}}^{\bullet})
& =\Gamma(]Y_{J}[_{\hat{Z}'_{i_{0}}}^{\dagger}\times\breve{D}^{n},\omega_{]Y_{J}[_{\hat{Z}'_{i_{0}}}^{\dagger}}^{\bullet}\otimes\Omega_{\breve{D}^{n}}^{\bullet})\\
& \rightarrow\Gamma(]Y_{J}[_{\hat{Z}'_{i_{0}}}^{\dagger},\omega_{]Y_{J}[_{\hat{Z}'_{i_{0}}}^{\dagger}}^{\bullet})\rightarrow W^{\dagger}\omega_{A_{J}/k}^{\bullet}\otimes\mathbb{Q}
\end{align*}
where $\Omega_{\breve{D}^{n}}^{\bullet}$ is the usual (non-logarithmic) de Rham complex on $\breve{D}^{n}$, and where the first map is the projection and the second comes from the comparison between the log-Monsky-Washnitzer complex and overconvergent Hyodo-Kato complex constructed in (\ref{comparison}). For the case (b) we have a map
\resizebox{1.0\linewidth}{!}{
  \begin{minipage}{\linewidth}
\begin{align*}
\Gamma(Q\times\breve{D}^{n}(m),\omega_{Q\times\breve{D}^{n}(m)}^{\bullet})
& =\Gamma(Q\times\breve{D}^{n}(m),\omega_{]Y_{J}[_{\hat{F}'_{i_{0}}}^{\dagger}\times\breve{D}^{n}(m)}^{\bullet}) \\
& =\Gamma(Q\times\breve{D}^{n}(m),\Omega_{]Y_{J}[_{\hat{F}'_{i_{0}}}^{\dagger}}^{\bullet}\otimes\omega_{\breve{D}^{n}(m)}^{\bullet}) \\
& \rightarrow\Gamma(]Y_{J}[_{\hat{F}'_{i_{0}}}^{\dagger},\Omega_{]Y_{J}[_{\hat{F}'_{i_{0}}}^{\dagger}}^{\bullet})\rightarrow W^{\dagger}\Omega_{A_{J}/k}^{\bullet}\otimes\mathbb{Q}\rightarrow W^{\dagger}\omega_{A_{J}/k}^{\bullet}\otimes\mathbb{Q}
\end{align*}
\end{minipage}
}
where $\Omega_{]Y_{J}[_{\hat{F}'_{i_{0}}}^{\dagger}}^{\bullet}$ is the usual de Rham complex on $]Y_{J}[_{\hat{F}'_{i_{0}}}^{\dagger}$ and the first map is again the projection. Let $\text{sp}: \ ]Y_{J}[_{\hat{D}_{J}}^{\dagger}=M_{J}\rightarrow Y_{J}$ be the specialisation map. Then by the argument (\cite{DLZ11}, 4.32), we have a local version of the above morphisms and get morphisms of complexes of Zariski sheaves on $Y_{J}$
\begin{equation*}
\text{sp}_{\ast}\omega_{]Y_{J}[_{\hat{Z}'_{i_{0}}}^{\dagger}\times\breve{D}^{n}}^{\bullet}\rightarrow W^{\dagger}\omega_{Y_{J}/k}^{\bullet}\otimes\mathbb{Q}
\end{equation*}
and
\begin{equation*}
\text{sp}_{\ast}\omega_{Q\times\breve{D}^{n}(m)}^{\bullet}\rightarrow W^{\dagger}\omega_{Y_{J}/k}^{\bullet}\otimes\mathbb{Q}
\end{equation*}
which give rise to a morphism
\begin{equation}\label{affine pieces}
\text{sp}_{\ast}\omega_{]Y_{J}[_{\hat{D}_{J}}^{\dagger}}^{\bullet}=\text{sp}_{\ast}\omega_{M_{J}}^{\bullet}\rightarrow W^{\dagger}\omega_{Y_{J}/k}^{\bullet}\otimes\mathbb{Q}
\end{equation}
into the overconvergent Hyodo-Kato complex (tensored with $\mathbb{Q}$) of $Y_{J}$. For the convenience of the reader we recall the argument in (\cite{DLZ11}, pp 253 and 254). We use the notation as above. Let $Y_{J}=\spec A_{J}$, with $A_{J}=(A_{i_{0}})_{\bar{g}}$, $Z'_{i_{0}}=\spec(\tilde{A}_{i_{0}})_{g}$ with $g$ a lifting of $\bar{g}$, and $F'_{i_{0}}=\spec(\tilde{B}_{i_{0}})_{f}$ with $f$ a lifting of $g$. Let $U=\spec(A_{J})_{\bar{h}}$, $h$ a lifting of $\bar{h}$ in $(\tilde{A}_{i_{0}})_{g}$ and $\tilde{h}$ a lifting of $h$ in $(\tilde{B}_{i_{0}})_{f}$. Let $Z''_{i_{0}}=\spec((\tilde{A}_{i_{0}})_{g})_{h}$ and $F''_{i_{0}}=\spec((\tilde{B}_{i_{0}})_{f})_{\tilde{h}}$. Then $]U[_{\hat{Z}'_{i_{0}}}$ is open in $]Y_{J}[_{\hat{Z}'_{i_{0}}}$ and $]U[_{\hat{F}'_{i_{0}}}$ is open in $]Y_{J}[_{\hat{F}'_{i_{0}}}$, hence $]U[_{\hat{Z}'_{i_{0}}}$ and $]U[_{\hat{F}'_{i_{0}}}$ inherit dagger space structures from $]Y_{J}[^{\dagger}_{\hat{Z}'_{i_{0}}}$ and $]Y_{J}[^{\dagger}_{\hat{F}'_{i_{0}}}$. By the strong fibration theorem applied in the same way as in the section following (\cite{DLZ11}, 4.32), the dagger spaces $]U[^{\dagger}_{\hat{Z}'_{i_{0}}}$ and $]U[^{\dagger}_{\hat{Z}''_{i_{0}}}$ resp. $]U[^{\dagger}_{\hat{F}'_{i_{0}}}$ and $]U[^{\dagger}_{\hat{F}''_{i_{0}}}$ coincide. This induces the two morphisms 
\begin{equation*}
\text{sp}_{\ast}\omega_{]Y_{J}[_{\hat{Z}'_{i_{0}}}^{\dagger}\times\breve{D}^{n}}^{\bullet}\rightarrow W^{\dagger}\omega_{Y_{J}/k}^{\bullet}\otimes\mathbb{Q}
\end{equation*}
and
\begin{equation*}
\text{sp}_{\ast}\omega_{Q\times\breve{D}^{n}(m)}^{\bullet}\rightarrow W^{\dagger}\omega_{Y_{J}/k}^{\bullet}\otimes\mathbb{Q}
\end{equation*}
which combine to give the morphism (\ref{affine pieces}).

We claim, in analogy to (\cite{DLZ11}, Cor. 4.38) that the canonical morphisms
\begin{equation*}
\text{sp}_{\ast}\omega_{M_{J}}^{\bullet}\rightarrow\mathbb{R}\text{sp}_{\ast}\omega_{M_{J}}^{\bullet}
\end{equation*}
are quasi-isomorphisms. For now we will assume this claim; the proof is postponed until Proposition \ref{crucial quasi-isomorphism}. Then (\ref{affine pieces}) together with the claim gives a morphism
\begin{equation}\label{R affine pieces}
\mathbb{R}\text{sp}_{\ast}\omega_{M_{J}}^{\bullet}\rightarrow W^{\dagger}\omega_{Y_{J}/k}^{\bullet}\otimes\mathbb{Q}
\end{equation}
Let $U=\spec C$ be an affine open in $Y_{J}$. By applying $\mathbb{R}\Gamma(U,-)$ to (\ref{R affine pieces}) we get from Lemma \ref{rigid dagger comparison} and Proposition \ref{isomorphisms and independence}  an isomorphism
\begin{equation*}
\mathbb{R}\Gamma(U,\mathbb{R}\text{sp}_{\ast}\omega_{]\text{sp}^{-1}(U)[^{\dagger}_{\hat{D}_{J}}}^{\bullet})\xrightarrow{\sim}\mathbb{R}\Gamma(U,W^{\dagger}\omega_{U/k}^{\bullet}\otimes\mathbb{Q})\simeq W^{\dagger}\omega_{C/k}^{\bullet}\otimes\mathbb{Q}
\end{equation*}
where the latter isomorphism follows from the fact that $H^{i}(U,W^{\dagger}\omega_{U/k}^{r})=0$ for all $r\geq 0,i>0$. This is the semistable analogue of (\cite{DLZ11}, Prop. 1.2(b)) and is derived from the smooth case by considering the graded quotients of the weight filtration on the \v{C}ech complex associated to $W^{\dagger}\omega_{C/k}^{r}$ as in the proof of Propositon \ref{Cech}. The above isomorphism shows that (\ref{R affine pieces}) is already an isomorphism.
 
Now, as we range through the subsets $J\subset I$, we get an augmented simplicial $k$-scheme $\theta:Y_{\bullet}:=\{Y_{J}\}_{J\subset I}\rightarrow Y$. Let
\begin{equation*}
\tilde{D}_{J}:=\sum_{i\in J}(Z_{i}\times\prod_{\stackrel{j\in J}{j\neq i}}F_{j})
\end{equation*}
which is a normal crossings divisor in $F_{i_{0}}\times E$. Again by the strong fibration theorem the dagger tubes $]Y_{J}[_{\hat{\tilde{D}}_{J}}^{\dagger}$ and $]Y_{J}[_{\hat{D}_{J}}^{\dagger}$ are isomorphic. We get a simplicial object of special frames $\left\{(Y_{J},\tilde{D}_{J})\right\}_{J\subset I}$, and this gives rise to a simplicial object of dagger spaces
\begin{equation*}
M_{\bullet}:=\left\{]Y_{J}[_{\hat{\tilde{D}}_{J}}^{\dagger}\right\}_{J\subset I}=\left\{M_{J}\right\}_{J\subset I}
\end{equation*}
The quasi-isomorphisms (\ref{R affine pieces}) glue to give a quasi-isomorphism of simplicial complexes on $Y_{\bullet}$
\begin{equation}\label{simplicial}
\mathbb{R}\text{sp}_{\ast}\omega_{M_{\bullet}}^{\bullet}\xrightarrow{\sim} W^{\dagger}\omega_{Y_{\bullet}/k}^{\bullet}\otimes\mathbb{Q}
\end{equation}
Therefore 
\begin{equation*}
\mathbb{R}\theta_{\ast}\mathbb{R}\text{sp}_{\ast}\omega_{M_{\bullet}}^{\bullet}\cong \mathbb{R}\theta_{\ast}W^{\dagger}\omega_{Y_{\bullet}/k}^{\bullet}\otimes\mathbb{Q}\cong W^{\dagger}\omega_{Y/k}^{\bullet}\otimes\mathbb{Q}
\end{equation*}
and we deduce that
\begin{equation*}
R\Gamma_{\logrig}(Y/\mathfrak{S}_{0})=\mathbb{R}\Gamma(Y,\mathbb{R}\theta_{\ast}\mathbb{R}\text{sp}_{\ast}\omega_{M_{\bullet}}^{\bullet})\cong\mathbb{R}\Gamma(Y,W^{\dagger}\omega_{Y/k}^{\bullet}\otimes\mathbb{Q})
\end{equation*}
as desired.
\end{proof}

It therefore remains to prove the following proposition:
\begin{Proposition}\label{crucial quasi-isomorphism}
Let $M_{J}$ be the dagger space considered in the proof of Theorem \ref{main comparison}. Then the canonical morphism
\begin{equation*}
\text{sp}_{\ast}\omega_{M_{J}}^{\bullet}\rightarrow\mathbb{R}\text{sp}_{\ast}\omega_{M_{J}}^{\bullet}
\end{equation*}
is a quasi-isomorphism.
\end{Proposition}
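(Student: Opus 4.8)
The plan is to reduce the statement to a vanishing result about coherent sheaves and then to bootstrap from the smooth affine case of \cite{DLZ11} by a Mayer--Vietoris argument along the irreducible components of $M_{J}$. Since each $\omega_{M_{J}}^{i}$ is a locally free $\mathcal{O}_{M_{J}}$-module of finite rank, the hypercohomology spectral sequence shows that it suffices to prove that $R^{q}\text{sp}_{\ast}\mathcal{F}=0$ for all $q>0$ and all coherent $\mathcal{O}_{M_{J}}$-modules $\mathcal{F}$. This assertion is local on $Y_{J}$, and the preimage under $\text{sp}$ of a distinguished affine open of $Y_{J}$ is a tube of exactly the same shape (a normal crossings divisor inside the product of a Monsky--Washnitzer tube with an open polydisk); so it is enough to show that $H^{q}(M_{J},\mathcal{F})=0$ for $q>0$ and $\mathcal{F}$ coherent, uniformly over such preimages.

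Next I would stratify $M_{J}$. Write $M_{J}=\bigcup_{\alpha}D_{\alpha}$ for the decomposition into irreducible components and, for a nonempty set $S$ of indices, put $D_{S}:=\bigcap_{\alpha\in S}D_{\alpha}$ with closed immersion $\iota_{S}\colon D_{S}\hookrightarrow M_{J}$; since the $D_{\alpha}$ cross normally, each $D_{S}$ is a smooth dagger space. Étale-locally $M_{J}$ is $V(x_{1}\cdots x_{r})$ inside a smooth ambient space with $D_{\alpha}=V(x_{\alpha})$, so tensoring the (always exact) Mayer--Vietoris complex of the ideals $(x_{\alpha})$ with the locally free sheaf $\mathcal{F}$ produces an exact sequence
\begin{equation*}
0\longrightarrow\mathcal{F}\longrightarrow\bigoplus_{|S|=1}\iota_{S\ast}(\mathcal{F}|_{D_{S}})\longrightarrow\bigoplus_{|S|=2}\iota_{S\ast}(\mathcal{F}|_{D_{S}})\longrightarrow\cdots
\end{equation*}
which is therefore a resolution of $\mathcal{F}$ on all of $M_{J}$. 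As $\iota_{S}$ is an exact closed immersion of dagger spaces (so $R^{q}\iota_{S\ast}=0$ for $q>0$) and $\text{sp}\circ\iota_{S}=\bar{\iota}_{S}\circ\text{sp}_{S}$, where $\text{sp}_{S}\colon D_{S}\to Y_{J,S}$ is the specialisation map of the stratum and $\bar{\iota}_{S}\colon Y_{J,S}\hookrightarrow Y_{J}$ is a closed immersion of affine schemes, we get $R^{q}\text{sp}_{\ast}\iota_{S\ast}\mathcal{G}=\bar{\iota}_{S\ast}R^{q}\text{sp}_{S\ast}\mathcal{G}$. Applying $\mathbb{R}\text{sp}_{\ast}$ to the above resolution thus reduces the problem to proving $R^{q}\text{sp}_{S\ast}\mathcal{G}=0$ for $q>0$ and every coherent $\mathcal{G}$ on each smooth stratum $D_{S}$.

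Finally I would identify the strata. From the description of $M_{J}$ in the proof of Theorem~\ref{main comparison} as a union of pieces of types (a) and (b), together with the coordinate changes supplied by the strong fibration theorem, each $D_{S}$ is isomorphic to a product $]W_{S}[_{\mathcal{Q}_{S}}^{\dagger}\times\breve{D}^{m_{S}}$, in which $W_{S}$ is a smooth affine $k$-scheme (a closed stratum of the semistable scheme $Y_{J}$), $\mathcal{Q}_{S}$ is a smooth weak formal $W$-scheme containing $W_{S}$ as a closed subscheme of its special fibre (a special frame in the sense of \cite{DLZ11}, Def.~4.1), and $\breve{D}^{m_{S}}$ is an open dagger polydisk. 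For such products, $R^{q}\text{sp}_{\ast}$ annihilates coherent sheaves in positive degrees; this is exactly what underlies \cite{DLZ11}, Cor.~4.38. Indeed, one exhausts $]W_{S}[_{\mathcal{Q}_{S}}^{\dagger}\times\breve{D}^{m_{S}}$ by affinoid dagger subdomains with inner transition morphisms, so the space is quasi-Stein in the sense of \cite{Gro00} and the dagger analogue of Kiehl's theorem kills higher coherent cohomology; running the same exhaustion over the distinguished opens of $W_{S}$ yields the vanishing of the higher direct images, and not merely of the global cohomology. This completes the proof.

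I expect the main obstacle to be the second and third steps taken together: one must verify that the irreducible components of $M_{J}$ are genuinely smooth and that their mutual intersections, after the isomorphisms used in the proof of Theorem~\ref{main comparison}, really do acquire the product form $]W_{S}[^{\dagger}\times\breve{D}^{m_{S}}$ (that is, that the strong-fibration-theorem identifications restrict compatibly to the stratification), and that the specialisation maps are compatible with the closed immersions $\iota_{S}$ and $\bar{\iota}_{S}$. Once this combinatorial and geometric bookkeeping is in place, the remaining input (quasi-Steinness, Kiehl's theorem for dagger spaces, and the smooth affine case of \cite{DLZ11}) is classical.
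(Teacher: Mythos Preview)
Your approach differs from the paper's in an essential way, and there is a real gap in your identification of the strata.

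The paper does not stratify $M_{J}$ all the way down to smooth pieces. It uses Mayer--Vietoris only to split $M_{J}$ into the two summands (a) $]Y_{J}[_{\hat{Z}'_{i_{0}}}^{\dagger}\times\breve{D}^{n}$ and (b) $Q\times\breve{D}^{n}(m)$, each of which is still a normal crossings divisor. Case (a) is handled by observing that the proof of \cite{DLZ11}, Prop.~4.37/Cor.~4.38 only needs the first factor to be an affinoid dagger space with locally free $\omega^{p}$, which $]Y_{J}[_{\hat{Z}'_{i_{0}}}^{\dagger}$ is. Case (b) is the delicate one: the paper first enlarges $Q$ to the smooth \emph{affinoid} $\tilde{Q}=]X_{J}[_{\hat{F}'_{i_{0}}}^{\dagger}$ (legitimate because $Q=\mathrm{sp}^{-1}(Y_{J})\subset\tilde{Q}$ and the assertion is local on the target), and then runs an explicit Mittag--Leffler computation by exhausting $\breve{D}^{n}(m)$ by closed divisors $U_{i}(m)$, using two new lemmas (analogues of \cite{DLZ11}, Lemmas 4.45 and 4.47) that identify $A\otimes_{K}\omega_{D^{n}(m)}^{\bullet}$ and $A\otimes_{K}\omega_{\breve{D}^{n}(m)}^{\bullet}$ up to quasi-isomorphism with a complex having zero differentials.

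Your stratification would bypass those lemmas, which is attractive, but your description of the strata is not correct for the pure type-(b) intersections: an intersection of $j$ components of $Q\times\breve{D}^{n}(m)$ is $Q\times\breve{D}^{n-j}$, where $Q=]Y_{J}[_{\hat{F}'_{i_{0}}}^{\dagger}$ is the tube of the \emph{semistable} scheme $Y_{J}$, not of a smooth stratum. So the stratum is not of the form $]W_{S}[^{\dagger}\times\breve{D}^{m_{S}}$ with $W_{S}$ smooth, and you cannot invoke ``the smooth affine case of \cite{DLZ11}'' there. Moreover $Q$ is only an open subspace of the affinoid $\tilde{Q}$, not itself affinoid, so the input you attribute to \cite{DLZ11}, Cor.~4.38 (which works with an affinoid first factor and proceeds by explicit exhaustion, not by Kiehl's theorem) is not directly available. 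To salvage your argument you would need to prove separately that open tubes such as $Q$, and their preimages over distinguished opens of $Y_{J}$, are quasi-Stein dagger spaces with vanishing higher coherent cohomology; the paper's passage from $Q$ to $\tilde{Q}$ is designed exactly to avoid having to establish this.
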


The proof will occupy us for the rest of the section. By using the Mayer-Vietoris exact sequence, it is easy to see that it suffices to prove the proposition separately for the two cases (a) and (b) above. That is, it suffices to prove that
\begin{equation*}
\text{sp}_{\ast}\omega_{]Y_{J}[_{\hat{Z}_{i_{0}}}^{\dagger}\times\breve{D}^{n}}^{\bullet}\rightarrow\mathbb{R}\text{sp}_{\ast}\omega_{]Y_{J}[_{\hat{Z}_{i_{0}}}^{\dagger}\times\breve{D}^{n}}^{\bullet}
\end{equation*} 
and 
\begin{equation*}
\text{sp}_{\ast}\omega_{Q\times\breve{D}^{n}(m)}^{\bullet}\rightarrow\mathbb{R}\text{sp}_{\ast}\omega_{Q\times\breve{D}^{n}(m)}^{\bullet}
\end{equation*} 
are quasi-isomorphisms. We recall from the proof of Theorem \ref{main comparison} that we have
\begin{equation*}
\omega_{]Y_{J}[_{\hat{Z}_{i_{0}}}^{\dagger}\times\breve{D}^{n}}^{\bullet}=\omega_{]Y_{J}[_{\hat{Z}'_{i_{0}}}^{\dagger}}^{\bullet}\otimes\Omega_{\breve{D}^{n}}^{\bullet}
\end{equation*} 
and 
\begin{equation*}
\omega_{Q\times\breve{D}^{n}(m)}^{\bullet} =\Omega_{]Y_{J}[_{\hat{F}'_{i_{0}}}^{\dagger}}^{\bullet}\otimes\omega_{\breve{D}^{n}(m)}^{\bullet}
\end{equation*} 

The proof for case (a) is easy. Indeed, in Proposition 4.37 and Corollary 4.38 of \cite{DLZ11}, it is not needed that $Q$ is a smooth affinoid dagger space. What is needed is that $\Omega_{Q}^{p}$ is a locally free $\mathcal{O}_{Q}$-module and that $H^{i}(Q,\Omega_{Q}^{p})$ vanishes for $i>0$ (Tate-acyclicity). Both properties hold for the locally free $(\tilde{A}_{i_{0}})_{g}^{\dagger}\otimes\mathbb{Q}$-module $\omega_{]Y_{J}[_{\hat{Z}_{i_{0}}}^{\dagger}}^{p}$ as well, indeed $H^{i}(]Y_{J}[_{\hat{Z}_{i_{0}}}^{\dagger},\omega_{]Y_{J}[_{\hat{Z}_{i_{0}}}^{\dagger}}^{p})=0$ for $i>0$ because $]Y_{J}[_{\hat{Z}_{i_{0}}}^{\dagger}$ is affinoid. Hence we can replace $Q$ by $]Y_{J}[_{\hat{Z}_{i_{0}}}^{\dagger}$ and $\Omega_{Q}^{p}$ by $\omega_{]Y_{J}[_{\hat{Z}_{i_{0}}}^{\dagger}}^{p}$ in the proofs of Proposition 4.37, Corollary 4.38 and Lemmas 4.44$-$4.47 in \cite{DLZ11} to obtain the desired quasi-isomorphism
\begin{equation*}
\mathbb{R}\text{sp}_{\ast}\omega_{]Y_{J}[_{\hat{Z}_{i_{0}}}^{\dagger}\times\breve{D}^{n}}^{\bullet}\cong\text{sp}_{\ast}\omega_{]Y_{J}[_{\hat{Z}_{i_{0}}}^{\dagger}\times\breve{D}^{n}}^{\bullet}
\end{equation*} 

Now we will treat case (b), which is more subtle. Since $Q$ is an open subspace in the smooth affinoid dagger space $\tilde{Q}$, it is enough to show that
\begin{equation*}
\mathbb{R}\text{sp}_{\ast}\omega_{\tilde{Q}\times\breve{D}^{n}(m)}^{\bullet}\cong\text{sp}_{\ast}\omega_{\tilde{Q}\times\breve{D}^{n}(m)}^{\bullet}
\end{equation*}
Note that we have
\begin{equation*}
\omega_{\tilde{Q}\times\breve{D}^{n}(m)}^{\bullet} =\Omega_{\tilde{Q}}^{\bullet}\otimes\omega_{\breve{D}^{n}(m)}^{\bullet}
\end{equation*} 

We have analogues of Lemma 4.45 and Lemma 4.47 of \cite{DLZ11}:
\begin{Lemma}\label{closed}
Let $A$ be a smooth dagger algebra and $Q=\text{Sp }A$ the associated affinoid dagger space, and $D^{n}(m)$ the normal crossings divisor on the closed unit dagger $n$-ball $D^{n}$ associated to 
\begin{equation*}
\text{Sp } K\langle T_{1},\ldots, T_{n}\rangle^{\dagger}/(T_{1}\cdots T_{m})
\end{equation*}
Let
\begin{equation*}
\Lambda_{n}:=(A\otimes_{K}\omega_{D^{n}(m)}^{0}\rightarrow A\otimes_{K}\omega_{D^{n}(m)}^{1}\rightarrow A\otimes_{K}\omega_{D^{n}(m)}^{2}\rightarrow\cdots)
\end{equation*}
be the complex with obvious differential. Let 
\begin{equation*}
d_{t}:=\dim H^{t}(\omega_{K\langle T_{1},\ldots, T_{n}\rangle^{\dagger}/(T_{1}\cdots T_{m})}^{\bullet})
\end{equation*}
be the dimension of the log-Monsky-Washnitzer cohomology of $k[T_{1},\ldots, T_{n}]/(T_{1}\cdots T_{m})$.  Then $\Lambda_{n}$ is quasi-isomorphic to the complex (with zero differentials)
\begin{equation*}
A\xrightarrow{0}A^{d_{1}}\xrightarrow{0}A^{d_{2}}\xrightarrow{0}\cdots
\end{equation*}
\end{Lemma}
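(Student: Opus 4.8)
The plan is to show that the complex $\omega^{\bullet}_{D^{n}(m)}$ of global log-forms on the dagger polydisk decomposes, as a complex of topological $K$-vector spaces, as the direct sum of its cohomology --- a complex of finite-dimensional $K$-vector spaces with zero differentials --- and an acyclic complex carrying a continuous contracting homotopy; applying $A\,\widehat{\otimes}_{K}(-)$ then yields the Lemma. Two inputs on $\omega^{\bullet}_{D^{n}(m)}$ are used. First, $H^{t}(\omega^{\bullet}_{D^{n}(m)})$ is finite-dimensional over $K$: this is exactly what makes the numbers $d_{t}$ well-defined, and follows from the finiteness of log-rigid cohomology together with Corollary \ref{affine comparison}. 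Second, $H^{0}(\omega^{\bullet}_{D^{n}(m)})=K$, i.e. $d_{0}=1$, because the divisor $\{T_{1}\cdots T_{m}=0\}$ is connected and reduced; this is why the degree-zero term of the target complex is written $A$ rather than $A^{d_{0}}$.

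To obtain the splitting I would follow \cite{DLZ11}, Lemmas 4.44--4.47, adapting the explicit dagger-space computations there to the logarithmic setting. The key reduction is the product decomposition of dagger spaces $D^{n}(m)=D^{m}(m)\times D^{n-m}$, with the log structure carried by the first factor, giving at the level of global sections $\omega^{\bullet}_{D^{n}(m)}=\omega^{\bullet}_{D^{m}(m)}\,\widehat{\otimes}_{K}\,\Omega^{\bullet}_{D^{n-m}}$, where $\Omega^{\bullet}_{D^{n-m}}$ is the ordinary de Rham complex of the closed dagger $(n-m)$-ball. The second factor is quasi-isomorphic to $K$ via the explicit continuous homotopy of the dagger Poincar\'{e} lemma already established in \cite{DLZ11}, so that only the pure-log factor $\omega^{\bullet}_{D^{m}(m)}$ remains. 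For that factor one writes down the monomial description of the modules $\bigl(K^{\dagger}\langle T_{1},\ldots,T_{m}\rangle/(T_{1}\cdots T_{m})\bigr)\cdot\bigwedge^{\bullet}(d\log T_{1},\ldots,d\log T_{m})$ and constructs explicit primitives for the differential, classifying monomials by which of the $T_{i}$ occur and respecting the relation $T_{1}\cdots T_{m}=0$, exactly in the style of \cite{DLZ11}, Lemma 4.44. These provide continuous splittings of the surjections onto the coboundaries together with finite-dimensional complements of the cocycles, exhibiting $\omega^{\bullet}_{D^{m}(m)}$, hence $\omega^{\bullet}_{D^{n}(m)}$, as the required topological direct sum.

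Finally, $A\,\widehat{\otimes}_{K}(-)$ is exact on the acyclic summand, the contracting homotopy tensoring up, and carries each $H^{t}(\omega^{\bullet}_{D^{n}(m)})\cong K^{d_{t}}$ to $A^{d_{t}}$ (completed and algebraic tensor products agreeing over the field $K$ on finite-dimensional spaces), so that $\Lambda_{n}\simeq\bigoplus_{t}A^{d_{t}}[-t]$ with zero differentials, which with $d_{0}=1$ is the assertion. I should remark that, read purely algebraically, the statement $H^{t}(\Lambda_{n})=A\otimes_{K}H^{t}(\omega^{\bullet}_{D^{n}(m)})=A^{d_{t}}$ is immediate from flatness of $A$ over the field $K$, and formality over a field is automatic; the explicit homotopy argument is what is needed for the topological reading used in the proof of Proposition \ref{crucial quasi-isomorphism}. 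The main obstacle is precisely this middle step: transporting the homotopy constructions of \cite{DLZ11}, Lemmas 4.44--4.47, to the logarithmic polydisk and verifying strictness of the log-de Rham differentials of $D^{m}(m)$, so that $A\,\widehat{\otimes}_{K}(-)$ commutes with cohomology; the product decomposition is what confines this genuinely new computation to the pure-log factor.
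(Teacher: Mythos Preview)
Your proposal is correct and follows essentially the same route as the paper: the paper states Lemma~\ref{closed} without proof, presenting it explicitly as the logarithmic analogue of \cite{DLZ11}, Lemma~4.45 (and Lemma~\ref{open} as the analogue of Lemma~4.47), so the implicit argument is precisely to transport the homotopy constructions of \cite{DLZ11}, Lemmas~4.44--4.47, to the log polydisk. Your product decomposition $D^{n}(m)=D^{m}(m)\times D^{n-m}$ and reduction to the pure-log factor make explicit exactly how that transport is meant to go, so there is nothing to add.
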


\begin{Lemma}\label{open}
With the same notation as above, let
\begin{equation*}
\breve{\Lambda}_{n}:=(A\otimes_{K}\omega_{\breve{D}^{n}(m)}^{0}\rightarrow A\otimes_{K}\omega_{\breve{D}^{n}(m)}^{1}\rightarrow A\otimes_{K}\omega_{\breve{D}^{n}(m)}^{2}\rightarrow\cdots)
\end{equation*}
be the analogous complex for $\breve{D}^{n}$ and its closed normal crossings divisor $\breve{D}^{n}(m)$. Then $\breve{\Lambda}_{n}$ is quasi-isomorphic to the  complex
\begin{equation*}
A\xrightarrow{0}A^{d_{1}}\xrightarrow{0}A^{d_{2}}\xrightarrow{0}\cdots
\end{equation*}
\end{Lemma}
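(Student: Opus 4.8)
The plan is to reduce the statement to a cohomology computation on the open polydisc and then to match it with the closed-polydisc computation that defines the integers $d_t$. Since $A$ is flat over the field $K$ and the differentials of $\breve\Lambda_n$ are the $A$-linear extensions of the $K$-linear differentials of the complex $\omega^\bullet_{\breve D^n(m)}$ of global sections, we have $H^t(\breve\Lambda_n)=A\otimes_K H^t(\omega^\bullet_{\breve D^n(m)})$. Moreover every complex of $K$-vector spaces is chain-homotopy equivalent to its cohomology equipped with zero differentials, and $A\otimes_K(-)$ preserves homotopy equivalences, so the lemma reduces to the equality $\dim_K H^t(\omega^\bullet_{\breve D^n(m)})=d_t$ for all $t$ — in other words, to the claim that passing from the closed dagger polydisc to the open one does not change the logarithmic de Rham cohomology relative to the normal crossings divisor $T_1\cdots T_m$. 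This is the logarithmic analogue of \cite{DLZ11}, Lemma~4.47, and I would prove it by the same method.

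To carry this out, realize $\breve D^n$ as the increasing union of the closed dagger polydiscs $D^n_\rho$, $\rho\nearrow 1$. The complex $\omega^\bullet_{\breve D^n(m)}$ is the Koszul complex of the commuting operators $T_1\partial_1,\dots,T_m\partial_m,\partial_{m+1},\dots,\partial_n$ acting on the coordinate ring of the divisor $\{T_1\cdots T_m=0\}$, and the essential input is the elementary estimate $|k|_p^{-1}=p^{v_p(k)}\le k$: it ensures that the formal primitive $\sum_k(b_k/k)T^k$ of a power series $\sum_k b_kT^k$ convergent on a disc of radius $\rho$ is again convergent on any disc of radius $\rho'<\rho$, since $k(\rho'/\rho)^k$ is bounded. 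Running the term-by-term integration argument of \cite{DLZ11}, Lemmas~4.44--4.47, with $\partial_T$ replaced by $T\partial_T$ on the logarithmic variables, one obtains that the restriction maps $H^t(\omega^\bullet_{D^n_{\rho'}(m)})\to H^t(\omega^\bullet_{D^n_\rho(m)})$ are isomorphisms for $\rho<\rho'\le 1$ — both sides being spanned by the evident wedges of the classes $d\log T_i$ — and that $H^t(\omega^\bullet_{\breve D^n(m)})=\varprojlim_\rho H^t(\omega^\bullet_{D^n_\rho(m)})$, whence $\dim_K H^t(\omega^\bullet_{\breve D^n(m)})=d_t$ (take $\rho=1$; for $\rho<1$ in $|K^\times|$ rescale to radius one, which preserves the divisor $T_1\cdots T_m=0$). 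The relation $\sum_{i=1}^m d\log T_i=0$ imposed when $\omega^\bullet_{\breve D^n(m)}$ is formed over the base $(K,\mathbb N)$ only changes the Koszul bookkeeping, not the convergence estimate, so it causes no extra difficulty.

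The step I expect to be the main obstacle is exactly the non-quasi-compactness of $\breve D^n$ that enters the last reduction: identifying $H^t(\omega^\bullet_{\breve D^n(m)})$ with $\varprojlim_\rho H^t(\omega^\bullet_{D^n_\rho(m)})$ requires the vanishing of the associated $\varprojlim^1$-term, i.e. a Mittag--Leffler condition on the inverse system of finite-level log de Rham cohomologies. This holds because that system is essentially constant — its transition maps being isomorphisms of $d_t$-dimensional $K$-vector spaces — but verifying it cleanly, together with the topological bookkeeping on the Fréchet spaces of global functions on $\breve D^n$, is precisely the content of Lemmas~4.44--4.47 of \cite{DLZ11}; those arguments apply here unchanged once $\partial_T$ is replaced by $T\partial_T$, which is if anything better behaved, being diagonal on monomials. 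So no genuinely new phenomenon intervenes, and the log case is a faithful transcription of the smooth case treated in \emph{loc.\ cit.}
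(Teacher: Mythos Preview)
Your approach is essentially what the paper has in mind: the paper gives no proof of this lemma at all, merely introducing it (together with Lemma~\ref{closed}) with the sentence ``We have analogues of Lemma~4.45 and Lemma~4.47 of \cite{DLZ11}'' and then immediately using it. Your sketch --- split off the $A$-factor, then run the integration argument of \cite{DLZ11}, Lemmas~4.44--4.47, with $T\partial_T$ replacing $\partial_T$ on the logarithmic coordinates --- is exactly the adaptation the paper is tacitly invoking, and you have correctly identified where the analytic content lies (the convergence estimate for primitives and the Mittag--Leffler condition across radii).

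One small caution: your opening reduction ``$H^t(\breve\Lambda_n)=A\otimes_K H^t(\omega^\bullet_{\breve D^n(m)})$'' treats the tensor product as algebraic, whereas in the dagger/Fr\'echet setting the relevant tensor products are completed, and completed tensor product is not exact. Your parenthetical fix --- pass through a genuine chain-homotopy equivalence rather than a mere quasi-isomorphism --- is the right repair, but it must be a \emph{continuous} homotopy equivalence for $A\,\hat\otimes_K(-)$ to preserve it. This is precisely the topological bookkeeping encoded in \cite{DLZ11}, Lemmas~4.44--4.47 (where the $A$-factor is carried along throughout rather than split off at the start), so in practice it is cleanest to imitate those proofs directly with the $A$ present, rather than to factor it out first. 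Either way, no new idea is needed beyond the substitution $\partial_T\rightsquigarrow T\partial_T$, which as you note is diagonal on monomials and so only simplifies the estimates.
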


We can now follow the  proof of (\cite{DLZ11}, Prop. 4.37). Let $\breve{D}^{n}=\displaystyle\bigcup_{i=1}^{\infty}U_{i}$ be a union of dagger balls of ascending radius, and let $\breve{D}^{n}(m)=\displaystyle\bigcup_{i=1}^{\infty}U_{i}(m)$ be the corresponding normal crossings divisors. For notational brevity, write $\omega^{q}:=\omega_{\tilde{Q}\times U_{i}(m)}^{q}$. Since $\tilde{Q}\times U_{i}(m)$ is affinoid, $H^{p}(\tilde{Q}\times U_{i}(m),\omega^{q})$ vanishes for $p\geq 1$ and $\mathbb{R}\Gamma(\tilde{Q}\times\breve{D}^{n}(m),\omega^{q})$ is quasi-isomorphic to the global sections of the complex
\begin{align*}
& \prod_{i=1}^{\infty}\omega^{q}(\tilde{Q}\times U_{i}(m))\rightarrow\prod_{i=1}^{\infty}\omega^{q}(\tilde{Q}\times U_{i}(m)) \\
& \ \ \ \ \ \ \ \ \  \ \ \prod s_{i}\mapsto\prod (s_{i}-s_{i+1})
\end{align*}
Note that $\omega_{\tilde{Q}\times U_{i}(m)}^{q}=\displaystyle\bigoplus_{\ell}\Omega_{\tilde{Q}}^{\ell}\otimes_{K}\omega_{U_{i}(m)}^{q-\ell}$. Then the complex $H^{0}(\tilde{Q}\times U_{i}(m),\omega_{\tilde{Q}\times U_{i}(m)}^{\bullet})$ is represented by the double complex with components 
\begin{equation*}
C^{p,q}(U_{t}(m))=H^{0}(\tilde{Q},\Omega_{\tilde{Q}}^{p})\otimes_{K}H^{0}(U_{i}(m),\omega^{q})
\end{equation*}
Therefore the morphism of double complexes 
\begin{align*}
 \prod_{i=1}^{\infty}C^{\bullet,\bullet}(U_{i}(m))\rightarrow\prod_{i=1}^{\infty}C^{\bullet,\bullet}(U_{i}(m))
\end{align*}
given on the $(p,q)$-entry by
\begin{align*}
& \prod_{i=1}^{\infty}C^{p,q}(U_{i}(m))\rightarrow\prod_{i=1}^{\infty}C^{p,q}(U_{i}(m)) \\
& \ \ \ \ \ \ \ \ \ \ \ \prod s_{i}\mapsto\prod (s_{i}-s_{i+1})
\end{align*}
induces a map of total complexes with kernel and cokernel $H^{0}(\tilde{Q}\times\breve{D}^{n}(m),\omega_{\tilde{Q}\times \breve{D}^{n}(m)}^{\bullet})$ and $H^{1}(\tilde{Q}\times\breve{D}^{n}(m),\omega_{\tilde{Q}\times\breve{D}^{n}(m)}^{\bullet})$, respectively. It follows from Lemma \ref{closed} that the total complex associated to $C^{\bullet,\bullet}(U_{i}(m))$ is quasi-isomorphic to 
\begin{equation*}
\bigoplus_{t}(H^{0}(\tilde{Q},\Omega_{\tilde{Q}}^{\bullet}))^{d_{t}}
\end{equation*}
with the correction $d_{0}=1$. Analogously, it follows from Lemma \ref{open} that $H^{0}(\tilde{Q}\times\breve{D}^{n}(m),\omega_{\tilde{Q}\times\breve{D}^{n}(m)}^{\bullet})$ is quasi-isomorphic to 
\begin{equation*}
\bigoplus_{t}(H^{0}(\tilde{Q},\Omega_{\tilde{Q}}^{\bullet}))^{d_{t}}=\left(\bigoplus_{t}(H^{0}(\tilde{Q},\Omega_{\tilde{Q}}^{0}))^{d_{t}}\rightarrow\bigoplus_{t}(H^{0}(\tilde{Q},\Omega_{\tilde{Q}}^{1}))^{d_{t}}\rightarrow\cdots\right)
\end{equation*}
Finally, $H^{1}(\tilde{Q}\times\breve{D}^{n}(m),\omega_{\tilde{Q}\times\breve{D}^{n}(m)}^{\bullet})$ is quasi-isomorphic to the total complex of the triple complex 
\begin{equation*}
H^{0}(\tilde{Q}\times\breve{D}^{n}(m),\Omega_{\tilde{Q}}^{\bullet}\otimes\omega_{\breve{D}^{n}(m)}^{\bullet})\rightarrow\prod_{i=1}^{\infty}C^{\bullet,\bullet}(U_{i}(m))\rightarrow\prod_{i=1}^{\infty}C^{\bullet,\bullet}(U_{i}(m))
\end{equation*}
which is quasi-isomorphic to the total complex of the double complex
\begin{align*}
\bigoplus_{t}(H^{0}(\tilde{Q},\Omega_{\tilde{Q}}^{\bullet}))^{d_{t}}\rightarrow\prod_{i=1}^{\infty}\bigoplus_{t}H^{0}(\tilde{Q}, 
& \Omega_{\tilde{Q}}^{\bullet})^{d_{t}}\rightarrow\prod_{i=1}^{\infty}\bigoplus_{t}H^{0}(\tilde{Q},\Omega_{\tilde{Q}}^{\bullet})^{d_{t}} \\
& \prod s_{i}\mapsto\prod (s_{i}-s_{i+1})
\end{align*}
(we note that the direct sums are finite since the $d_{t}=0$ for $t$ greater than twice the dimension).

Since the double complex is acyclic with regard to the horizontal differential, the total complex is acyclic too, and hence  $H^{1}(\tilde{Q}\times\breve{D}^{n}(m),\omega_{\tilde{Q}\times\breve{D}^{n}(m)}^{\bullet})$ is also acyclic. This proves Proposition 4.37 and Corollary 4.38 in \cite{DLZ11} for $\omega_{\tilde{Q}\times\breve{D}^{n}(m)}^{\bullet}$, and hence we conclude that Proposition \ref{crucial quasi-isomorphism} holds.

\section{The monodromy operator}

We follow the argument in \cite{Mok93} but in the more general setting that $Y$ need not be proper.

Let $Y$ be a quasi-projective semistable scheme over $S_{0}$. Define a double complex
\begin{equation*}
\mathcal{B}^{\dagger\bullet,\bullet}:=\frac{W^{\dagger}\tilde{\omega}_{Y/k}^{i+j+1}}{P_{j}W^{\dagger}\tilde{\omega}_{Y/k}^{i+j+1}}
\end{equation*}
with the differential $\mathcal{B}^{\dagger i,j}\rightarrow\mathcal{B}^{\dagger i+1,j}$ given by $(-1)^{j}d$ and the  differential $\mathcal{B}^{\dagger i,j}\rightarrow\mathcal{B}^{\dagger i,j+1}$ given by $\omega\mapsto\omega\wedge\theta$. Let $\mathcal{B}^{\dagger\bullet}$ be the total complex of $\mathcal{B}^{\dagger\bullet,\bullet}$. Then $\mathcal{B}^{\dagger\bullet}\otimes\mathbb{Q}$ is the complex $\mathcal{B}_{\mathbb{Q}}^{\dagger\bullet}$ considered in the proof of Theorem \ref{rational quasi-isomorphisms} in the log-Monsky-Washnitzer setting. Let $\Phi$ denote the map induced by $p^{i+1}F$ on $\mathcal{B}^{\dagger i,j}$. Define also a map $\nu$ by requiring that $(-1)^{i+j+1}\nu:\mathcal{B}^{\dagger i,j}\rightarrow\mathcal{B}^{\dagger i-1,j+1}$ is the projection. This induces a map on $\mathcal{B}^{\dagger\bullet}$, which we also call $\nu$. The same argument as in the proof of Theorem \ref{rational quasi-isomorphisms} shows that the natural map $W^{\dagger}\tilde{\omega}_{Y/k}^{\bullet}\rightarrow\mathcal{B}^{\dagger\bullet}$ factors through $\Theta:W^{\dagger}\omega_{Y/k}^{\bullet}\rightarrow\mathcal{B}^{\dagger\bullet}$, and $\Theta\Phi=\Phi\Theta$. One also has that $\Theta\otimes\mathbb{Q}$ is a quasi-isomorphism. Indeed, this is a local question on $Y$, so we may reduce to the case that $Y$ is a semistable affine scheme over $S_{0}$, and this case was already shown in the proof of Theorem \ref{rational quasi-isomorphisms}.

\begin{Proposition}
The map $\nu:\mathcal{B}^{\dagger\bullet,\bullet}\rightarrow\mathcal{B}^{\dagger\bullet,\bullet}$ induces a nilpotent operator $N$ on 
\begin{equation*}
\mathbb{H}^{\ast}(Y,\mathcal{B}^{\dagger\bullet}_{\mathbb{Q}})\cong\mathbb{H}^{\ast}(Y,W^{\dagger}\omega_{Y/k}^{\bullet}\otimes\mathbb{Q})\cong H_{\emph{\logrig}}^{\ast}(Y/\mathfrak{S}_{0})
\end{equation*}
which coincides with the monodromy operator
\begin{equation*}
N:H_{\emph{\logrig}}^{\ast}(Y/\mathfrak{S}_{0})\rightarrow H_{\emph{\logrig}}^{\ast}(Y/\mathfrak{S}_{0})
\end{equation*}
defined in (\cite{Gro05}, \S5.4).
\end{Proposition}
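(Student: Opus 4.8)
The plan is to run the argument of (\cite{Mok93}, \S3) --- which establishes the analogous statement for log-crystalline cohomology --- entirely inside the overconvergent subcomplexes, exploiting the comparison quasi-isomorphisms already established in Theorem~\ref{rational quasi-isomorphisms} and Theorem~\ref{main comparison}, together with the Steenbrink-type homological lemma of \cite{Ste76}.

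First I would verify that $\nu$ is a morphism of complexes $\mathcal{B}^{\dagger\bullet}_{\mathbb{Q}}\to\mathcal{B}^{\dagger\bullet}_{\mathbb{Q}}$. On $\mathcal{B}^{\dagger i,j}$ the map $(-1)^{i+j+1}\nu$ is nothing but the natural projection $W^{\dagger}\tilde{\omega}_{Y/k}^{i+j+1}/P_{j}\twoheadrightarrow W^{\dagger}\tilde{\omega}_{Y/k}^{i+j+1}/P_{j+1}=\mathcal{B}^{\dagger i-1,j+1}$, and one checks, by the usual Steenbrink sign computation, that it commutes with the total differential (the sign $(-1)^{i+j+1}$ is exactly what this requires); since $W^{\dagger}\tilde{\omega}_{Y/k}^{\bullet}$ is a subcomplex of $W\tilde{\omega}_{Y/k}^{\bullet}$ closed under $d$ and $\wedge\theta$, this computation is word-for-word the one in (\cite{Mok93}, \S3). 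Hence $\nu$ descends to an operator $N$ on $\mathbb{H}^{\ast}(Y,\mathcal{B}_{\mathbb{Q}}^{\dagger\bullet})$. For nilpotency, note that $\mathcal{B}^{\dagger i,j}=0$ for $j>\dim Y$ --- the weight filtration $P_{\bullet}$ exhausts $W^{\dagger}\tilde{\omega}_{Y/k}^{m}$ as soon as the index passes the number of local branches, which is at most $\dim Y+1$ --- so, as $\nu$ raises $j$ by one, it is already nilpotent on the double complex $\mathcal{B}^{\dagger\bullet,\bullet}$, and a fortiori $N$ is nilpotent on cohomology.

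The content is the comparison with Grosse-Kl\"{o}nne's operator. Recall that, up to the identifications of \S\ref{log-rigid section}--\S\ref{logmw section}, the monodromy operator of (\cite{Gro05}, \S5.4) on $H^{\ast}_{\logrig}(Y/\mathfrak{S}_{0})\cong H^{\ast}_{\logmw}(Y/K)$ is the connecting homomorphism of the short exact sequence (\ref{ses MW}) --- equivalently, by the homological lemma of \cite{Ste76} relating the boundary map of such a sequence to the $\nu$-operator on the associated Steenbrink double complex, it is the operator induced by the analogue of $\nu$ on the total complex $\mathcal{A}_{\mathbb{Q}}^{\dagger\bullet}$ from the proof of Theorem~\ref{rational quasi-isomorphisms}. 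I would now apply the same homological lemma on the overconvergent side: under the quasi-isomorphism $\Theta\otimes\mathbb{Q}\colon W^{\dagger}\omega_{Y/k}^{\bullet}\otimes\mathbb{Q}\xrightarrow{\sim}\mathcal{B}_{\mathbb{Q}}^{\dagger\bullet}$, the operator $N$ induced by $\nu$ corresponds to the connecting homomorphism of (\ref{ses DRW}). Since $W^{\dagger}\tilde{\omega}_{Y/k}^{\bullet}\subset W\tilde{\omega}_{Y/k}^{\bullet}$ compatibly with all the structure maps, the verification of this lemma is again identical to (\cite{Mok93}, \S3) with each complex replaced by its overconvergent subcomplex.

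It then remains to identify the connecting homomorphism of (\ref{ses DRW}) with that of (\ref{ses MW}). In the affine case this is immediate: the morphism of short exact sequences from (\ref{ses MW}) to (\ref{ses DRW}) constructed in \S\ref{logmw section} has vertical arrows that are quasi-isomorphisms after $\otimes\mathbb{Q}$ by Theorem~\ref{rational quasi-isomorphisms}, and a morphism of short exact sequences of complexes induces a commuting square of boundary maps. For general quasi-projective $Y$ one globalises along an open affine cover $Y=\bigcup_{i\in I}Y_{i}$ exactly as in the proof of Theorem~\ref{main comparison}: all the complexes, short exact sequences and operators above are functorial for the transition maps $Y_{J'}\hookrightarrow Y_{J}$, hence assemble into a morphism of short exact sequences of simplicial complexes on $Y_{\bullet}$, and Grosse-Kl\"{o}nne's $N$ is itself defined by the same cover-and-descend recipe. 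Chaining the three identifications then proves the proposition. I expect the only genuinely fiddly point to be keeping the Steenbrink sign conventions consistent across $\mathcal{A}_{\mathbb{Q}}^{\dagger\bullet,\bullet}$, $\mathcal{B}_{\mathbb{Q}}^{\dagger\bullet,\bullet}$ and the simplicial gluing; but because the comparison morphism (\ref{comparison tilde}) strictly preserves the weight filtrations (as shown in the proof of Theorem~\ref{rational quasi-isomorphisms}), it induces an honest map of double complexes intertwining the two copies of $\nu$, so the signs take care of themselves.
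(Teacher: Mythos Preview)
Your proposal is correct and follows essentially the same route as the paper. The paper makes your invocation of the ``Steenbrink homological lemma'' concrete by introducing an explicit auxiliary double complex $\mathcal{C}^{\dagger i,j}:=\mathcal{B}^{\dagger i-1,j}\oplus\mathcal{B}^{\dagger i,j}$ sitting in a short exact sequence $0\to\mathcal{B}^{\dagger\bullet}[-1]\to\mathcal{C}^{\dagger\bullet}\to\mathcal{B}^{\dagger\bullet}\to 0$ receiving a map $\Psi$ from $W^{\dagger}\tilde{\omega}_{Y/k}^{\bullet}$, so that the identification of $N$ with the connecting homomorphism of (\ref{ses DRW}) becomes a five-lemma argument; and in the globalisation step the paper is more explicit than your ``exactly as in the proof of Theorem~\ref{main comparison}'', spelling out that one must construct the middle arrow $\mathbb{R}\text{sp}_{\ast}\tilde{\omega}_{M_{\bullet}}^{\bullet}\to W^{\dagger}\tilde{\omega}_{Y_{\bullet}/k}^{\bullet}\otimes\mathbb{Q}$ and observe that Proposition~\ref{crucial quasi-isomorphism} holds verbatim for the tilde complexes --- a point your proposal absorbs into the phrase ``functorial for the transition maps'' but which does require comment, since the log-Monsky--Washnitzer complexes (\ref{ses MW}) depend on lifts and are globalised only via the dagger-space machinery.
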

\begin{proof}
Let us define another double complex by
\begin{equation*}
\mathcal{C}^{\dagger i,j}:=\mathcal{B}^{\dagger, i-1,j}\oplus\mathcal{B}^{\dagger i,j}
\end{equation*}
for $i,j\geq 0$, with the differential $\mathcal{C}^{\dagger i,j}\rightarrow\mathcal{C}^{\dagger i+1,j}$ given by 
\begin{equation*}
(\omega_{1},\omega_{2})\mapsto((-1)^{j}d\omega_{1},(-1)^{j}d\omega_{2})
\end{equation*}
and the differential $\mathcal{C}^{\dagger i,j}\rightarrow\mathcal{C}^{\dagger i,j+1}$ given by 
\begin{equation*}
(\omega_{1},\omega_{2})\mapsto(\omega_{1}\wedge\theta+\nu\omega_{2},\omega_{2}\wedge\theta)
\end{equation*}
Let $\mathcal{C}^{\dagger\bullet}$ be the total complex of $\mathcal{C}^{\dagger\bullet,\bullet}$. Then we get a natural morphism 
\begin{equation*}
\Psi:W^{\dagger}\tilde{\omega}_{Y/k}^{\bullet}\rightarrow\mathcal{C}^{\dagger\bullet}
\end{equation*}
fitting into the following diagram of short exact sequences
\begin{equation*}
\begin{tikzpicture}[descr/.style={fill=white,inner sep=1.5pt}]
        \matrix (m) [
            matrix of math nodes,
            row sep=2.5em,
            column sep=2.5em,
            text height=1.5ex, text depth=0.25ex
        ]
        {0 & W^{\dagger}\omega_{Y/k}^{\bullet}[-1] & W^{\dagger}\tilde{\omega}_{Y/k}^{\bullet} & W^{\dagger}\omega_{Y/k}^{\bullet} & 0 \\
        0 & \mathcal{B}^{\dagger\bullet}[-1] & \mathcal{C}^{\dagger\bullet} & \mathcal{B}^{\dagger\bullet} & 0 \\
        };

        \path[overlay,->, font=\scriptsize]
       (m-1-1) edge (m-1-2)
       (m-1-2) edge node [above]{$-\wedge\theta$} (m-1-3)
       (m-1-3) edge (m-1-4)
       (m-1-4) edge (m-1-5)
       (m-2-1) edge (m-2-2)
       (m-2-2) edge node [above]{$-\wedge\theta$} (m-2-3)
       (m-2-3) edge (m-2-4)
       (m-2-4) edge (m-2-5)
       (m-1-2) edge node [right]{$\Theta[-1]$} (m-2-2)
       (m-1-3) edge node [right]{$\Psi$} (m-2-3)
       (m-1-4) edge node [right]{$\Theta$} (m-2-4);

       \end{tikzpicture} 
\end{equation*} 
Tensoring by $\mathbb{Q}$ gives the diagram
\begin{equation*}
\begin{tikzpicture}[descr/.style={fill=white,inner sep=1.5pt}]
        \matrix (m) [
            matrix of math nodes,
            row sep=2.5em,
            column sep=2.5em,
            text height=1.5ex, text depth=0.25ex
        ]
        {0 & W^{\dagger}\omega_{Y/k}^{\bullet}\otimes\mathbb{Q}[-1] & W^{\dagger}\tilde{\omega}_{Y/k}^{\bullet}\otimes\mathbb{Q} & W^{\dagger}\omega_{Y/k}^{\bullet}\otimes\mathbb{Q} & 0 \\
        0 & \mathcal{B}_{\mathbb{Q}}^{\dagger\bullet}[-1] & \mathcal{C}^{\dagger\bullet}\otimes\mathbb{Q} & \mathcal{B}_{\mathbb{Q}}^{\dagger\bullet} & 0 \\
        };

        \path[overlay,->, font=\scriptsize]
       (m-1-1) edge (m-1-2)
       (m-1-2) edge node [above]{$-\wedge\theta$} (m-1-3)
       (m-1-3) edge (m-1-4)
       (m-1-4) edge (m-1-5)
       (m-2-1) edge (m-2-2)
       (m-2-2) edge node [above]{$-\wedge\theta$} (m-2-3)
       (m-2-3) edge (m-2-4)
       (m-2-4) edge (m-2-5)
       (m-1-2) edge node [right]{$\Theta\otimes\mathbb{Q}[-1]$} node [left]{$\wr$} (m-2-2)
       (m-1-3) edge node [right]{$\Psi\otimes\mathbb{Q}$} (m-2-3)
       (m-1-4) edge node [right]{$\Theta\otimes\mathbb{Q}$} node [left]{$\wr$} (m-2-4);

       \end{tikzpicture} 
\end{equation*} 
where the outermost vertical arrows are quasi-isomorphisms by the local argument given in the proof of Theorem \ref{rational quasi-isomorphisms}, and hence we conclude that $\Psi\otimes\mathbb{Q}$ is also a quasi-isomorphism. By construction, this shows that the map 
\begin{equation*}
N:\mathbb{H}^{\ast}(Y,W^{\dagger}\omega_{Y/k}^{\bullet}\otimes\mathbb{Q})\rightarrow\mathbb{H}^{\ast}(Y,W^{\dagger}\omega_{Y/k}^{\bullet}\otimes\mathbb{Q})
\end{equation*}
induced by $\nu:\mathcal{B}^{\dagger\bullet,\bullet}\rightarrow\mathcal{B}^{\dagger\bullet,\bullet}$ is exactly the connecting homomorphism on cohomology associated to the top short exact sequence. 

It therefore suffices to prove that the connecting homomorphism gives the monodromy operator on $H_{\logrig}^{\ast}(Y/\mathfrak{S}_{0})$.
Let $Y_{\bullet}$ be the simplicial scheme and $M_{\bullet}:=]Y_{\bullet}[_{\hat{D}_{\bullet}}^{\dagger}$ the simplicial dagger space as constructed in the proof of Theorem \ref{main comparison}. Then we have a diagram of short exact sequences of complexes of simplicial sheaves
\begin{equation*}
\begin{tikzpicture}[descr/.style={fill=white,inner sep=1.5pt}]
        \matrix (m) [
            matrix of math nodes,
            row sep=2.5em,
            column sep=1.5em,
            text height=1.5ex, text depth=0.25ex
        ]
        {
        0 & \mathbb{R}\text{sp}_{\ast}\omega_{M_{\bullet}}^{\bullet}[-1] & \mathbb{R}\text{sp}_{\ast}\tilde{\omega}_{M_{\bullet}}^{\bullet} & \mathbb{R}\text{sp}_{\ast}\omega_{M_{\bullet}}^{\bullet} & 0 \\
        0 & W^{\dagger}\omega_{Y_{\bullet}/k}^{\bullet}\otimes\mathbb{Q}[-1] & W^{\dagger}\tilde{\omega}_{Y_{\bullet}/k}^{\bullet}\otimes\mathbb{Q} & W^{\dagger}\omega_{Y_{\bullet}/k}^{\bullet}\otimes\mathbb{Q} & 0 \\
        };

        \path[overlay,->, font=\scriptsize]
       (m-1-1) edge (m-1-2)
       (m-1-2) edge node [above]{$-\wedge\theta$} (m-1-3)
       (m-1-3) edge (m-1-4)
       (m-1-4) edge (m-1-5)
       (m-2-1) edge (m-2-2)
       (m-2-2) edge node [above]{$-\wedge\theta$} (m-2-3)
       (m-2-3) edge (m-2-4)
       (m-2-4) edge (m-2-5);
       
       \draw[shorten <=0.2cm,->, font=\scriptsize]
       (m-1-2) edge node [left]{$\wr$} (m-2-2)
       (m-1-3) edge (m-2-3)
       (m-1-4) edge node [left]{$\wr$} (m-2-4);

       \end{tikzpicture} 
\end{equation*} 
where the outermost vertical arrows are the quasi-isomorphisms (\ref{simplicial}), and the middle arrow is constructed as follows:

We rewrite the morphism (\ref{map 1}) in terms of dagger spaces for each $J$
\begin{equation*}
\Gamma(M_{J},\tilde{\omega}_{M_{J}}^{\bullet})\rightarrow W^{\dagger}\tilde{\omega}_{Y_{J}/k}^{\bullet}\otimes\mathbb{Q}
\end{equation*}
Then applying the argument after (\cite{DLZ11}, 4.32) gives a local version
\begin{equation*}
\text{sp}_{\ast}\tilde{\omega}_{M_{J}}^{\bullet}\rightarrow W^{\dagger}\tilde{\omega}_{Y_{J}/k}^{\bullet}\otimes\mathbb{Q}
\end{equation*}
The proof of Proposition \ref{crucial quasi-isomorphism} holds verbatim with $\text{sp}_{\ast}\omega_{M_{J}}^{\bullet}$ replaced by $\text{sp}_{\ast}\tilde{\omega}_{M_{J}}^{\bullet}$ to show that the canonical morphism $\text{sp}_{\ast}\tilde{\omega}_{M_{J}}^{\bullet}\rightarrow\mathbb{R}\text{sp}_{\ast}\tilde{\omega}_{M_{J}}^{\bullet}$ is a quasi-isomorphism. This then defines the middle arrow in the diagram, which is therefore a quasi-isomorphism. The monodromy operator on the log-rigid cohomology of $Y$ is, by definition, the connecting homomorphism on cohomology associated to the top short exact sequence.   

\section{Comparison with log-crystalline cohomology in the projective case}\label{projective section}

We prove a semistable analogue of a comparison, obtained for smooth projective varieties in \cite{LZ15} between overconvergent and usual de Rham-Witt cohomology, for Hyodo-Kato cohomology:

\begin{Theorem}\label{projective comparison}
Let $Y$ be a projective semistable scheme over $S_{0}$. Then the canonical map
\begin{equation*}
\mathbb{H}^{\ast}(Y,W^{\dagger}\omega_{Y/k}^{\bullet})\rightarrow \mathbb{H}^{\ast}(Y,W\omega_{Y/k}^{\bullet})
\end{equation*}
is an isomorphism of $W(k)$-modules of finite type.
\end{Theorem}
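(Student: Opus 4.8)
The plan is to reduce to the smooth projective comparison \cite{LZ15} by running its argument one stratum at a time, using the weight filtration of Mokrane \cite{Mok93} to organise the reduction. Recall that the Hyodo--Kato complex $W\omega_{Y/k}^{\bullet}$ carries a bounded increasing filtration $P_{\bullet}$ by subcomplexes (the weight filtration), compatible with $d$, whose graded pieces are identified via the Poincar\'{e} residue maps with finite direct sums of shifted de Rham--Witt complexes $(a_{m})_{\ast}W\Omega_{Y^{(m)}/k}^{\bullet}$, where $Y^{(m)}=\coprod_{|I|=m}Y_{I}$ is the disjoint union of the $m$-fold intersections of the irreducible components of $Y$ and $a_{m}\colon Y^{(m)}\hookrightarrow Y$ is the (finite) structure map; since $Y$ is projective semistable, each $Y^{(m)}$ is a disjoint union of smooth projective $k$-varieties. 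First I would put on $W^{\dagger}\omega_{Y/k}^{\bullet}$ the induced filtration $P_{k}W^{\dagger}\omega_{Y/k}^{\bullet}:=W^{\dagger}\omega_{Y/k}^{\bullet}\cap P_{k}W\omega_{Y/k}^{\bullet}$ (intersection inside $W\omega_{Y/k}^{\bullet}$), and verify that the residue isomorphisms restrict to isomorphisms
\[
\mathrm{Gr}^{P}_{k}W^{\dagger}\omega_{Y/k}^{\bullet}\;\xrightarrow{\ \sim\ }\;\bigoplus_{m}(a_{m})_{\ast}W^{\dagger}\Omega_{Y^{(m)}/k}^{\bullet}[\,\mathrm{shift}\,]
\]
onto the corresponding \emph{overconvergent} strata complexes. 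This is formally the same computation that identifies $\mathrm{Gr}_{j}W^{\dagger}\tilde{\omega}_{Y/k}^{\bullet}$ in the proof of Theorem \ref{rational quasi-isomorphisms}, and uses only that taking residues along a component and the Gysin pushforwards are compatible with the overconvergence (growth) condition on log Witt differentials of \cite{DLZ11} and \cite{Mat17}; the identifications of Proposition \ref{old-modern} make this bookkeeping legitimate.

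Granting this first step, the canonical map $W^{\dagger}\omega_{Y/k}^{\bullet}\to W\omega_{Y/k}^{\bullet}$ is filtered for $P_{\bullet}$ and induces on the $k$-th graded piece the direct sum of the canonical maps $W^{\dagger}\Omega_{Y^{(m)}/k}^{\bullet}\to W\Omega_{Y^{(m)}/k}^{\bullet}$. Because each $Y^{(m)}$ is smooth and projective, \cite{LZ15} gives that $\mathbb{H}^{\ast}(Y^{(m)},W^{\dagger}\Omega_{Y^{(m)}/k}^{\bullet})\to\mathbb{H}^{\ast}(Y^{(m)},W\Omega_{Y^{(m)}/k}^{\bullet})$ is an isomorphism of finite type $W(k)$-modules; pushing forward along the finite maps $a_{m}$ and taking hypercohomology, it follows that
\[
\mathbb{H}^{\ast}(Y,\mathrm{Gr}^{P}_{k}W^{\dagger}\omega_{Y/k}^{\bullet})\;\longrightarrow\;\mathbb{H}^{\ast}(Y,\mathrm{Gr}^{P}_{k}W\omega_{Y/k}^{\bullet})
\]
is an isomorphism of finite type $W(k)$-modules for every $k$. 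I would then induct on the (finitely many) steps of the filtration: the short exact sequences of complexes $0\to P_{k-1}\to P_{k}\to\mathrm{Gr}^{P}_{k}\to 0$ for $W^{\dagger}\omega_{Y/k}^{\bullet}$ and for $W\omega_{Y/k}^{\bullet}$ give, via the comparison map, a morphism of the associated long exact hypercohomology sequences; the five lemma, together with the fact that an extension of finite type $W(k)$-modules is of finite type, propagates the isomorphism and the finiteness from $\mathrm{Gr}^{P}_{k}$ and $P_{k-1}$ up to $P_{k}$, starting from the bottom step (where $P_{k}=\mathrm{Gr}^{P}_{k}$) and ending at $P_{k}W\omega_{Y/k}^{\bullet}=W\omega_{Y/k}^{\bullet}$ for $k\gg 0$. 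This proves simultaneously that $\mathbb{H}^{\ast}(Y,W^{\dagger}\omega_{Y/k}^{\bullet})$ is of finite type over $W(k)$ and that the canonical map to $\mathbb{H}^{\ast}(Y,W\omega_{Y/k}^{\bullet})=H^{\ast}_{\logcris}(Y/W(k))$ is an isomorphism.

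The hard part will be exactly the first step: checking that Mokrane's weight filtration is compatible with overconvergence, i.e.\ that the Poincar\'{e} residue maps carry $\mathrm{Gr}^{P}_{k}W^{\dagger}\omega_{Y/k}^{\bullet}$ \emph{onto} (not merely into) the overconvergent strata complexes. Both inclusions have to be established by hand from the explicit description of (log) Witt differentials and their convergence radii as in \cite{DLZ11} and \cite{Mat17} --- the residue of an overconvergent log Witt differential is overconvergent, and conversely every overconvergent Witt differential on a stratum $Y^{(m)}$ is the residue of an overconvergent class on $Y$. A related preliminary point to pin down is that Mokrane's weight filtration is genuinely available on the integral Hyodo--Kato complex $W\omega_{Y/k}^{\bullet}$, and not only on the rational Steenbrink double complex $\mathcal{B}_{\mathbb{Q}}^{\dagger\bullet}$ used in Theorem \ref{rational quasi-isomorphisms}; this is also the reason the argument must be run on $W\omega_{Y/k}^{\bullet}$ directly rather than on $W\tilde{\omega}_{Y/k}^{\bullet}$, whose weight filtration has a bottom graded piece supported on the singular scheme $Y$ itself, to which the smooth projective comparison does not apply.
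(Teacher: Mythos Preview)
Your approach is genuinely different from the paper's, and the obstacle you flag at the end is not a ``preliminary point'' but the heart of the matter. In Mokrane's work the weight filtration with Poincar\'{e} residue isomorphisms onto smooth strata lives on $W\tilde{\omega}_{Y/k}^{\bullet}$, and the passage to $W\omega_{Y/k}^{\bullet}$ goes through the Steenbrink--Mokrane double complex via the map $\Theta\colon W\omega_{Y/k}^{\bullet}\to WA^{\bullet}$. This paper itself only claims that $\Theta\otimes\mathbb{Q}$ is a quasi-isomorphism (see \S6 and the proof of Theorem~\ref{rational quasi-isomorphisms}), and the standard Steenbrink acyclicity argument that proves it uses division by $\theta$, which is not obviously integral in the de Rham--Witt setting. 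So your filtration on $W\omega_{Y/k}^{\bullet}$ with graded pieces $\bigoplus_{m}(a_{m})_{\ast}W\Omega_{Y^{(m)}/k}^{\bullet}[\text{shift}]$ is, as far as the literature goes, only a rational statement; running your five-lemma induction on it would recover the rational comparison, which is already a consequence of Theorem~\ref{main comparison} together with the proper log-rigid/log-crystalline comparison, but not the integral isomorphism the theorem asserts. Retreating to $W\tilde{\omega}_{Y/k}^{\bullet}$ does not help either, since $\mathrm{Gr}_{0}^{P}W\tilde{\omega}_{Y/k}^{\bullet}$ is (the image of) $W\Omega_{Y/k}^{\bullet}$ for the \emph{singular} scheme $Y$, and \cite{LZ15} gives you nothing there.

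The paper avoids the weight filtration entirely and instead transports the method of \cite{LZ15} directly to the semistable setting: it proves a local, integral statement $W^{\dagger}\omega_{Y/k}^{\bullet}/p^{n}\simeq W\omega_{Y/k}^{\bullet}/p^{n}$ by reducing (via Kedlaya's finite \'{e}tale covering trick) to $A=k[T_{1},\dots,T_{d}]/(T_{1}\cdots T_{r})$, where one has an explicit integral/fractional decomposition of $W^{\dagger}\omega_{A/k}^{\bullet}$ in terms of basic log Witt differentials (using Matsuue's description and Proposition~\ref{old-modern}); the fractional part is acyclic and the integral part agrees with the log Monsky--Washnitzer complex, so mod~$p^{n}$ the overconvergent and full complexes coincide. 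Finiteness in the proper case then lets one pass to the limit. The upshot is that the integral content of the theorem is captured by a hands-on local computation rather than by a global filtration argument; if you want to rescue your strategy you would need to supply an honest \emph{integral} quasi-isomorphism $W\omega_{Y/k}^{\bullet}\simeq WA^{\bullet}$ (or an integral weight filtration on $W\omega_{Y/k}^{\bullet}$ with smooth graded pieces), and that is not in Mokrane.
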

For the assumption of (quasi-)projectivity see the remark below Theorem \ref{intro theorem}.

First we need a lemma:

\begin{Lemma}
Under the assumptions of Theorem \ref{projective comparison}, there is a commutative diagram
\begin{equation*}
\begin{tikzpicture}[descr/.style={fill=white,inner sep=1.5pt}]
        \matrix (m) [
            matrix of math nodes,
            row sep=2.5em,
            column sep=2.5em,
            text height=1.5ex, text depth=0.25ex
        ]
        {
        \mathbb{H}^{\ast}(Y,W^{\dagger}\omega_{Y/k}^{\bullet}) & \mathbb{H}^{\ast}(Y,W\omega_{Y/k}^{\bullet}) \\
        \mathbb{H}^{\ast}(Y,W^{\dagger}\omega_{Y/k}^{\bullet}\otimes\mathbb{Q}) & \mathbb{H}^{\ast}(Y,W\omega_{Y/k}^{\bullet}\otimes\mathbb{Q}) \\
        H_{\emph{\logrig}}^{\ast}(Y/\mathfrak{S}_{0}) & H_{\emph{\logcris}}^{\ast}((Y,M)/(W(k),W(L)))\otimes\mathbb{Q} \\
        };

        \path[overlay,->, font=\scriptsize]
       (m-1-1) edge (m-1-2)
       (m-2-1) edge (m-2-2)
       (m-3-1) edge node [above] {$\sim$} (m-3-2)
       (m-1-1) edge  (m-2-1)
       (m-1-2) edge  (m-2-2)
       (m-3-1) edge node [left] {$\wr$} (m-2-1)
       (m-3-2) edge node [left] {$\wr$} (m-2-2);
       \end{tikzpicture} 
\end{equation*}
Here $M$ is the log structure on $Y$ given by $\mathcal{O}_{Y}\cap u_{\ast}\mathcal{O}_{U}^{\times}$ where $u:U\hookrightarrow Y$ is a smooth dense open, and $W(L)$ is the canonical lifting of the log structure $L$ on $\spec k$ given by $1\mapsto 0$ (previously denoted by $S_{0}$). 
\end{Lemma}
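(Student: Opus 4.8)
The plan is to exhibit the square as the rationalization of a commuting cube of complexes, one face of which is the overconvergent construction already carried out in the proof of Theorem~\ref{main comparison} and the opposite face of which is its $p$-adic (non-overconvergent) counterpart. First I would pin down the four arrows of the bottom square. The left-hand vertical isomorphism is Theorem~\ref{main comparison}, $H_{\logrig}^{\ast}(Y/\mathfrak{S}_{0})\cong\mathbb{H}^{\ast}(Y,W^{\dagger}\omega_{Y/k}^{\bullet}\otimes\mathbb{Q})$. The right-hand vertical isomorphism is the Hyodo--Kato comparison theorem of \cite{HK94}, $\mathbb{H}^{\ast}(Y,W\omega_{Y/k}^{\bullet})\cong H_{\logcris}^{\ast}((Y,M)/(W(k),W(L)))$, tensored with $\mathbb{Q}$. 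Since $Y$ is projective, its irreducible components are proper, so Grosse-Kl\"{o}nne's comparison between log-rigid and log-convergent cohomology (\cite{Gro05}) applies, and composing with Shiho's comparison between log-convergent and log-crystalline cohomology (\cite{Shi02}) gives the bottom horizontal isomorphism. Finally the top horizontal arrow is the canonical map induced by the inclusion $W^{\dagger}\omega_{Y/k}^{\bullet}\subset W\omega_{Y/k}^{\bullet}$, the two upper vertical arrows are the natural maps $\mathbb{H}^{\ast}(\mathcal{F}^{\bullet})\to\mathbb{H}^{\ast}(\mathcal{F}^{\bullet}\otimes\mathbb{Q})$, and the upper square commutes by functoriality of $(-)\otimes\mathbb{Q}$ applied to that inclusion.

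For the lower square I would run the construction of Theorem~\ref{main comparison} and its non-overconvergent analogue side by side. On the overconvergent side one has the simplicial dagger space $M_{\bullet}=\,]Y_{\bullet}[_{\hat{D}_{\bullet}}^{\dagger}$ and the quasi-isomorphism (\ref{simplicial}), $\mathbb{R}\text{sp}_{\ast}\omega_{M_{\bullet}}^{\bullet}\xrightarrow{\sim}W^{\dagger}\omega_{Y_{\bullet}/k}^{\bullet}\otimes\mathbb{Q}$, whose hypercohomology is the left vertical arrow. Replacing each overconvergent tube $M_{J}$ by the corresponding rigid-analytic full tube $M_{J}^{\rig}$ in a proper formal compactification (as in the proof of Lemma~\ref{rigid dagger comparison}, but with $j^{\dagger}$ omitted), the log de Rham complex $\omega_{M_{\bullet}^{\rig}}^{\bullet}$ computes $H_{\logcris}^{\ast}(Y/W(k))\otimes\mathbb{Q}$ by the Grosse-Kl\"{o}nne and Shiho comparisons, while the construction of the comparison morphism in \S\ref{complexes} and \S\ref{logmw section} --- now using the Lazard maps out of the $p$-adic completions rather than the weak completions, and the quotient presentation (\ref{Hyodo-style quotient}) of Hyodo and Mokrane without the overconvergence cut-off --- yields a quasi-isomorphism $\mathbb{R}\text{sp}_{\ast}\omega_{M_{\bullet}^{\rig}}^{\bullet}\xrightarrow{\sim}W\omega_{Y_{\bullet}/k}^{\bullet}\otimes\mathbb{Q}$ which on hypercohomology recovers the right vertical arrow. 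The inclusions $\omega_{M_{J}}^{\bullet}\hookrightarrow\omega_{M_{J}^{\rig}}^{\bullet}$ and $W^{\dagger}\omega_{Y_{J}/k}^{\bullet}\hookrightarrow W\omega_{Y_{J}/k}^{\bullet}$ are intertwined by these two quasi-isomorphisms --- both being induced, on each overconvergent semistable frame, by the single inclusion of weakly completed rings into their $p$-adic completions and of overconvergent Witt vectors into all Witt vectors --- and they glue over the simplicial index; applying $\mathbb{R}\Gamma(Y,-)$ to the resulting commuting square of derived pushforwards gives the lower square of the lemma.

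I expect the main obstacle to be the bookkeeping in the previous paragraph: one must verify that Grosse-Kl\"{o}nne's identification of log-rigid with log-convergent cohomology and Shiho's identification of log-convergent with log-crystalline cohomology are realized by exactly the rigid log de Rham resolution $\omega_{M_{\bullet}^{\rig}}^{\bullet}$ used above, and that the resulting non-overconvergent quasi-isomorphism to $W\omega_{Y_{\bullet}/k}^{\bullet}\otimes\mathbb{Q}$ coincides with the rationalized Hyodo--Kato isomorphism. This is the convergent shadow of Theorem~\ref{rational quasi-isomorphisms} and Proposition~\ref{isomorphisms and independence}, and it goes through because every ingredient of those proofs --- the Steenbrink weight filtration and its Poincar\'{e}-residue graded pieces, the strong fibration theorem reductions, the ``divide by $\theta$'' short exact sequences --- has a literal analogue with ``dagger'' and ``$W^{\dagger}$'' replaced throughout by ``rigid'' and ``$W$'', indeed the arguments of Mokrane and Steenbrink are originally in precisely that non-overconvergent setting. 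Beyond this parallel transport of the earlier constructions, only routine diagram chasing is required.
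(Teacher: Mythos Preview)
Your strategy is plausible and would eventually succeed, but it is considerably more elaborate than the paper's argument, and the very ``bookkeeping obstacle'' you flag is exactly what the paper avoids.

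The paper observes that every map in the lower square is already defined when $Y$ is merely quasi-projective (the left vertical by Theorem~\ref{main comparison}, the right vertical by \cite{HK94}, and the bottom horizontal as the logarithmic analogue of Berthelot's rigid--crystalline comparison \cite{Ber97}). One may therefore apply Mayer--Vietoris to reduce to the affine case. There the bottom horizontal map is independent of the choice of embedding, so one can take the log-Monsky--Washnitzer frame of \S\ref{logmw section}; the map $H_{\logrig}^{\ast}\to H_{\logcris}^{\ast}\otimes\mathbb{Q}$ is then induced by the morphism of complexes $\omega_{\tilde{Y}^{\dagger}}^{\bullet}\to\omega_{\hat{\tilde{Y}}}^{\bullet}$, i.e.\ by $p$-adic completion of the log-MW complex. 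Since the comparison morphisms (\ref{comparison tilde}) and (\ref{comparison}) to the (overconvergent) Hyodo--Kato complexes are visibly compatible with passing from weak completion to $p$-adic completion, the square commutes.

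By contrast, you propose to build a global simplicial rigid-analytic resolution $\omega_{M_{\bullet}^{\rig}}^{\bullet}$ parallel to the dagger one and to match it, on the nose, with the Grosse-Kl\"{o}nne/Shiho chain and with the Hyodo--Kato isomorphism. This is not wrong, but it forces you to re-prove convergent analogues of Proposition~\ref{isomorphisms and independence} and Proposition~\ref{crucial quasi-isomorphism} and then to identify the resulting map with the one in \cite{Shi02}; the paper's reduction to the affine log-MW picture collapses all of that to a single observation about $p$-adic completion.
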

\begin{proof}
We need to show that the lower square commutes. The isomorphism on the left and right are the comparisons between log-rigid and overconvergent Hyodo-Kato, resp. between log-crystalline and Hyodo-Kato cohomology (\cite{HK94}, 4.19). These isomorphisms also hold if $Y$ is only quasi-projective. The lower horizontal isomorphism is the logarithmic analogue of a comparison between rigid and crystalline cohomology defined in \cite{Ber97} in the proof of Theorem 1.9. If there exists a global semistable frame the analogy with Berthelot's proof is clear, otherwise one has to proceed by simplicial methods. Using Grosse-Kl\"{o}nne's definition of log-rigid cohomology as the cohomology of simplicial dagger spaces (\cite{Gro05}, 1.5) one obtains a canonical map, by using $p$-adic formal schemes and rigid spaces instead of weak formal schemes and dagger spaces, to Shiho's analytic cohomology which is isomorphic to log-convergent cohomology by Shiho's Log Convergent Poincar\'{e} Lemma (\cite{Shi02}, Corollary 2.3.9). Using the isomorphism between log-convergent and log-crystalline cohomology (\cite{Shi02}, Theorem 3.1.1) one obtains the lower horizontal arrow for any semistable $Y$, not necessarily proper. If $Y$ is proper, then log-rigid cohomology is isomorphic to analytic resp. log-convergent cohomology by (\cite{Gro05}, Theorem 5.3) and hence the lower horizontal arrow is an isomorphism for $Y$ proper semistable.

Hence all maps in the lower square are defined for quasi-projective varieties as well. Using the Mayer-Vietoris sequence for cohomology, we may assume that $Y$ is affine. Since the lower horizontal map in the diagram is independent from the choice of embeddings into log-smooth (weak) formal schemes, we may assume that $H^{\ast}_{\logrig}(Y/\mathfrak{S}_{0})$ is given by logarithmic Monsky-Washnitzer cohomology $H_{\logmw}^{\ast}(Y/K)$. In this case the map is given by a morphism of complexes
\begin{equation*}
\omega_{\tilde{Y}^{\dagger}}^{\bullet}\rightarrow\omega_{\hat{\tilde{Y}}}^{\bullet}
\end{equation*}
i.e. by taking $p$-adic completion of the logarithmic Monsky-Washnitzer complex. The comparison maps to the overconvergent and usual Hyodo-Kato complexes evidently commute with taking $p$-adic completions. This proves the lemma.
\end{proof}

Next we show the analogue of (\cite{LZ15}, 2.2).

\begin{Proposition}\label{reducing mod p^n}
Under the assumptions of Theorem \ref{projective comparison}, we have quasi-isomorphisms
\begin{equation*}
W^{\dagger}\omega_{Y/k}^{\bullet}/p^{n}\cong W\omega_{Y/k}^{\bullet}/p^{n}\cong W_{n}\omega_{Y/k}^{\bullet}
\end{equation*}
for all $n\in\mathbb{N}$.
\end{Proposition}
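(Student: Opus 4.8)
The plan is to prove this exactly as its smooth counterpart (\cite{LZ15}, 2.2), by splitting it into the two separate assertions
\begin{equation*}
(\mathrm{i})\ W^{\dagger}\omega_{Y/k}^{\bullet}/p^{n}\cong W\omega_{Y/k}^{\bullet}/p^{n},\qquad(\mathrm{ii})\ W\omega_{Y/k}^{\bullet}/p^{n}\cong W_{n}\omega_{Y/k}^{\bullet},
\end{equation*}
and treating each in turn. Both statements are local on $Y$, so I would assume throughout that $Y=\spec k[T_{1},\ldots,T_{n}]/(T_{1}\cdots T_{d})$.

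For (ii), which is a statement about the Hyodo--Kato complex alone, I would adapt Illusie's proof of the analogous fact for the classical de Rham--Witt complex. By (\cite{HK94}, \S4.9), equivalently (via Proposition \ref{old-modern}) by (\cite{Mat17}, \S3.5 and Prop.\ 3.6), the restriction map identifies $W_{n}\omega_{Y/k}^{\bullet}=W\omega_{Y/k}^{\bullet}/\fil^{n}W\omega_{Y/k}^{\bullet}$, where $\fil^{n}W\omega_{Y/k}^{i}=V^{n}W\omega_{Y/k}^{i}+dV^{n}W\omega_{Y/k}^{i-1}$ is a differential graded ideal. Since $FV=VF=p$ one has $p^{n}=V^{n}F^{n}$ on $W\omega_{Y/k}^{\bullet}$, whence $p^{n}W\omega_{Y/k}^{\bullet}\subseteq V^{n}W\omega_{Y/k}^{\bullet}\subseteq\fil^{n}W\omega_{Y/k}^{\bullet}$, and the restriction factors through a surjection of complexes $W\omega_{Y/k}^{\bullet}/p^{n}\twoheadrightarrow W_{n}\omega_{Y/k}^{\bullet}$ whose kernel is the complex $\fil^{n}W\omega_{Y/k}^{\bullet}/p^{n}W\omega_{Y/k}^{\bullet}$. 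It then remains to show that this kernel is acyclic, for which I would carry out the same computation as in the smooth case: from the explicit description of the successive quotients $\fil^{s}/\fil^{s+1}$ in terms of the logarithmic de Rham complex (through $V^{s}$ and $dV^{s}$), together with the relations $F^{s}dV^{s}=d$ and $F^{s}V^{s}=p^{s}$, one reads off a contracting homotopy. All the structure needed for this — the filtration, the operators $F$, $V$, $d$, the classes $d\log[f_{i}]$, and the relations among them — is available in the logarithmic setting by \cite{HK94} and \cite{Mat17}.

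For (i) I would argue termwise: it suffices to show that for each $i$ the inclusion $W^{\dagger}\omega_{Y/k}^{i}\subset W\omega_{Y/k}^{i}$ induces an isomorphism of sheaves $W^{\dagger}\omega_{Y/k}^{i}/p^{n}\xrightarrow{\sim}W\omega_{Y/k}^{i}/p^{n}$, that is, $W^{\dagger}\omega_{Y/k}^{i}+p^{n}W\omega_{Y/k}^{i}=W\omega_{Y/k}^{i}$ and $W^{\dagger}\omega_{Y/k}^{i}\cap p^{n}W\omega_{Y/k}^{i}=p^{n}W^{\dagger}\omega_{Y/k}^{i}$. The first is the $p$-adic density of $W^{\dagger}\omega_{Y/k}^{i}$ in $W\omega_{Y/k}^{i}$ and the second is its $p$-saturatedness; both reduce to the corresponding properties of overconvergent Witt vectors and overconvergent log Witt differentials — that modulo $p^{n}$ only finitely many Witt components are relevant, and that finite sums of basic log Witt differentials are automatically overconvergent — established in the smooth case in (\cite{DLZ11}, \S3) and \cite{DLZ12}, and in the present logarithmic setting in (\cite{Mat17}, \S10); these are exactly the inputs used in (\cite{LZ15}, 2.2). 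Combining (i) and (ii) then yields the two quasi-isomorphisms of the Proposition. I expect the one genuinely non-formal point to be the acyclicity of $\fil^{n}W\omega_{Y/k}^{\bullet}/p^{n}W\omega_{Y/k}^{\bullet}$ in (ii): it is the logarithmic analogue of a computation of Illusie, and although it is not deep, carrying it out requires keeping careful track of the $F$-, $V$- and $d\log$-contributions to the graded pieces $\fil^{s}/\fil^{s+1}$, and is the step that needs the most care.
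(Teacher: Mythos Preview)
Your treatment of (ii) is fine, if more elaborate than necessary: the paper simply quotes (\cite{HK94}, Cor.\ 4.5) for the quasi-isomorphism $W\omega_{Y/k}^{\bullet}/p^{n}\simeq W_{n}\omega_{Y/k}^{\bullet}$.

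The problem is your treatment of (i). The termwise isomorphism $W^{\dagger}\omega_{Y/k}^{i}/p^{n}\cong W\omega_{Y/k}^{i}/p^{n}$ that you are aiming for is \emph{false}. Your justification --- ``modulo $p^{n}$ only finitely many Witt components are relevant'' --- conflates $W(A)/p^{n}W(A)$ with $W_{n}(A)=W(A)/V^{n}W(A)$. For an imperfect ring such as $A=k[T]$ one has $p^{n}W(A)=V^{n}F^{n}W(A)\subsetneq V^{n}W(A)$, so $W(A)/p^{n}$ still sees infinitely many Witt components. Concretely, in the basic-Witt-differential decomposition one checks that multiplication by $p$ fixes each weight $k$ and acts on the coefficient $\xi\in W(k)$ by $\xi\mapsto p\xi$; hence the class of $\sum_{u\ge 0}V^{u}[T^{a_{u}}]$ in $W(A)/p$, for $a_{u}$ coprime to $p$ with $a_{u}/p^{u}\to\infty$ fast enough, is represented only by elements whose weight-$k_{u}$ coefficient has $p$-valuation zero at \emph{every} $k_{u}=a_{u}/p^{u}$, and this violates the overconvergence bound $u(k)+v_{p}(\xi_{k})\ge\epsilon k-C$ for any $\epsilon>0$. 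So $W^{\dagger}(A)\to W(A)/p$ is not surjective already in degree $0$. This is also \emph{not} what (\cite{LZ15}, 2.2) does, despite your opening sentence: there too the statement is a quasi-isomorphism, obtained via the integral/fractional decomposition, not a termwise identity.

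The paper's argument for (i) proceeds exactly along those lines. One reduces (via Kedlaya's covering trick and \'etale base change for the overconvergent Hyodo--Kato complex) to $A=k[T_{1},\dots,T_{d}]/(T_{1}\cdots T_{r})$, and then proves a decomposition
\[
W^{\dagger}\omega_{A/k}^{\bullet}=W^{\dagger}\omega_{A/k}^{\text{int }\bullet}\oplus W^{\dagger}\omega_{A/k}^{\text{frac }\bullet},\qquad W\omega_{A/k}^{\bullet}=W\omega_{A/k}^{\text{int }\bullet}\oplus W\omega_{A/k}^{\text{frac }\bullet},
\]
using basic log Witt differentials as in \cite{Mat17}. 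The fractional parts are acyclic and, because the sheaves are $p$-torsion-free (\cite{HK94}, Cor.\ 4.5), remain acyclic after $\otimes\,\mathbb{Z}/p^{n}$; the integral parts are identified with $\omega_{\tilde{A}^{\dagger}}^{\bullet}$ and $\omega_{\hat{\tilde{A}}}^{\bullet}$ respectively, which obviously agree modulo $p^{n}$. This yields the desired quasi-isomorphism. So the missing idea in your proposal is precisely this integral/fractional splitting; without it, the $p$-adic density step cannot be salvaged.
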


Only the first quasi-isomorphism requires a proof, the second quasi-isomorphism follows from (\cite{HK94}, Cor. 4.5).

This is a Zariski-local question, so we may assume that $Y$ is affine. Moreover, by a result of Kedlaya (\cite{Ked05}, Thm. 2), we may assume that $Y=\spec B$ is finite \'{e}tale and free over $\spec A=\spec k[T_{1},\ldots, T_{d}]/(T_{1}\cdots T_{r})$ for some $r$.

We note that (\cite{LZ15}, Prop. 2.3) is based on (\cite{DLZ12}, Cor. 2.46) and does not need that $A$ is a smooth $k$-algebra, hence we conlude that $W^{\dagger}(B)$ is a finite \'{e}tale $W^{\dagger}(A)$-algebra and free as a $W^{\dagger}(A)$-module.

The proof of (\cite{DLZ11}, Prop. 1.9) transfers verbatim to the Hyodo-Kato complexes and extends the \'{e}tale base change  for the Hyodo-Kato complexes in (\cite{Mat17}, Prop. 3.7) to the overconvergent setting, hence we have
\begin{equation*}
W^{\dagger}\omega_{A/k}^{\ell}\otimes_{W^{\dagger}(A)}W^{\dagger}(B)\xrightarrow{\sim}W^{\dagger}\omega_{B/k}^{\ell}
\end{equation*}

Let $\kappa_{A}:\tilde{A}^{\dagger}=W(k)\langle T_{1},\ldots, T_{d}\rangle^{\dagger}/(T_{1}\cdots T_{r})\rightarrow W^{\dagger}(A)$ be the canonical map obtained by sending $T_{i}$ to $[T_{i}]$ for $i=1,\ldots, d$. Note that $[T_{1}\cdots T_{r}]=[T_{1}]\cdots[T_{r}]$ is zero in $W(A)$, hence $\kappa_{A}$ is well-defined. By reproducing the argument before (\cite{LZ15}, Prop. 2.5), we conclude that the above map extends uniquely to 
\begin{equation*}
\kappa_{B}:\tilde{B}^{\dagger}\rightarrow W^{\dagger}(B)
\end{equation*}
(note that this map is used to construct the comparison morphisms (\ref{comparison tilde}) and (\ref{comparison})). Then we have

\begin{Proposition}
Let $B$ be finite \'{e}tale and free over $A=k[T_{1},\ldots, T_{d}]/(T_{1}\cdots T_{r})$. Then there is a decomposition of $W^{\dagger}\omega_{B/k}^{\bullet}$ into subcomplexes
\begin{equation*}
W^{\dagger}\omega_{B/k}^{\bullet}=W^{\dagger}\omega_{B/k}^{\text{int }\bullet}\oplus W^{\dagger}\omega_{B/k}^{\text{frac }\bullet}
\end{equation*}
where $W^{\dagger}\omega_{B/k}^{\text{frac }\bullet}$ is acyclic and $W^{\dagger}\omega_{B/k}^{\text{int }\bullet}$ is isomorphic to $\omega_{\tilde{B}^{\dagger}}^{\bullet}$ via the morphism induced by
\begin{equation*}
\kappa_{B}:\omega_{\tilde{B}^{\dagger}}^{\bullet}\rightarrow W^{\dagger}\omega_{B/k}^{\bullet}
\end{equation*}
\end{Proposition}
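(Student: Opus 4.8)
The plan is to reduce to the ``basic'' case $B=A=k[T_{1},\ldots,T_{d}]/(T_{1}\cdots T_{r})$ by \'{e}tale descent, and then to establish the decomposition there by transporting the weight-decomposition of the overconvergent de Rham--Witt complex of affine space from \cite{DLZ11} --- in the packaged form in which it is used in \cite{LZ15} --- to the logarithmic setting, using Matsuue's basic log Witt differentials in place of the basic Witt differentials of \cite{DLZ11}.

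First, the reduction. Suppose the proposition holds for $A$, say $W^{\dagger}\omega_{A/k}^{\bullet}=W^{\dagger}\omega_{A/k}^{\text{int }\bullet}\oplus W^{\dagger}\omega_{A/k}^{\text{frac }\bullet}$ with $W^{\dagger}\omega_{A/k}^{\text{frac }\bullet}$ acyclic and $\kappa_{A}\colon\omega_{\tilde{A}^{\dagger}}^{\bullet}\xrightarrow{\sim}W^{\dagger}\omega_{A/k}^{\text{int }\bullet}$. As recorded above, $W^{\dagger}(B)$ is finite \'{e}tale and free over $W^{\dagger}(A)$, one has $W^{\dagger}\omega_{A/k}^{\ell}\otimes_{W^{\dagger}(A)}W^{\dagger}(B)\xrightarrow{\sim}W^{\dagger}\omega_{B/k}^{\ell}$ for all $\ell$ and compatibly with $d$, and $\kappa_{B}$ is the unique extension of $\kappa_{A}$ to $\tilde{B}^{\dagger}$; moreover the square relating $\tilde{A}^{\dagger}$, $\tilde{B}^{\dagger}$, $W^{\dagger}(A)$, $W^{\dagger}(B)$ is \'{e}tale-cocartesian, and the analogous \'{e}tale base change $\omega_{\tilde{A}^{\dagger}}^{\bullet}\otimes_{\tilde{A}^{\dagger}}\tilde{B}^{\dagger}\xrightarrow{\sim}\omega_{\tilde{B}^{\dagger}}^{\bullet}$ holds for the weakly completed log de Rham complex. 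Applying $-\otimes_{W^{\dagger}(A)}W^{\dagger}(B)$ to the decomposition for $A$ therefore yields a decomposition of $W^{\dagger}\omega_{B/k}^{\bullet}$ into subcomplexes whose ``integral'' summand is identified with $\omega_{\tilde{B}^{\dagger}}^{\bullet}$ via $\kappa_{B}$ and whose ``fractional'' summand remains acyclic because $W^{\dagger}(B)$ is flat over $W^{\dagger}(A)$. So it suffices to treat $B=A$.

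For the basic case, use Proposition~\ref{old-modern} to compute $W^{\dagger}\omega_{A/k}^{\bullet}$ as $W^{\dagger}\Lambda_{Y/S_{0}}^{\bullet}$ in Matsuue's formalism: every element of $W\Lambda_{Y/S_{0}}^{\bullet}$ is a convergent sum of images of basic log Witt differentials $e(\xi,\mathbf{k},P)$ --- indexed by a weight function $\mathbf{k}\colon\{1,\ldots,d\}\to\mathbb{Z}[1/p]_{\ge 0}$, an auxiliary partition datum $P$ and a Witt-vector coefficient $\xi$ (\cite{Mat17}, Prop.~4.3) --- subject to the relation $\sum_{i\le r}d\log[T_{i}]=0$ coming from the projection to the base $(k,\mathbb{N})$, and $W^{\dagger}\omega_{A/k}^{\bullet}$ is the subcomplex cut out by the overconvergence condition of (\cite{Mat17}, \S10.1). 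Declare $e(\xi,\mathbf{k},P)$ to be \emph{integral} if $\mathbf{k}$ is $\mathbb{Z}$-valued (with $P$ of the matching type) and \emph{fractional} otherwise, exactly as in \cite{DLZ11}, and let $W^{\dagger}\omega_{A/k}^{\text{int }\bullet}$ and $W^{\dagger}\omega_{A/k}^{\text{frac }\bullet}$ be the closed spans inside $W^{\dagger}\omega_{A/k}^{\bullet}$ of the integral, resp.\ fractional, basic differentials (the overconvergence condition respects the weight decomposition, so this splitting is already defined on $W^{\dagger}\omega_{A/k}^{\bullet}$). Since the logarithmic Leibniz formulae for $d$ on basic log Witt differentials preserve the weight function, they preserve integrality; hence both pieces are subcomplexes and $W^{\dagger}\omega_{A/k}^{\bullet}$ is their direct sum as complexes. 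For acyclicity of $W^{\dagger}\omega_{A/k}^{\text{frac }\bullet}$ one follows \cite{DLZ11} and \cite{LZ15}: in a fractional weight there is a distinguished index at which $\mathbf{k}$ is non-integral, and ``partial integration'' in that variable provides an explicit integral, overconvergence-preserving contracting homotopy which organises the fractional basic differentials into short acyclic subcomplexes; one checks that this homotopy only moves a non-logarithmic fractional direction, so it is unaffected by the classes $d\log[T_{i}]$ and descends through the projection to $(k,\mathbb{N})$. For the integral part, $\kappa_{A}$ carries a monomial $T^{\mathbf{a}}$ (with $\mathbf{a}\in\mathbb{Z}_{\ge0}^{d}$) times a wedge of $d\log T_{i}$'s ($i\le r$) and $dT_{j}$'s ($j>r$) to the corresponding Teichm\"{u}ller basic differential of integral weight $\mathbf{a}$, and these exhaust the integral basic differentials; the computations of (\cite{DLZ11}, proof of Prop.~1.9) and (\cite{LZ15}, around Prop.~2.5) carry over to the log setting and show that $\kappa_{A}$ is injective with image $W^{\dagger}\omega_{A/k}^{\text{int }\bullet}$, that the weak-completion condition defining $\omega_{\tilde{A}^{\dagger}}^{\bullet}$ matches the overconvergence condition on the de Rham--Witt side, and that the relation $\theta=d\log\tilde{f}_{1}+\cdots+d\log\tilde{f}_{r}$ on the left corresponds to $\sum_{i\le r}d\log[T_{i}]$ on the right. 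As $d\kappa_{A}=\kappa_{A}d$, this is an isomorphism of complexes, completing the basic case.

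I expect the main obstacle to be entirely contained in the basic case: transporting the weight-decomposition machinery of \cite{DLZ11} and \cite{LZ15} through Matsuue's logarithmic formalism and through the quotient by $\theta$. Concretely, one must verify that the combinatorics of basic log Witt differentials is compatible with the integral/fractional dichotomy and with the action of $d$, and --- the delicate point --- that the contracting homotopy witnessing acyclicity of the fractional part is integral, overconvergence-preserving, and unchanged under the projection $W^{\dagger}\Lambda_{Y/(k,\{\ast\})}^{\bullet}\to W^{\dagger}\Lambda_{Y/S_{0}}^{\bullet}$. Once this is in place, the \'{e}tale-descent step is purely formal.
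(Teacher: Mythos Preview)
Your proposal is correct and follows essentially the same route as the paper: reduce to $B=A$ by \'{e}tale base change (exactly as in \cite{LZ15}, Prop.~2.5), then use the weight decomposition of basic log Witt differentials into integral and fractional pieces, with acyclicity of the fractional part inherited from the polynomial case and the integral part matched with $\omega_{\tilde{A}^{\dagger}}^{\bullet}$ via $\kappa_{A}$. The one point of difference is the ``delicate point'' you flag --- how the decomposition survives the quotient by $\theta$: rather than descend the contracting homotopy through the projection, the paper uses the short exact sequence $0\to W_{m}\Lambda^{\bullet-1}\xrightarrow{\wedge\theta_{m}}W_{m}\tilde{\Lambda}^{\bullet}\to W_{m}\Lambda^{\bullet}\to 0$ from (\cite{Mat17}, Lemma~7.4) to write down explicitly which basic differentials survive in $W_{m}\Lambda^{\bullet}$, and then observes that the integral/fractional split is visibly still a direct sum decomposition of complexes in that description.
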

\begin{proof}
It is enough to treat the case $A=B$; the argument for this is the same as in the proof of (\cite{LZ15}, Prop. 2.5). Indeed, 
\begin{equation*}
W^{\dagger}\omega_{B/k}^{\ell}\cong W^{\dagger}\omega_{A/k}^{\ell}\otimes_{W^{\dagger}(A)}W^{\dagger}(B)
\end{equation*}
by \'{e}tale base change. Let $b_{1},\ldots, b_{m}$ be an $A$-module basis of $B$ and lift these to an $\tilde{A}^{\dagger}$-module basis $u_{1},\ldots, u_{m}$ of $\tilde{B}^{\dagger}$. Then $\kappa_{B}(u_{1}),\ldots,\kappa_{B}(u_{m})$ is a $W^{\dagger}(A)$-module basis of $W^{\dagger}(B)$. Therefore
\begin{equation*}
W^{\dagger}\omega_{B/k}^{\ell}\cong W^{\dagger}\omega_{A/k}^{\ell}\otimes_{\tilde{A}^{\dagger}}\tilde{B}^{\dagger}
\end{equation*}
If $W^{\dagger}\omega_{A/k}^{\bullet}$ decomposes as in the statement of the proposition, then we obtain a decomposition 
\begin{equation*}
W^{\dagger}\omega_{B/k}^{\bullet}=W^{\dagger}\omega_{B/k}^{\text{int }\bullet}\oplus W^{\dagger}\omega_{B/k}^{\text{frac }\bullet}
\end{equation*}
by tensoring the corresponding decomposition for $A$ with $\tilde{B}^{\dagger}$. The same proof as (\cite{DLZ11}, Theorem 3.19) shows that if $W^{\dagger}\omega_{A/k}^{\text{frac }\bullet}$ is acyclic then $W^{\dagger}\omega_{B/k}^{\text{frac }\bullet}$ is acyclic.
It therefore suffices to prove the proposition for the case $A=B$. For this we use the description of the de Rham-Witt complex of a (Laurent-) polynomial algebra given in (\cite{BMS16}, \S10.4):

For a $\mathbb{Z}_{(p)}$-algebra $R$, any element $\omega$ in $W_{n}\Omega_{R[T_{1}^{\pm 1},\ldots, T_{d}^{\pm 1}]/R}^{\ell}$ can be uniquely written as a finite sum
\begin{equation}\label{basic Witt}
\omega=\sum_{k,\mathcal{P}}e(\xi_{k,\mathcal{P}},k,\mathcal{P}_{\leq \rho})\prod_{j=\rho+1}^{\ell}d\log(\prod_{i\in I_{j}}[T_{i}])
\end{equation}
where $k$ ranges over the weight functions $k:[1,d]\rightarrow\mathbb{Z}[\frac{1}{p}]\cup\{\infty\}$ satisfying properties (i), (ii), (iii) in (\cite{BMS16},\S10.4), and $\mathcal{P}=\{I_{0},I_{1},\ldots,I_{\rho},I_{\rho+1},\ldots I_{\ell}\}$ is a disjoint partition of $I=\text{supp }k$, such that $\mathcal{P}_{\leq\rho}=\{I_{0},I_{1},\ldots, I_{\rho}\}$ and $\rho$ is the integer denoted by $\rho_{2}$ in (\cite{BMS16},\S10.4), $I_{0}$ is possibly empty and $e(\xi_{k,\mathcal{P}},k,\mathcal{P}_{\leq\rho})$ is a basic Witt differential of type Case 1, Case 2, Case 3 given in (\cite{LZ04}, 2.15-2.17) (but where the exponents of the $T_{i}$ for $i$ occurring in $I_{j}$ for $0\leq j\leq\rho$ can be negative).

Consider now the log-scheme $\specno(A,\mathbb{N}^{r})$ where $\mathbb{N}^{r}\ni e_{i}\mapsto T_{i}$, $1\leq i\leq r$, over the trivial base $\specno(k,\ast)$. Then the complex $W\Lambda_{(A,\mathbb{N}^{r})/(k,\ast)}^{\bullet}$, defined in \cite{Mat17}, can be described as follows (our description differs from the description in \cite{Mat17} but is equivalent): any $\omega$ in $W\Lambda_{(A,\mathbb{N}^{r})/(k,\ast)}^{\ell}$ has a unique expression as a convergent sum
\begin{equation}\label{convergent sum}
\omega=\sum_{k,\mathcal{P}}e(\xi_{k,\mathcal{P}},k,\mathcal{P}_{\leq \rho})\prod_{j=\rho+1}^{\ell}d\log(\prod_{i\in I_{j}}[T_{i}])
\end{equation}
as in (\ref{basic Witt}), where for any given $m$ we have
\par
\ \ \ - $\xi_{k,\mathcal{P}}\in V^{m}W(k)=p^{m}W(k)$ for all but finitely many $k$.
\par
\ \ \ - All weight functions take non-negative values, i.e. on $I_{0}\cup\cdots\cup I_{\rho}$ they\par \ \ \ \ \ take values in $\mathbb{Z}_{\geq 0}[\frac{1}{p}]$.
\par
\ \ \ - $[1,r]\not\subset I_{j}$ for any $j=0,\ldots,\rho$, and for any $i$ occuring in $I_{j}$ for
\par \ \ \ \ \ $j=\rho+1,\ldots,\ell$ we have $i\in[1,\ldots,r]$.
\\
It is clear from this description that we get a decomposition
\begin{equation*}
W\Lambda_{(A,\mathbb{N}^{r})/(k,\ast)}^{\bullet}=W\Lambda_{(A,\mathbb{N}^{r})/(k,\ast)}^{\text{int }\bullet}\oplus W\Lambda_{(A,\mathbb{N}^{r})/(k,\ast)}^{\text{frac }\bullet}
\end{equation*}
given by integral and purely fractional weights, and that the fractional part is acyclic, as in the case of (Laurent-) polynomial algebras (\cite{BMS16}, Theorem 10.13).

Now we apply (\cite{Mat17},\S7.2). Let $W_{m}\tilde{\Lambda}^{\bullet}:=W_{m}\Lambda_{(A,\mathbb{N}^{r})/(k,\ast)}^{\bullet}$ and $W_{m}\Lambda^{\bullet}:=W_{m}\Lambda_{(A,\mathbb{N}^{r})/(k,\mathbb{N})}^{\bullet}=W_{m}\Lambda_{(A,\mathbb{N}^{r})/S_{0}}^{\bullet}$, which is isomorphic to the Hyodo-Kato complex $W_{m}\omega_{Y/k}^{\bullet}$ by the proof of Proposition \ref{old-modern}. Then we have a short exact sequence (\cite{Mat17}, Lemma 7.4)
\begin{equation*}
0\longrightarrow W_{m}\Lambda^{\bullet-1}\xrightarrow{\wedge\theta_{m}}W_{m}\tilde{\Lambda}^{\bullet}\longrightarrow W_{m}\Lambda^{\bullet}\longrightarrow 0
\end{equation*}
where $\theta_{m}:=d\log[T_{1}]+\cdots+d\log[T_{r}]$. This implies that any element $\omega$ in $W_{m}\Lambda^{\ell}$ can be written uniquely as a sum
\begin{equation}\label{unique expression}
\omega=\sum_{k,\mathcal{P}}e(\xi_{k,\mathcal{P}},k,\mathcal{P}_{\leq \rho})\prod_{j=\rho+1}^{\ell}d\log(\prod_{i\in I_{j}}[T_{i}])
\end{equation}
with the following properties:
\par
\ \ \ - $[1,r]\not\subset I_{j}$ for any $j=0,\ldots,\ell$; $\rho$ is equal to $\rho_{2}$ in (\cite{BMS16},\S10.4).
\par
\ \ \ - For all $j=\rho+1,\ldots,\ell$ we have $I_{j}\subset\{1,\ldots,r\}$.
\par
\ \ \ - $e(\xi_{k,\mathcal{P}},k,\mathcal{P}_{\leq\rho})$ as before.
\\
From the definitions it is clear that we again have a decomposition
\begin{equation*}
W_{m}\Lambda^{\bullet}=W_{m}\Lambda^{\text{int }\bullet}\oplus W_{m}\Lambda^{\text{frac }\bullet}
\end{equation*}
and the fractional part is acyclic. Passing to the projective limit and overconvergent subcomplexes, we obtain decompositions
\begin{equation*}
W\Lambda^{\bullet}=W\Lambda^{\text{int }\bullet}\oplus W\Lambda^{\text{frac }\bullet}
\end{equation*}
and
\begin{equation*}
W^{\dagger}\Lambda^{\bullet}=W^{\dagger}\Lambda^{\text{int }\bullet}\oplus W^{\dagger}\Lambda^{\text{frac }\bullet}
\end{equation*}
and the fractional parts are acyclic subcomplexes (the acyclicity is inherited from the case of polynomial algebras). Hence we have the desired decomposition
\begin{equation*}
W^{\dagger}\omega_{Y/k}^{\bullet}=W^{\dagger}\omega_{Y/k}^{\text{int }\bullet}\oplus W^{\dagger}\omega_{Y/k}^{\text{frac }\bullet}
\end{equation*}
in the case that $Y=\spec k[T_{1},\ldots, T_{d}]/(T_{1}\cdots T_{r})$. It is evident that $W^{\dagger}\omega_{Y/k}^{\text{int }\bullet}$ is  isomorphic to $\omega_{\tilde{A}^{\dagger}}^{\bullet}$.
\end{proof}

Since the $W^{\dagger}\omega_{Y/k}^{\ell}$ and $W\omega_{Y/k}^{\ell}$ are $p$-torsion free (\cite{HK94}, Cor. 4.5), we conclude that $W^{\dagger}\omega_{Y/k}^{\text{frac }\bullet}\otimes\mathbb{Z}/p^{n}$ is acyclic too. It is clear that $\omega_{\tilde{B}^{\dagger}}^{\bullet}\otimes\mathbb{Z}/p^{n}$ is isomorphic to $\omega_{\hat{\tilde{B}}}^{\bullet}\otimes\mathbb{Z}/p^{n}$. This concludes the proof of Proposition \ref{reducing mod p^n}.

Finally, since 
\begin{equation*}
\varprojlim_{n}\mathbb{H}^{i}(Y,W^{\dagger}\omega_{Y/k}^{\bullet}/p^{n})=\varprojlim_{n}\mathbb{H}^{i}(Y,W_{n}\omega_{Y/k}^{\bullet})=\mathbb{H}^{i}(Y,W\omega_{Y/k}^{\bullet})
\end{equation*}
where the last equality holds because all $\mathbb{H}^{i}(Y,W_{n}\omega_{Y/k}^{\bullet})$ are of finite length if $Y$ is proper (\cite{HK94}, \S3.2) and $\mathbb{H}^{i}(Y,W\omega_{Y/k}^{\bullet})$ are $W(k)$-modules of finite type, we can apply the arguments in (\cite{LZ15}, page 1392) to conclude that
\begin{equation*}
\mathbb{H}^{\ast}(Y,W^{\dagger}\omega_{Y/k}^{\bullet})\cong\mathbb{H}^{\ast}(Y,W\omega_{Y/k}^{\bullet})
\end{equation*}
This proves Theorem \ref{projective comparison}.

\end{proof}

\
\\
\
\\
\noindent 
Oli Gregory \\
Technische Universit\"{a}t M\"{u}nchen \\
Zentrum Mathematik - M11 \\
Boltzmannstra{\ss}e 3 \\
85748 Garching bei M\"{u}nchen, Germany \\
email: oli.gregory@tum.de
\
\\
\
\\
Andreas Langer\\
University of Exeter\\
Mathematics\\
Exeter EX4 4QF\\
Devon, UK\\
email: a.langer@exeter.ac.uk

\end{document}